\numberwithin{equation}{section}
\theoremstyle{plain}
\newtheorem{theorem}{Theorem}[section]
\newtheorem{lemma}[theorem]{Lemma}
\newtheorem{corollary}[theorem]{Corollary}
\newtheorem{proposition}[theorem]{Proposition}
\theoremstyle{definition}
\newtheorem{definition}[theorem]{Definition}
\newtheorem{assumption}[theorem]{Assumption}
\theoremstyle{remark}
\newtheorem{remark}[theorem]{Remark}
\newtheorem{notation}{Notation}[section]
\def\dashint{\operatorname%
{\,\,\text{\bf--}\kern-.98em\DOTSI\intop\ilimits@\!\!}}
\def\bR{\mathbb{R}}
\def\cA{\mathcal{A}}
\def\cK{\mathcal{K}}
\def\cL{\mathcal{L}}
\def\cG{\mathcal{G}}
\def\cP{\mathcal{P}}
\def\cF{\mathcal{F}}
\def\cH{\mathcal{H}}
\begin{document}
\title[Stokes system]{Green functions for pressure of Stokes systems}

\author[J. Choi]{Jongkeun Choi}
\address[J. Choi]{School of Mathematics, Korea Institute for Advanced Study, 85 Hoegiro, Dongdaemun-gu, Seoul 02455, Republic of Korea}
\email{jkchoi@kias.re.kr}

\thanks{}

\author[H. Dong]{Hongjie Dong}
\address[H. Dong]{Division of Applied Mathematics, Brown University, 182 George Street, Providence, RI 02912, USA}

\email{Hongjie\_Dong@brown.edu}

\thanks{H. Dong was partially supported by the NSF under agreement DMS-1600593.}

\subjclass[2010]{76D07, 35R05, 35J08}
\keywords{Green function, Stokes system, Dini mean oscillation condition}

\begin{abstract}
We study Green functions for the pressure of stationary Stokes systems in a (possibly unbounded) domain $\Omega\subset \mathbb{R}^d$, where $d\ge 2$.
We construct the Green function when coefficients are merely measurable in one direction and have Dini mean oscillation in the other directions, and $\Omega$ is such that the divergence equation is solvable there.
We also establish global pointwise bounds for the Green function and its derivatives when coefficients have Dini mean oscillation and $\Omega$ has a $C^{1,\rm{Dini}}$ boundary.
Green functions for the flow velocity of Stokes systems are also considered.
\end{abstract}

\maketitle

\section{Introduction}			\label{S1}

We study Green functions and fundamental solutions for stationary Stokes systems with variable coefficients.
Let $\cL$ be a second order elliptic operator in divergence form
$$
\cL u=D_\alpha (A^{\alpha\beta}D_\beta u)
$$
acting on column vector valued functions $u=(u_1,\ldots,u_d)^\top$ defined on a domain $\Omega\subset \bR^d$,
where $d\ge 2$.
Unlike elliptic systems, Stokes systems have
two types of Green functions.
One is a pair $(G, \Pi)=(G(x,y), \Pi(x,y))$, which we call  {\em{Green function for the flow velocity}}, satisfying
\begin{equation}		\label{180323@eq1}
\left\{
\begin{aligned}
\cL G(\cdot,y)+\nabla \Pi(\cdot,y)=\delta_y I &\quad \text{in }\, \Omega,\\
\operatorname{div} G(\cdot,y)=0 &\quad \text{in }\, \Omega,\\
G(\cdot,y)=0 &\quad \text{on }\, \partial \Omega.
\end{aligned}
\right.
\end{equation}
Here, $G$ is a $d\times d$ matrix-valued function, $\Pi$ is a $d\times 1$ vector-valued function, $I$ is the $d\times d$ identity matrix, and $\delta_y$ is the Dirac delta function concentrated at $y$.
For a more precise definition of the Green function for the flow velocity, see Section \ref{S7}.
The other one is a pair $(\cG, \cP)=(\cG(x,y), \cP(x,y))$, which we call  {\em{Green function for the pressure}}, satisfying (for instance, when $|\Omega|<\infty$)
\begin{equation}		\label{180323@eq2}
\left\{
\begin{aligned}
\cL \cG(\cdot,y)+\nabla \cP(\cdot,y)=0 &\quad \text{in }\, \Omega\setminus \{y\},\\
\operatorname{div} \cG(\cdot,y)=\delta_y-\frac{1}{|\Omega|} &\quad \text{in }\, \Omega,\\
\cG(\cdot,y)=0 &\quad \text{on }\, \partial \Omega.
\end{aligned}
\right.
\end{equation}
Here, $\cG$ is a $d\times 1$ vector-valued function and $\cP$ is a real-valued function.
For a more precise definition of the Green function for the pressure, see Section \ref{S2-3}.
An observation is that if there exist Green functions for the flow velocity and the pressure, then the pair $(u, p)$ given by
$$
u(y)=\int_\Omega G(x,y)^{\top}f(x)\,dx, \quad p(y)=-\int_\Omega \cG(x,y)\cdot f(x)\,dx
$$
is a weak solution of the problem
$$
\left\{
\begin{aligned}
\cL^* u+\nabla p=f &\quad \text{in }\, \Omega,\\
\operatorname{div} u=0 &\quad \text{in }\, \Omega,\\
u=0 &\quad \text{on }\, \partial \Omega,
\end{aligned}
\right.
$$
where $\cL^*$ is the adjoint operator of $\cL$.

There
is a large body of literature concerning Green function
for the flow velocity satisfying \eqref{180323@eq1}.
Regarding the classical Stokes system with the Laplace operator $\cL  =\Delta$, i.e.,
$$
A^{\alpha\beta}=[\delta_{\alpha\beta}\delta_{ij}]_{i,j=1}^d \quad \text{($\delta_{ij}$ is the usual Kronecker delta symbol)},
$$
we refer the reader to Ladyzhenskaya \cite{MR0254401}, Maz'ya-Plamenevski{\u\i} \cite{MR0725151, MR0734895}, and D. Mitrea-I. Mitrea \cite{MR2763343}.
In \cite{MR0254401}, the author provided explicit formulas of fundamental solutions in $\bR^2$ and $\bR^3$.
In \cite{MR0725151, MR0734895}, the authors established pointwise estimates for the Green function and its derivatives in a piecewise smooth domain in $\bR^3$.
In \cite{MR2763343}, the authors proved the existence of the Green function in a Lipschitz domain in $\bR^d$, where $d\ge 2$.
For further related results on fundamental solutions and Green functions, one can refer to the book \cite{MR2808162} and references therein.
See also \cite{MR2182091, MR3320459} for Green functions satisfying mixed boundary conditions in domains in $\bR^3$ and $\bR^2$.
Regarding Stokes systems with variable coefficients, i.e.,
$$
A^{\alpha\beta}=[A^{\alpha\beta}_{ij}(x)]_{i,j=1}^d,
$$
we refer the reader to \cite{MR3693868, MR3670039, MR3906316}.
In \cite{MR3693868}, the authors established the existence and pointwise bound of the Green function on a bounded $C^1$ domain  when $d\ge 3$ and $A^{\alpha\beta}$ have vanishing mean oscillations (VMO).
The corresponding results were obtained in \cite{MR3670039} in the whole space and a half space when $A^{\alpha\beta}$ are merely measurable in one direction and have small mean oscillations in the other directions (partially BMO).
In \cite{MR3906316}, the authors constructed the Green function in a two dimensional domain when $A^{\alpha\beta}$ are measurable and bounded.
They also considered pointwise bounds of the Green function and its derivatives.
For further related results, one can refer to
 \cite{arXiv:1710.05383} for the Green function of the Stokes system with oscillating periodic coefficients and \cite{MR3877495} for the Green function satisfying the conormal derivative boundary condition.

In contrast to Green functions for the flow velocity,
there are relatively few results on Green functions for the pressure satisfying \eqref{180323@eq2}.
In particular, we are not able to find any literature dealing with Green functions for the pressure of the Stokes system with variable coefficients.
Restricted to the Stokes system with the Laplace operator, we refer the reader to \cite{MR0734895}, where the authors proved the pointwise estimate for the Green function (and its derivatives) of the Dirichlet problem in a three dimensional domain.
The corresponding results for the mixed problem were obtained in \cite{MR2182091}.

In this paper, we are concerned with both Green functions and fundamental solutions for the pressure of  Stokes systems with variable coefficients.
The class of coefficients $A^{\alpha\beta}$ we consider is called of {\em{partially Dini mean oscillation}}, which means that $A^{\alpha\beta}$ are merely measurable in one direction and have Dini mean oscillation in the other directions; see Definition \ref{D1}.
Stokes systems with irregular coefficients of this type  may be used to describe the motion of inhomogeneous fluids with density dependent viscosity and two or multiple fluids with interfacial boundaries; see \cite{MR0425391,MR1422251,MR2663713,MR3758532}.

Let $\Omega$ be a (possibly unbounded) domain in $\bR^d$ satisfying an exterior measure condition when $d=2$, and assume that the divergence equation is solvable in $\Omega$.
We prove that if coefficients are of partially Dini mean oscillation, then there exists a unique Green function $(\cG, \cP)$ for the pressure in $\Omega$.
The Green function satisfies global pointwise bounds
\begin{equation}		\label{180328@eq1}
\begin{aligned}
|\cG(x,y)|\le C|x-y|^{1-d},  &\quad x\neq y,\\
|D_x \cG(x,y)|+|\cP(x,y)|\le C|x-y|^{-d}, &\quad x\neq y
\end{aligned}
\end{equation}
if we assume further that coefficients are of Dini mean oscillation in the {\em{all}} directions and $\Omega$ has a $C^{1,\rm{Dini}}$ boundary.
Especially, the fundamental solution $(d\ge 3)$ and the Green function $(d\ge 2)$ in a half space  have the global pointwise bounds \eqref{180328@eq1} under a weaker condition that coefficients are of partially Dini mean oscillation.
For further details, see Section \ref{S3}.

We also deal with the Green function $(G,\Pi)$ (and  the fundamental solution) for the flow velocity of Stokes system.
As mentioned above, its existence and pointwise bound, i.e.,
$$
|G(x,y)|\le C|x-y|^{2-d}, \quad x\neq y,
$$
were obtained in \cite{MR3693868, MR3670039}  when $d\ge 3$.
In this paper, we extend the results in \cite{MR3693868, MR3670039} by showing that
$$
|D_xG(x,y)|+|\Pi(x,y)|\le C|x-y|^{1-d}, \quad x\neq y,
$$
under the stronger assumption that the coefficients are of Dini mean oscillation in a domain having a $C^{1,\rm{Dini}}$ boundary.
Moreover, we verify a symmetric property of Green functions for the flow velocity and  the pressure.
For further details, see Section \ref{S7}.

The theory regarding the existence and estimates of Green functions for Stokes systems is closely related to regularity theory of solutions to
\begin{equation}		\label{180329@eq1}
\left\{
\begin{aligned}
\cL u+\nabla p=f &\quad \text{in }\, \Omega,\\
\operatorname{div} u=g &\quad \text{in }\, \Omega.
\end{aligned}
\right.
\end{equation}
When dealing with Green functions for the flow velocity in \cite{ MR3693868, MR3670039, MR3877495}, the authors used $L^\infty$ or $C^{\alpha}$-estimates of solutions $u$, which can be obtained from $W^{1,q}$-estimates for the system \eqref{180329@eq1}.
See \cite{MR3693868,arXiv:1604.02690, MR3758532,MR3809039} for $W^{1,q}$-regularity results with $q\in (1,\infty)$.
This approach was introduced by Hofmann-Kim \cite{MR2341783} to deal with Green functions and fundamental solutions for elliptic systems with VMO coefficients.
In this paper, to construct the Green function for the pressure, we utilize $L^\infty$-estimates of not only $u$ but also $(Du,p)$.
Thus, we are not able to apply the aforementioned $W^{1,q}$-estimates.
Instead, we employ the recent results in \cite{MR3912724, MR3890946}, where we proved $W^{1,\infty}$ and $C^1$-estimates for Stokes systems with coefficients having (partially) Dini mean oscillation.
This argument allows us to get pointwise bounds of the Green function as well as its derivatives.

The remainder of this paper is organized as follows.
We introduce some notation and definitions in the next section.
In Section \ref{S3}, we state the main theorems regarding Green functions for the pressure.
In Section \ref{S_Pre}, we present some preliminary results, and in Sections \ref{S_Ap} and \ref{S_Pr}, we provide the proofs of the main theorems.
We devote Section \ref{S7} to Green functions for the flow velocity.
In Appendix, we provide the proofs of $L^\infty$-estimates, which are crucial for proving our main theorems.

\section{Preliminaries}			\label{S2}

In this section, we introduce some notation and definitions used throughout the paper.

\subsection{Notation}	
We use $x=(x_1,x')$ to denote a point in the Euclidean space $\bR^d$, where $d\ge 2$ and $x'=(x_2,\ldots,x_d)\in \bR^{d-1}$.
We also write $y=(y_1,y')$ and $x_0=(x_{01},x_0')$, etc.
Balls in $\bR^d$ and $\bR^{d-1}$ are defined by
$$
B_r(x)=\{y\in \bR^d:|x-y|<r\}, \quad B_r'(x')=\{y'\in \bR^{d-1}: |x'-y'|<r\}.
$$
Let $\Omega$ be a domain in $\bR^d$.
We write $\Omega_r(x)=\Omega\cap B_r(x)$ for all $x\in \bR^d$ and $r>0$.

For $q\in (0,\infty]$, we define
$$
\tilde{L}^q(\Omega)=
\{u\in L^q(\Omega): (u)_{\Omega}=0\},
$$
where $L^q(\Omega)$ is the set of all measurable functions on $\Omega$ that are $q$-th integrable, and $(u)_\Omega$ is the average of $u$ over $\Omega$, i.e.,
$$
(u)_\Omega=
\left\{
\begin{aligned}
\dashint_\Omega u\,dx=\frac{1}{|\Omega|} \int_\Omega u\,dx &\quad \text{if }\, |\Omega|<\infty,\\
0 &\quad \text{if }\, |\Omega|=\infty.
\end{aligned}
\right.
$$
Note that $\tilde{L}^q(\Omega)=L^q(\Omega)$ if $|\Omega|=\infty$.

For $q\in [1,\infty]$, we denote by $W^{1,q}(\Omega)$ the usual Sobolev space and by $W^{1,q}_0(\Omega)$ the completion of $C^\infty_0(\Omega)$ in $W^{1,q}(\Omega)$, where $C^\infty_0(\Omega)$ is the set of all infinitely differentiable functions with compact supports in $\Omega$.

For $q\in [1,d)$, the space $Y^{1,q}(\Omega)$ is defined as the set of all measurable functions $u$ on $\Omega$ having a finite norm
$$
\|u\|_{Y^{1,q}(\Omega)}=\|u\|_{L^{dq/(d-q)}(\Omega)}+\|Du\|_{L^q(\Omega)},
$$
and the space $Y^{1,q}_0(\Omega)$ is defined as the completion of $C^\infty_0(\Omega)$ in $Y^{1,q}(\Omega)$.
Note that
$$
Y^{1,q}_0(\bR^d)=Y^{1,q}(\bR^d), \quad W^{1,q}_0(\Omega)\subset Y^{1,q}_0(\Omega),
$$
$$
W^{1,q}_0(\Omega)=Y^{1,q}_0(\Omega) \quad \text{if }\, |\Omega|<\infty,
$$
and $Y^{1,2}_0(\Omega)$ $(d\ge 3)$ is a Hilbert space with inner product
\begin{equation}		\label{180726@eq2}
\langle u, v\rangle =\int_{\Omega} Du \cdot Dv \,dx.
\end{equation}
When $d=2$, we denote by $Y^{1,2}_0(\Omega)$  the set of all weakly differentiable functions $u$ on $\Omega$ such that $\nabla u\in L^2(\Omega)$ and $\eta u\in W^{1,2}_0(\Omega)$ for any $\eta\in C^\infty_0(\bR^2)$.
In this case, if $\Omega$ is a Green domain in $\bR^2$, i.e.,
$$
\{u\chi_\Omega: u\in C^\infty_0(\bR^2)\} \not\subset W^{1,2}_0(\Omega),
$$
then $Y^{1,2}_0(\Omega)$ is also a Hilbert space with inner product \eqref{180726@eq2} and $C^\infty_0(\Omega)$ is a dense subset of $Y^{1,2}_0(\Omega)$.
We note that $\bR^2$ is not a Green domain, but $\bR^2_+$ is.
If $|\Omega|<\infty$, then $\Omega$ is a Green domain and $W^{1,2}_0(\Omega)=Y^{1,2}_0(\Omega)$.
For more discussions of the space $Y^{1,q}_0(\Omega)$, see \cite[\S 1.3.4]{MR1461542}.

We say that a measurable function $\omega: [0,a]\to [0, \infty)$ is a Dini function provided that there are constants $c_1,\, c_2>0$ such that
$$
c_1\omega(t)\le \omega(s)\le c_2\omega(t) \quad \text{whenever }\, 0<\frac{t}{2}\le s\le t\le a
$$
and that $\omega$ satisfies the Dini condition
$$
\int_0^a \frac{\omega(t)}{t}\,dt<\infty.
$$

\begin{definition}		\label{D1}
Let $f\in L^1_{\rm{loc}}(\overline{\Omega})$.
\begin{enumerate}[$(a)$]
\item
We say that $f$ is of {\em{partially Dini mean oscillation with respect to $x'$}} in the interior of $\Omega$ if there exists a Dini function $\omega:[0,1]\to [0,\infty)$ such that for any $x\in \Omega$ and $r\in (0,1]$ satisfying $B_{2r}(x)\subset \Omega$, we have
$$
\dashint_{B_r(x)}\bigg|f(y)-\dashint_{B_r'(x')}f(y_1,z')\,dz'\bigg|\,dy\le \omega(r).
$$
\item
We say that $f$ is of {\em{Dini mean oscillation}} in $\Omega$ if there exists a Dini function $\omega:[0, 1]\to [0,\infty)$ such that for any $x\in \overline{\Omega}$ and $r\in (0,1]$, we have
$$
\dashint_{\Omega_r(x)}\bigg|f(y)-\dashint_{\Omega_r(x)} f(z)\,dz \bigg|\,dy\le \omega(r).
$$
\end{enumerate}
\end{definition}

We define a domain with a $C^{1,\rm{Dini}}$ boundary by locally the graph of a $C^1$ function whose derivatives are uniformly Dini continuous.

\begin{definition}		\label{D6}
Let $\Omega$ be a  domain in $\bR^d$.
We say that $\Omega$ has a $C^{1,\rm{Dini}}$ boundary if there exist a constant $R_0\in (0,1]$ and a Dini function $\varrho_0:[0, 1]\to [0,\infty)$ such that the following holds:
For any $x_0=(x_{01},x_0')\in \partial \Omega$, there exist a $C^1$ function $\chi:\bR^{d-1}\to \bR$ and a coordinate system depending on $x_0$ such that in the new coordinate system, we have
$$
|\nabla_{x'}\chi(x_0')|=0, \quad \Omega_{R_0}(x_0)=\{x\in B_{R_0}(x_0): x_1>\chi(x')\},
$$
and
$$
\varrho_{\chi}(r)\le \varrho_0(r) \quad \text{for all }\, r\in [0,R_0],
$$
where $\varrho_{\chi}$ is the modulus of continuity of $\nabla_{x'}\chi$, i.e.,
$$
\varrho_{\chi}(r)=\sup \big\{|\nabla_{x'} \chi(y')-\nabla_{x'} \chi(z')| : y',z'\in \bR^{d-1}, \, |y'-z'|\le r \big\}.
$$
\end{definition}		

\subsection{Stokes system}			\label{S2-2}

Let $\cL$ be a strongly elliptic operator of the form
$$
\cL u=D_\alpha(A^{\alpha\beta}D_\beta u),
$$
where the coefficients $A^{\alpha\beta}=A^{\alpha\beta}(x)$ are $d\times d$ matrix-valued functions on $\bR^d$ satisfying the strong ellipticity condition,
i.e., there is a constant $\lambda\in (0,1]$ such that for any $x\in \bR^d$ and $\xi_\alpha\in \bR^d$, $\alpha\in \{1,\ldots,d\}$, we have
\begin{equation}		\label{180307@eq1}
|A^{\alpha\beta}(x)|\le \lambda^{-1}, \quad \sum_{\alpha,\beta=1}^d A^{\alpha\beta}(x)\xi_\beta \cdot \xi_\alpha\ge \lambda \sum_{\alpha=1}^d |\xi_\alpha|^2.
\end{equation}
The adjoint operator $\cL^*$ of $\cL$ is defined by
$$
\cL^* u=D_\alpha((A^{\beta\alpha})^{\top}D_\beta u),
$$
where $(A^{\beta \alpha})^{\top}$ is the transpose of $A^{\beta \alpha}$.
Note that the coefficients of $\cL^*$ also satisfy the ellipticity condition \eqref{180307@eq1} with the same constant $\lambda$.

Let $\Omega$ be a domain in $\bR^d$.
We say that
$$
(u, p)\in W^{1,1}_{\rm{loc}}(\Omega)^d\times L^1_{\rm{loc}}(\Omega)
$$
is a weak solution of
$$
\cL u+\nabla p=f+D_\alpha f_\alpha \quad \text{in }\, \Omega
$$
if
$$
\int_{\Omega} A^{\alpha\beta}D_\beta u\cdot D_\alpha \phi\,dx+\int_\Omega p \operatorname{div}\phi\,dx=-\int_\Omega f\cdot \phi\,dx+\int_\Omega f_\alpha \cdot D_\alpha \phi\,dx
$$
holds for any $\phi\in C^\infty_0(\Omega)^d$.
Similarly, we say that
$$
(u, p)\in W^{1,1}_{\rm{loc}}(\Omega)^d \times L^1_{\rm{loc}}(\Omega)
$$
is a weak solution of
$$
\cL^* u+\nabla p=f+D_\alpha f_\alpha \quad \text{in }\, \Omega
$$
if
$$
\int_{\Omega} A^{\alpha\beta}D_\beta \phi \cdot D_\alpha u\,dx+\int_\Omega p \operatorname{div}\phi\,dx=-\int_\Omega f\cdot \phi\,dx+\int_\Omega f_\alpha \cdot D_\alpha \phi\,dx
$$
holds for any $\phi\in C^\infty_0(\Omega)^d$.

\subsection{Green function for the pressure}	\label{S2-3}
In this subsection, we state the definition of a Green function for the pressure.
In the definition below, $\cG=\cG(x,y)$ is a $d\times 1$ vector-valued function and $\cP=\cP(x,y)$ is a real-valued function on $\Omega\times \Omega$.

\begin{definition}		\label{D0}
Let $d\ge 2$ and $\Omega$ be a domain in $\bR^d$.
We say that $(\cG,\cP)$ is a Green function for the pressure of $\cL$ in $\Omega$ if it satisfies the following properties.
\begin{enumerate}[$(a)$]
\item
For any $y\in \Omega$ and $R>0$,
$$
\cG(\cdot,y)\in L^1_{\rm{loc}}(\Omega)^d, \quad (1-\eta)\cG(\cdot,y)\in Y^{1,2}_0(\Omega)^d,
$$
where $\eta$ is a smooth function satisfying $\eta\equiv 1$ on $B_R(y)$.
Moreover,
$$
\cP(\cdot,y)\in L^q_{\rm{loc}}(\Omega)\cap L^{2}(\Omega\setminus \overline{B_R(y)})
$$
for any $q\in (0,1)$.
\item
For any $y\in \Omega$ and $R>0$, $(\cG(\cdot,y),\cP(\cdot,y))$ satisfies
\begin{equation}	\label{180212@A1}
\left\{
\begin{aligned}
\cL \cG(\cdot,y)+\nabla \cP(\cdot,y)=0 &\quad \text{in }\, \Omega\setminus \overline{B_R(y)},\\
\operatorname{div} \cG(\cdot,y)= \delta_y -\frac{1}{|\Omega|} &\quad \text{in }\, \Omega,\\
\cG(\cdot,y)=0 &\quad \text{on }\, \partial \Omega,
\end{aligned}
\right.
\end{equation}
where $\frac{1}{|\Omega|}=0$ if $|\Omega|=\infty$.
\item
If $(u,p)\in Y^{1,2}_0(\Omega)^d\times \tilde{L}^2(\Omega)$ is a weak solution of the problem
\begin{equation}		\label{180206@eq2}
\left\{
\begin{aligned}
\cL^* u+\nabla p=f &\quad \text{in }\, \Omega,\\
\operatorname{div} u=g-(g)_\Omega &\quad \text{in }\, \Omega,\\
u=0 &\quad \text{on }\, \partial \Omega,
\end{aligned}
\right.
\end{equation}
where
$f\in C^\infty_0(\Omega)^d$ and $g\in C^\infty_0(\Omega)$,
then for a.e. $y\in \Omega\setminus \overline{\operatorname{supp}g}$, we have
$$
{p}(y)=-\int_\Omega \cG(x,y)\cdot f(x)\,dx+\int_{\Omega}\cP(x,y)g(x)\,dx.
$$
\end{enumerate}
The Green function for the adjoint operator $\cL^*$ is defined similarly.
\end{definition}

We note that in \eqref{180212@A1}, the divergence equation is understood as
$$
\int_\Omega \cG(\cdot,y) \cdot  \nabla \varphi\,dx=-\varphi(y)+(\varphi)_\Omega, \quad \forall \varphi\in C^\infty_0(\Omega).
$$
Since $\cG(\cdot,y)\in Y^{1,2}(\Omega\setminus \overline{B_r(y)})^d$ for any $r>0$,
the above identity implies that
$$
\operatorname{div} \cG(\cdot,y)=-\frac{1}{|\Omega|} \quad \text{for a.e. }\, x\in \Omega\setminus \{y\}.
$$
We also note that the property $(c)$ in Definition \ref{D0} together with the unique solvability of the problem \eqref{180206@eq2} gives the uniqueness of a Green function.
More precisely, under Assumption \ref{A1} below, by the solvability result in Lemma \ref{180108@lem1}, we get the uniqueness of a Green function in the sense that
if $(\tilde{\cG}, \tilde{\cP})$ is another Green function satisfying the properties in Definition \ref{D0}, then
for any $\phi\in C^\infty_0(\Omega)^d$ and $\varphi\in C^\infty_0(\Omega)$, we have
$$
\int_\Omega (\cG(x,y)-\tilde{\cG}(x,y))\cdot \phi(x)\,dx=\int_\Omega (\cP(x,y)-\tilde{\cP}(x,y))\varphi(x)\,dx=0
$$
for a.e. $y\in \Omega\setminus  \overline{\operatorname{supp}\varphi}$.

\section{Main results}	\label{S3}

In this section, we state our main results concerning Green function for the pressure of Stokes system.
For this, we impose the following solvability assumption of the divergence equation.

\begin{assumption}		\label{A1}
There exists a constant $K_0>0$ such that the following holds:
For any $g\in \tilde{L}^2(\Omega)$, there exists $u\in Y^{1,2}_0(\Omega)^d$ satisfying
$$
\operatorname{div} u=g \, \text{ in }\, \Omega, \quad \|Du\|_{L^2(\Omega)}\le K_0 \|g\|_{L^2(\Omega)}.
$$
\end{assumption}

\begin{remark}
It is known that Assumption \ref{A1} holds in a bounded John domain; see \cite[Theorem 4.1]{MR2263708}.
Here and throughout the paper, a domain is said to be bounded if it has finite diameter.
Note that a bounded domain $\Omega$ having a $C^{1,\rm{Dini}}$ boundary as in Definition \ref{D6} is a John domain as in \cite[Definition 2.1]{MR2263708} with respect to $(x_0, L)=(x_0, L)(d, R_0, \varrho_0)$.
Thus by \cite[Theorem 4.1]{MR2263708}, $\Omega$ satisfies Assumption \ref{A1} with $K_0=K_0(d,R_0,\varrho_0,\operatorname{diam}(\Omega))$.

A simple example of unbounded domain satisfying Assumption \ref{A1} is the whole space.
Indeed, based on a scaling argument and the existence of solutions to the divergence equation in a ball, one can verify that Assumption \ref{A1} holds with $K_0=K_0(d)$ when $\Omega=\bR^d$ and $d\ge 3$.
By the same reasoning, Assumption \ref{A1} holds when
$$
\Omega=\{x=(x_1,\ldots,x_d)\in \bR^d: x_1>0, \, x_2>0, \, \text{ or }\, x_d>0\}, \quad d\ge 2.
$$
Exterior domains with Lipschitz boundary also satisfy the assumption; see \cite[Theorem III.3.6, p. 189]{MR2808162}.
\end{remark}

In the theorem below and throughout the paper, we denote
$$
d_y=\operatorname{dist}(y,\partial \Omega), \quad d_y^*=\min\{1,d_y\}.
$$
Note that $d_y=\infty$ if $\Omega=\bR^d$.

\begin{theorem}		\label{M1}
Let $d\ge 2$ and $\Omega$ be a domain in $\bR^d$.
When $d=2$, $\Omega$ is assumed to be a Green domain satisfying
\begin{equation}		\label{180619@eq3}
|B_R(x)\setminus \Omega|\ge \theta R^2, \quad \forall x\in \partial \Omega, \quad \forall R\in (0,\infty).
\end{equation}
Suppose that the coefficients $A^{\alpha\beta}$ of $\cL$ are of partially Dini mean oscillation with respect to $x'$ in the interior of $\Omega$ satisfying Definition \ref{D1} $(a)$ with a Dini function $\omega=\omega_A$.
Then, under Assumption \ref{A1},  there exists a unique Green function $(\cG,\cP)$ for the pressure of $\cL$ in $\Omega$ such that for any $y\in \Omega$,
$$
\cG(\cdot,y) \,\text{ is continuous in }\, \Omega\setminus \{y\}
$$
and
$$
(\cG(\cdot,y), \cP(\cdot,y))\in W^{1,\infty}_{\rm{loc}}(\Omega\setminus \{y\})^d\times L^\infty_{\rm{loc}}(\Omega\setminus \{y\}).
$$
Moreover, for any $x,y\in \Omega$ with
$0<|x-y|\le d_y^*/2$,
we have
\begin{equation}	\label{180207@eq1a}
|\cG(x,y)|\le C|x-y|^{1-d},
\end{equation}
\begin{equation}	\label{180302@eq1}
\operatorname*{ess\, sup}_{B_{|x-y|/4}(x)} \big(|D \cG(\cdot,y)|+|\cP(\cdot ,y)|\big)\le C' |x-y|^{-d},
\end{equation}
where
$(C, C')=(C,C')(d,\lambda,\omega_A, K_0)$ and $C$ depends also on $\theta$ when $d=2$.
The same results hold if $\cL$ is replaced with its adjoint operator $\cL^*$.
\end{theorem}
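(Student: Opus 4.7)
\medskip

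\noindent\textbf{Proof proposal.}
The plan is to follow the duality-based Hofmann--Kim strategy used in \cite{MR3693868, MR3670039}, adapted to the pressure Green function: construct $(\cG,\cP)$ as a limit of approximate Green functions obtained by regularizing the Dirac source, and then read off pointwise bounds from the interior $L^\infty$-estimates of \cite{MR3912724, MR3890946} applied to a dual Stokes system. Uniqueness follows from the characterization in Definition~\ref{D0}(c) combined with the solvability of \eqref{180206@eq2} provided by Lemma~\ref{180108@lem1}.

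\smallskip

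\noindent\emph{Approximation.} Fix $y\in\Omega$ and, for small $\varepsilon>0$, let $\eta_\varepsilon\in C_0^\infty(B_\varepsilon(y))$ be a standard mollifier with $\int\eta_\varepsilon=1$. Using Assumption~\ref{A1} one obtains $w_\varepsilon\in Y^{1,2}_0(\Omega)^d$ with $\operatorname{div} w_\varepsilon=\eta_\varepsilon-(\eta_\varepsilon)_\Omega$; solving the divergence-free Stokes problem for $\cG^\varepsilon-w_\varepsilon$ via Lemma~\ref{180108@lem1} yields an approximate pair $(\cG^\varepsilon(\cdot,y),\cP^\varepsilon(\cdot,y))\in Y^{1,2}_0(\Omega)^d\times\tilde L^2(\Omega)$ satisfying $\cL\cG^\varepsilon+\nabla\cP^\varepsilon=0$ and $\operatorname{div}\cG^\varepsilon=\eta_\varepsilon-(\eta_\varepsilon)_\Omega$ in $\Omega$. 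For $f\in C_0^\infty(\Omega)^d$ and $g\in C_0^\infty(\Omega)$, letting $(u,p)$ be the solution of the adjoint problem \eqref{180206@eq2}, I would test the weak formulation for $(u,p)$ against $\cG^\varepsilon$ and the weak formulation for $(\cG^\varepsilon,\cP^\varepsilon)$ against $u$; subtracting the two identities and using the normalizations $(\cP^\varepsilon)_\Omega=(p)_\Omega=0$ produces the fundamental duality
$$
\int_\Omega p\,\eta_\varepsilon\,dx \;=\; -\int_\Omega \cG^\varepsilon(\cdot,y)\cdot f\,dx \;+\; \int_\Omega \cP^\varepsilon(\cdot,y)\bigl(g-(g)_\Omega\bigr)\,dx.
$$

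\smallskip

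\noindent\emph{Uniform pointwise bounds.} Fix $x_0\in\Omega$ with $R:=|x_0-y|\le d_y^*/2$ and take $\varepsilon\le R/8$. For \eqref{180207@eq1a} at $x_0$, I would test the duality against an $f\in C_0^\infty(B_{R/8}(x_0))^d$ of unit $L^1$ norm (in a chosen direction) with $g\equiv 0$; the adjoint pair $(u,p)$ then satisfies the \emph{homogeneous} system on $B_{R/2}(y)$, so the partially Dini mean oscillation hypothesis and the interior $W^{1,\infty}$ / $L^\infty$-pressure estimates of \cite{MR3912724,MR3890946} (combined with a Caccioppoli/energy estimate bounding the $L^2$ norm of $(Du,p-(p)_{B_{R/2}(y)})$ on $B_{R/2}(y)$ by $\|f\|_{L^1}$ on the scale $R$) give $|p(y)-(p)_{B_{R/2}(y)}|\lesssim R^{1-d}$. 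The constant average $(p)_{B_{R/2}(y)}$ is handled by an additional averaging step (for $|\Omega|<\infty$ using $(p)_\Omega=0$ and in the unbounded case via global $L^2$-estimates), yielding $|\cG^\varepsilon(x_0,y)|\le CR^{1-d}$ uniformly in $\varepsilon$. For \eqref{180302@eq1}, I would instead exploit that on $B_{R/4}(x_0)$ the pair $(\cG^\varepsilon,\cP^\varepsilon)$ itself satisfies a homogeneous Stokes system with $\operatorname{div}\cG^\varepsilon=-(\eta_\varepsilon)_\Omega$ (constant), so the interior $W^{1,\infty}$-estimates of \cite{MR3912724,MR3890946} upgrade the pointwise bound \eqref{180207@eq1a} to the uniform bound on $D\cG^\varepsilon$ and $\cP^\varepsilon-(\cP^\varepsilon)_{B_{R/8}(x_0)}$, at the cost of one factor of $R^{-1}$.

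\smallskip

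\noindent\emph{Limit, properties, and uniqueness.} The uniform estimates permit extracting a subsequence $\varepsilon_k\to 0$ converging to $(\cG(\cdot,y),\cP(\cdot,y))$ weakly in $Y^{1,2}$ and $L^2$ away from $y$, and locally uniformly on compact subsets of $\Omega\setminus\{y\}$ by an Arzel\`a--Ascoli argument (using the Dini modulus of continuity from \cite{MR3912724}). Passing to the limit in the approximate duality gives property~(c) of Definition~\ref{D0}; the equations in (b) follow from the PDE for $(\cG^\varepsilon,\cP^\varepsilon)$ and the fact that $\eta_\varepsilon\rightharpoonup\delta_y$. Uniqueness is a direct consequence of (c) and Lemma~\ref{180108@lem1}. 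The statement for $\cL^*$ is identical since the adjoint coefficients inherit the partially Dini mean oscillation condition. The main technical obstacle is the first bullet of the pointwise step: matching the global mean-zero normalization of $p$ in the adjoint problem with the purely local $L^\infty$-pressure estimates, and doing so \emph{uniformly} in the mollification parameter $\varepsilon$, is where the careful combination of Assumption~\ref{A1}, the Green-domain hypothesis \eqref{180619@eq3} in dimension two, and the Dini-regularity estimates of \cite{MR3912724, MR3890946} must all be used in concert.
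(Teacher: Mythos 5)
Your high-level strategy (approximate Green functions via mollified divergence data, duality against the adjoint problem, interior $W^{1,\infty}/L^\infty$ estimates from the partially Dini theory, Arzel\`a--Ascoli and weak compactness, uniqueness from Definition~\ref{D0}(c) plus Lemma~\ref{180108@lem1}) is indeed the route the paper takes. The construction of $(\cG_\varepsilon,\cP_\varepsilon)$, the verification of properties (a)--(c), and the derivation of \eqref{180302@eq1} from the interior $W^{1,\infty}$ estimate for the homogeneous system in $B_{|x-y|/2}(x)$ are essentially the paper's arguments.

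However, there is a genuine gap in your route to \eqref{180207@eq1a}. You propose testing the duality identity against $f\in C^\infty_0(B_{R/8}(x_0))^d$ of \emph{unit $L^1$ norm} and extracting the pointwise bound $|\cG^\varepsilon(x_0,y)|\le CR^{1-d}$ from $|p(y)|\lesssim R^{1-d}$. The trouble is that Lemma~\ref{180108@lem1} controls $(Du,p)$ in $L^2(\Omega)$ by $\|f\|_{L^{2d/(d+2)}(\Omega)}$ (or by $\|f\|_{L^q}$, $q>1$, when $d=2$), and \emph{not} by $\|f\|_{L^1}$. As $f$ concentrates near $x_0$ with $\|f\|_{L^1}=1$, its $L^{2d/(d+2)}$-norm blows up, so there is no uniform global energy bound for $(u,p)$ from which to feed the Caccioppoli/interior-$L^\infty$ step at $y$. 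The phrase ``Caccioppoli/energy estimate bounding the $L^2$ norm of $(Du,p-(p)_{B_{R/2}(y)})$ on $B_{R/2}(y)$ by $\|f\|_{L^1}$ on the scale $R$'' has no basis: interior Caccioppoli still requires an $L^2$ bound on $u$ over $B_{R/2}(y)$, which cannot be obtained from $\|f\|_{L^1}$ alone. The paper circumvents this by a two-stage argument. First, it establishes an $L^1$-bound $\|\cG_\varepsilon(\cdot,y)\|_{L^1(B_R(x))}\le CR$ (Lemma~\ref{180110@lem1}) by duality against $f=\chi_{B_R(x)}\operatorname{sgn}\cG_\varepsilon(\cdot,y)$, which has $\|f\|_{L^\infty}\le 1$ and hence $\|f\|_{L^q}\le CR^{d/q}$ for all $q$, keeping the adjoint solvability estimate under control. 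Second, it proves uniform $L^2$ and $L^{q^*}$ bounds away from $y$ (Lemma~\ref{180719@lem1}) by a different duality construction using $f_\alpha=\chi_{\Omega\setminus \overline{B_R(y)}}D_\alpha \cG_\varepsilon(\cdot,y)$ and $g=\chi_{\Omega\setminus \overline{B_R(y)}}\cP_\varepsilon(\cdot,y)$; these survive the limit. The pointwise bound \eqref{180207@eq1a} is then obtained \emph{not} by duality but by applying the interior $L^\infty$ estimate \eqref{180110@eq8a} directly to $(\cG(\cdot,y),\cP(\cdot,y))$ on $B_{|x-y|/2}(x)$ and using H\"older together with these global $L^2$/$L^{q^*}$ bounds. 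Your sketch also leaves the control of the pressure average $(\cP^\varepsilon)_{B_{R/8}(x_0)}$ to ``an additional averaging step''; in the paper this is precisely what the global $L^2$ bound on $\cP_\varepsilon$ over $\Omega\setminus\overline{B_R(y)}$ delivers, so the two gaps are really the same missing ingredient.
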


\begin{remark}		\label{180217@rmk5}
From the proof of Theorem \ref{M1}, we get the following estimates for all $y\in \Omega$.
\begin{enumerate}[$(a)$]
\item
For any $R\in (0, d_y^*]$, we have that
$$
\|\cG(\cdot,y)\|_{L^{q^*}(\Omega\setminus \overline{B_R(y)})}\le C R^{-d/2},
$$
$$
\|D\cG(\cdot,y)\|_{L^2(\Omega\setminus \overline{B_R(y)})}+\|\cP(\cdot,y)\|_{L^2(\Omega\setminus \overline{B_R(y)})} \le C'R^{-d/2},
$$
where $q^*=2d/(d-2)$ if $d\ge 3$ and $q^*=\infty$ if $d=2$.
\item
We have that
$$
\big|\{x\in \Omega:|\cG(x,y)|>c_0t\}\big|\le C t^{-d/(d-1)}, \quad t\ge (d_y^*)^{1-d},
$$
$$
\big|\{x\in \Omega: |D_x\cG(x,y)|>t\}\big|+\big|\{x\in \Omega:|\cP(x,y)|>t\}\big|\le C' t^{-1}, \quad t\ge (d_y^*)^{-d},
$$
where $c_0=1$ if $d\ge 3$ and $c_0=c_0(\lambda, \omega_A, K_0,\theta)>0$ if $d=2$.
\item
For any $R\in (0, d_y^*]$, we have that
$$
\|\cG(\cdot,y)\|_{L^q(B_R(y))}\le C_q R^{1-d+d/q}, \quad \forall\ q\in (0,d/(d-1)),
$$
$$
\|D\cG(\cdot,y)\|_{L^q(B_R(y))}+\|\cP(\cdot,y)\|_{L^q(B_R(y))}\le C_q' R^{-d+d/q}, \quad \forall\ q\in (0,1).
$$
\end{enumerate}
In the above, $(C,C')=(C,C')(d,\lambda,\omega_A, K_0)$, $(C_q,C_q')=(C_q,C_q')(d,\lambda,\omega_A, K_0, q)$, and $(C,C_q)$ depends also on $\theta$ when $d=2$.
\end{remark}

\begin{remark}		\label{180718@rmk2}
In Theorem \ref{M1},
if the coefficients $A^{\alpha\beta}$ of $\cL$ are of Dini mean oscillation with respect to {\em{all}} the directions in $\Omega$ satisfying Definition \ref{D1} $(b)$,
then by Definition \ref{D0} $(b)$ and \eqref{180406@eq1}, we see that $D\cG(\cdot,y)$ and $\cP(\cdot,y)$ are continuous in ${\Omega}\setminus\{y\}$.
Hence, $``\operatorname{ess\, sup}"$ in \eqref{180302@eq1} can be replaced with $``\sup"$.
Therefore, for any $x,y\in \Omega$ with $0<|x-y|\le d_y^*/2$, we have
$$
|D_x\cG(x,y)|+|\cP(x,y)|\le C |x-y|^{-d},
$$
where $C=C(d,\lambda, \omega_A, K_0)$.
\end{remark}

\begin{remark}		\label{190118@rmk1}
In the case when $|\Omega|<\infty$, the condition \eqref{180619@eq3} can be replaced with the condition
\begin{equation}		\label{181104@eq1}
|B_R(x)\setminus \Omega|\ge \theta_0R^2, \quad \forall x\in \partial \Omega, \quad \forall R\in (0,1].
\end{equation}
Indeed, \eqref{180619@eq3} and \eqref{181104@eq1} are equivalent because if \eqref{181104@eq1} holds, then \eqref{180619@eq3} also holds with $\theta=\theta(\theta_0, |\Omega|)$.
\end{remark}

In the next theorem, we prove the global pointwise estimates for the Green function and its derivatives in a
domain having a $C^{1,\rm{Dini}}$ boundary.

\begin{theorem}		\label{M2}
Let $d\ge 2$ and  $\Omega$ be a domain in $\bR^d$ having a $C^{1,\rm{Dini}}$ boundary as in Definition \ref{D6}.
When $d=2$, $\Omega$ is assumed to be a Green domain satisfying \eqref{180619@eq3}.
Suppose that the coefficients $A^{\alpha\beta}$ of $\cL$ are of Dini mean oscillation in $\Omega$ satisfying Definition \ref{D1} $(b)$ with a Dini function $\omega=\omega_A$.
Let $(\cG,\cP)$ be the Green function for the pressure of $\cL$ constructed in Theorem \ref{M1} under Assumption \ref{A1}.
Then for any $y\in \Omega$,
$$
\cG(\cdot,y)\, \text{ is continuously differentiable in }\, \overline{\Omega}\setminus \{y\}
$$
and
$$
\cP(\cdot,y)\, \text{ is continuous in }\, \overline{\Omega}\setminus \{y\}.
$$
Moreover, for any $x,y\in \Omega$ with $0<|x-y|\le 1$, we have
\begin{equation}		\label{180208@eq1}
|\cG(x,y)|\le C|x-y|^{1-d},
\end{equation}
\begin{equation}		\label{180217@C1}
|D_x\cG(x,y)|+|\cP(x,y)|\le C |x-y|^{-d},
\end{equation}
where
$C=C(d,\lambda, \omega_A, K_0,  R_0, \varrho_0)$ and $C$ depends also on $\theta$ when $d=2$.
Furthermore,
if $(\cG^*, \cP^*)$ is the Green function for the pressure of $\cL^*$ in $\Omega$, then for any $y\in \Omega$, there exists a measure zero set $N_y\subset \Omega$ containing $y$ such that
\begin{equation}		\label{180220@eq3}
\cP(x,y)=\cP^*(y,x) \quad \text{for all }\, x\in \Omega\setminus N_y.
\end{equation}
\end{theorem}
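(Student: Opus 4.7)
My plan has three layers: up-to-the-boundary regularity of $(\cG(\cdot,y),\cP(\cdot,y))$, the global pointwise bounds \eqref{180208@eq1}--\eqref{180217@C1}, and the symmetry \eqref{180220@eq3}. Throughout, the main new inputs beyond Theorem \ref{M1} are the $C^{1,\mathrm{Dini}}$ regularity of $\partial\Omega$ and the Dini mean oscillation of the coefficients in \emph{all} directions, which together make the boundary $C^1$-estimate of \cite{MR3912724, MR3890946} applicable on half-balls centered on $\partial\Omega$. For the first assertion, fix $y\in\Omega$ and $r\in(0,d_y)$; the pair $(\cG(\cdot,y),\cP(\cdot,y))$ solves the homogeneous Stokes system $\cL\cG+\nabla\cP=0$, $\operatorname{div}\cG=-|\Omega|^{-1}$ in $\Omega\setminus\overline{B_r(y)}$ with $\cG=0$ on $\partial\Omega$. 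Invoking the boundary $C^1$-estimate on every half-ball $\Omega_\rho(x_0)$ with $x_0\in\partial\Omega$ and $\overline{B_\rho(x_0)}\cap\overline{B_r(y)}=\emptyset$ upgrades the interior continuity from Remark \ref{180718@rmk2} to continuity of $D\cG(\cdot,y)$ and $\cP(\cdot,y)$ up to $\overline{\Omega}\setminus\{y\}$.

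For the global pointwise bounds, fix $x,y\in\Omega$ with $r:=|x-y|\le 1$. If $r\le d_y^*/2$, the estimates \eqref{180208@eq1}--\eqref{180217@C1} are immediate from Theorem \ref{M1} (``ess sup'' being upgraded to ``sup'' by the continuity just proved). In the remaining regime $d_y<2r$, I select $x_0\in\partial\Omega$ with $|y-x_0|=d_y\le 2r$, so $|x-x_0|\le Cr$, and apply the boundary $C^1$-estimate to $(\cG(\cdot,y),\cP(\cdot,y))$ on $\Omega_\rho(x_0)$ for a suitable $\rho\simeq r$ with $\overline{B_\rho(x_0)}\cap B_{r/8}(y)=\emptyset$. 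For some $q\in(0,1)$ this gives
$$
\rho\,\|D\cG(\cdot,y)\|_{L^\infty(\Omega_{\rho/2}(x_0))}+\|\cP(\cdot,y)-\bar\cP\|_{L^\infty(\Omega_{\rho/2}(x_0))}\le C\rho^{-d/q}\|\cG(\cdot,y)\|_{L^q(\Omega_\rho(x_0))},
$$
with $\bar\cP$ an average controlled by the global $L^2$-bound of Remark \ref{180217@rmk5}(a). The decisive input is the $L^q$-bound
$$
\|\cG(\cdot,y)\|_{L^q(\Omega_\rho(x_0))}\le C\rho^{1-d+d/q},
$$
which I plan to derive by pairing the global weak-type and energy estimates of Remark \ref{180217@rmk5}(a)--(b) with a Hardy-Poincar\'e inequality that exploits the $C^{1,\mathrm{Dini}}$ geometry of $\partial\Omega$ and the Dirichlet data $\cG|_{\partial\Omega}=0$. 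Substitution then yields \eqref{180217@C1}, while \eqref{180208@eq1} follows from the same machinery applied directly to $\cG$ (for which the mean-value correction is unnecessary thanks to the vanishing Dirichlet data).

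For the symmetry \eqref{180220@eq3}, take $g,h\in C_0^\infty(\Omega)$ with disjoint supports and let $(u,p),(v,q)\in Y^{1,2}_0(\Omega)^d\times\tilde L^2(\Omega)$ solve $\cL u+\nabla p=0$, $\operatorname{div} u=g-(g)_\Omega$ and $\cL^*v+\nabla q=0$, $\operatorname{div} v=h-(h)_\Omega$ respectively (Assumption \ref{A1} and Lemma \ref{180108@lem1}). Testing the two equations against each other gives
$$
\int_\Omega p\,h\,dx=-\int_\Omega A^{\alpha\beta}D_\beta u\cdot D_\alpha v\,dx=\int_\Omega q\,g\,dx,
$$
and property $(c)$ of Definition \ref{D0} applied to $\cL^*$ (for $p$) and to $\cL$ (for $q$) with $f=0$ identifies $p(y)=\int_\Omega\cP^*(x,y)g(x)\,dx$ and $q(y)=\int_\Omega\cP(x,y)h(x)\,dx$ on the complements of $\overline{\operatorname{supp}g}$ and $\overline{\operatorname{supp}h}$. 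Fubini and a relabeling then yield
$$
\int_\Omega\!\int_\Omega\bigl[\cP^*(x,y)-\cP(y,x)\bigr]g(x)h(y)\,dx\,dy=0,
$$
and varying $g,h$ forces $\cP(y,x)=\cP^*(x,y)$ a.e.\ on $\{x\neq y\}$; for each fixed $y$ the Fubini slice yields the required measure-zero set $N_y\ni y$ in \eqref{180220@eq3}.

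The step I expect to be the main obstacle is the boundary $L^q$-bound on $\cG(\cdot,y)$: the interior estimates of Remark \ref{180217@rmk5}(c) only control balls of radius $\le d_y^*$, so extending the scaling $\|\cG(\cdot,y)\|_{L^q(\Omega_\rho(x_0))}\le C\rho^{1-d+d/q}$ to boundary half-balls requires a careful marriage of the global energy/weak-type estimates with the $C^{1,\mathrm{Dini}}$ geometry of $\partial\Omega$ and the Dirichlet condition. Once that bound is secured, the remainder of the argument reduces to routine bookkeeping with the boundary $C^1$-estimate of \cite{MR3912724, MR3890946}.
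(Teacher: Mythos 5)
Your regularity assertion and the intermediate duality identity in the symmetry argument are sound, but both the pointwise-bound step and the conclusion of the symmetry step have genuine gaps.

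For the pointwise bounds, the inequality you propose,
$$
\rho\,\|D\cG(\cdot,y)\|_{L^\infty(\Omega_{\rho/2}(x_0))}+\|\cP(\cdot,y)-\bar\cP\|_{L^\infty(\Omega_{\rho/2}(x_0))}\le C\rho^{-d/q}\|\cG(\cdot,y)\|_{L^q(\Omega_\rho(x_0))},
$$
is not what the boundary $C^1$-estimate (Lemma \ref{180208@lem1}) actually gives: that estimate bounds $(D\cG,\cP)$ in $L^\infty$ by the $L^1$-norms of \emph{all three} of $\cG$, $D\cG$, and $\cP$ on $\Omega_\rho(x_0)$, not by $\cG$ alone; so either you must also supply boundary $L^1$-control of $D\cG(\cdot,y)$ and $\cP(\cdot,y)$, or insert an additional boundary Caccioppoli step (and then handle the $q<1$ issue, since Caccioppoli starts from $L^2$, not $L^q$ with $q<1$). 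The paper avoids your ``main obstacle'' entirely: it does not attempt a scaled $L^q(\Omega_\rho(x_0))$ bound with $q<1$, but instead proves a boundary analog of the interior integral lemmas. Concretely, Lemma \ref{180216@lem1} gives $\|\cG_\varepsilon(\cdot,y)\|_{L^1(\Omega_R(x))}\le CR$ for all $R\le 1$ by a duality argument with the boundary $C^1$-estimate, and Lemma \ref{180324@lem1} gives the global estimates $\|\cG(\cdot,y)\|_{L^{q^*}(\Omega\setminus\overline{B_R(y)})}+\|D\cG(\cdot,y)\|_{L^2(\Omega\setminus\overline{B_R(y)})}+\|\cP(\cdot,y)\|_{L^2(\Omega\setminus\overline{B_R(y)})}\le CR^{-d/2}$ for all $R\le 1$, using a boundary Poincar\'e inequality supported by the exterior-measure property of a $C^{1,\mathrm{Dini}}$ domain. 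These global bounds (Remark \ref{180218@rmk1}(a)) then feed directly into \eqref{180304@eq1} applied on $\Omega_R(x)$ with $B_R(x)\subset\Omega\setminus B_R(y)$, giving \eqref{180208@eq1}--\eqref{180217@C1} by H\"older. Your plan is missing exactly this intermediate layer, and the ``careful marriage'' you defer is where the work lives.

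For the symmetry \eqref{180220@eq3}, your duality/Fubini computation correctly yields $\cP^*(x,y)=\cP(y,x)$ for a.e.\ $(x,y)\in\Omega\times\Omega$, but Fubini then gives only: for a.e.\ $y$, the slice holds for a.e.\ $x$. The theorem claims a measure-zero exceptional set $N_y$ for \emph{every} $y\in\Omega$, and you cannot upgrade ``a.e.\ $y$'' to ``every $y$'' by continuity, since continuity of $\cP^*$ is only known in its first variable, not in $y\mapsto\cP^*(y,x)$. The paper's route is genuinely needed here: it works with the approximated Green functions $(\cG^*_\sigma,\cP^*_\sigma)$, proves $\lim_{\rho\to\infty}\cP_{\varepsilon_\rho}(x,y)=\cP^*(y,x)$ for all $x\neq y$, establishes the nontrivial fact $\cP^*(y,\cdot)\in L^1_{\rm loc}(\overline\Omega\setminus\{y\})$ for each fixed $y$, and then takes $N_y$ to be the complement of the Lebesgue points of $\cP^*(y,\cdot)$ together with $\{y\}$, using continuity of $\cP(\cdot,y)$. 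Your duality identity is a clean alternative start (and indeed recovers the a.e.\ version), but the endgame must be reworked along these lines to cover all $y$.
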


\begin{remark}		\label{180717@rmk1}
Note that any domain $\Omega\subset \bR^d$ having a $C^{1,\rm{Dini}}$ boundary as in Definition \ref{D6} satisfies
\begin{equation}		\label{180703@eq5}
|B_R(x)\setminus \Omega|\ge \theta R^d, \quad \forall x\in \partial \Omega, \quad \forall R\in (0,1],
\end{equation}
where $\theta=\theta(d,R_0, \varrho_0)$.
Therefore, by Remark \ref{190118@rmk1}, the condition \eqref{180619@eq3} can be removed in Theorem \ref{M2} when $d=2$ and $|\Omega|<\infty$.
In this case, the constant $C$ in \eqref{180208@eq1} and \eqref{180217@C1} also depends on $|\Omega|$ instead of $\theta$.

\end{remark}

\begin{remark}		\label{180326@rmk1}
Because the Green function satisfies the zero Dirichlet boundary condition, we have a better estimate for $\cG$ than \eqref{180208@eq1} near the boundary of $\Omega$.
Indeed, for any $x,y\in \Omega$ with $ 0<|x-y|\le 1$, we have
$$
|\cG(x,y)|\le C\min\{d_x,|x-y|\}\cdot  |x-y|^{-d},
$$
where $C=C(d,\lambda,\omega_A, K_0, R_0, \varrho_0)$ and $C$ depends also on $\theta$ when $d=2$.
For further details, see the proof of Theorem \ref{M2} in Section \ref{S_Pr}.
\end{remark}

\begin{remark}		\label{180218@rmk1}
From the proof of Theorem \ref{M2}, we get the following estimates for any $y\in \Omega$.
\begin{enumerate}[$(a)$]
\item
For any $R\in (0, 1]$, we have that
$$
\|\cG(\cdot,y)\|_{L^{q^*}(\Omega\setminus \overline{B_R(y)})}\le C R^{-d/2},
$$
$$
\|D\cG(\cdot,y)\|_{L^2(\Omega\setminus \overline{B_R(y)})}+\|\cP(\cdot,y)\|_{L^2(\Omega\setminus \overline{B_R(y)})} \le C'R^{-d/2},
$$
where $q^*=2d/(d-2)$ if $d\ge 3$ and $q^*=\infty$ if $d=2$.
\item
For any $t>0$, we have that
$$
\big|\{x\in \Omega:|\cG(x,y)|>t\}\big|\le C t^{-d/(d-1)},
$$
$$
\big|\{x\in \Omega: |D_x\cG(x,y)|>t\}\big|+\big|\{x\in \Omega:|\cP(x,y)|>t\}\big|\le C' t^{-1}.
$$
\item
For any $R\in (0,1]$, we have that
$$
\|\cG(\cdot,y)\|_{L^q(B_R(y))}\le C_q R^{1-d+d/q}, \quad \forall\ q\in (0, d/(d-1)),
$$
$$
\|D\cG(\cdot,y)\|_{L^q(B_R(y))}+\|\cP(\cdot,y)\|_{L^q(B_R(y))}\le C_q' R^{-d+d/q}, \quad \forall\ q\in (0,1).
$$
\end{enumerate}
In the above,
$$
(C,C')=(C,C')(d,\lambda, \omega_A, K_0, R_0, \varrho_0), \,\, (C_q,C_q')=(C_q,C_q')(d,\lambda, \omega_A, K_0, R_0, \varrho_0,q),
$$
and $(C,C_q)$ depends also on $\theta$ when $d=2$.
\end{remark}

We finish this section with the following theorem, where we extend the results in Theorem \ref{M1} up to the boundary in a half space $\bR^d_+$, defined by
$$
\bR^d_+=\{x=(x_1,x')\in \bR^d: x_1>0, \, x'\in \bR^{d-1}\}, \quad d\ge 2,
$$
when  the measurable direction of the coefficients is perpendicular to the boundary $\partial \bR^d_+$.
For the reader's convenience and future reference, we present the results together with the case when $\Omega=\bR^d$ with $d\ge 3$.

\begin{definition}		\label{D3}
Let $f\in L^1_{\rm{loc}}(\overline{\Omega})$, where $\Omega=\bR^d$ or $\bR^d_+$.
We say that $f$ is of {\em{partially Dini mean oscillation with respect to $x'$}} in $\Omega$ if there exists a Dini function $\omega:(0,1]\to [0,\infty)$ such that for any $x\in \overline{\Omega}$ and $r\in (0,1]$, we have
$$
\dashint_{\Omega_r(x)}\bigg|f(y)-\dashint_{B_r'(x')}f(y_1,z')\,dz'\bigg|\,dy\le \omega(r).
$$
\end{definition}

\begin{theorem}		\label{M5}
Let $\Omega=\bR^d$ with $d\ge 3$ or $\Omega=\bR^d_+$ with $d\ge 2$.
Suppose that the coefficients $A^{\alpha\beta}$ of $\cL$ are of partially Dini mean oscillation with respect to $x'$ in $\Omega$ satisfying Definition \ref{D3} with a Dini function $\omega=\omega_A$.
Then there exists a unique Green function $(\cG, \cP)$ for the pressure of $\cL$ in $\Omega$ such that for any $y\in \Omega$ and $r>0$,
$$
\cG(\cdot,y) \, \text{ is continuous in }\, \overline{\Omega}\setminus \{y\}
$$
and
\begin{equation}		\label{180724@A1}
(\cG(\cdot,y), \cP(\cdot,y))\in W^{1,\infty}(\Omega\setminus \overline{B_r(y)})^d\times L^\infty(\Omega\setminus \overline{B_r(y)}).
\end{equation}
Moreover, for any $x,y\in \Omega$ satisfying $0<|x-y|\le 1$, we have
\begin{equation}		\label{180227@eq1}
|\cG(x,y)|\le C\min\{d_x,|x-y|\}\cdot |x-y|^{-d},
\end{equation}
\begin{equation}		\label{180302@eq2}
\operatorname*{ess\,sup}_{\Omega_{|x-y|/4}(x)} \big(|D \cG(\cdot,y)|+|\cP(\cdot,y)|\big)\le C|x-y|^{-d},
\end{equation}
where $C=C(d,\lambda, \omega_A)$.
Furthermore, the same estimates in Remark \ref{180218@rmk1} hold with $C=C(d,\lambda, \omega_A)$ and $C_q=C_q(d,\lambda, \omega_A, q)$.
The same results hold if $\cL$ is replaced with its adjoint operator $\cL^*$.
\end{theorem}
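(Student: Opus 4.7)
The plan is to adapt the proof of Theorem \ref{M1} to the half-space and whole-space settings, the main new ingredient being \emph{up-to-boundary} pointwise estimates for Stokes systems with partially Dini mean oscillation coefficients. Since Assumption \ref{A1} holds in both $\bR^d$ ($d\ge 3$) and $\bR^d_+$ (all $d\ge 2$), the construction of $(\cG,\cP)$ by mollification of the delta mass at $y$ and weak-$\ast$ passage to the limit of the resulting approximate Green functions $(\cG_\epsilon, \cP_\epsilon)$ carries over verbatim from Section \ref{S_Pr}. What requires new input is the $\epsilon$-uniform pointwise control of $|\cG_\epsilon|$, $|D\cG_\epsilon|$, and $|\cP_\epsilon|$ at points $x\in \Omega$ that may be arbitrarily close to $\partial \bR^d_+$.

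The crucial observation is that the measurable direction $x_1$ is perpendicular to $\partial \bR^d_+$, so the boundary $W^{1,\infty}$ and $C^1$ regularity theory of \cite{MR3912724, MR3890946} applies: for any solution $(v,q)$ of the homogeneous Stokes system on $\Omega_r(x_0)$ with $x_{01}=0$ and $v=0$ on $\{x_1=0\}\cap B_r(x_0)$, one has
$$
\|Dv\|_{L^\infty(\Omega_{r/2}(x_0))} + \bigl\|q-(q)_{\Omega_{r/2}(x_0)}\bigr\|_{L^\infty(\Omega_{r/2}(x_0))} \le C r^{-1-d/2}\bigl(\|Dv\|_{L^2(\Omega_r(x_0))} + \|q\|_{L^2(\Omega_r(x_0))}\bigr)
$$
with $C=C(d,\lambda,\omega_A)$. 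This is precisely the reason a half-space allows one to relax from full Dini mean oscillation (as Theorem \ref{M2} requires) to partial Dini mean oscillation along the tangential directions $x'$ alone. I would apply this estimate on dyadic annuli $\Omega_{2^{k+1}r}(y)\setminus \overline{\Omega_{2^k r}(y)}$, covered by a bounded number of balls each either interior or centered on $\{x_1=0\}$, in combination with the standard energy bound $\|D\cG_\epsilon\|_{L^2}^2+\|\cP_\epsilon\|_{L^2}^2 \lesssim R^{-d}$ on such annuli obtained by testing the equation against $\cG_\epsilon$ itself. Together these produce $|D\cG_\epsilon(x,y)| + |\cP_\epsilon(x,y)| \le C|x-y|^{-d}$ uniformly in $\epsilon$, and a similar but simpler iteration gives $|\cG_\epsilon(x,y)|\le C|x-y|^{1-d}$.

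The enhanced estimate $|\cG(x,y)|\le C\min\{d_x,|x-y|\}\,|x-y|^{-d}$ is then immediate as in Remark \ref{180326@rmk1}: when $d_x\le |x-y|$, integrate the Lipschitz bound for $\cG(\cdot,y)$ along the normal segment from $(0,x')$ to $x$, using the zero boundary condition; when $d_x > |x-y|$, use the plain bound $|\cG(x,y)|\le C|x-y|^{1-d}$. The tail estimates of Remark \ref{180218@rmk1} follow by standard level-set arguments from the pointwise bounds (the restriction ``$t\ge (d_y^\ast)^\bullet$'' present in Remark \ref{180217@rmk5} disappears once the estimates are global), and uniqueness is read off from property $(c)$ of Definition \ref{D0} together with Lemma \ref{180108@lem1}. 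The main obstacle I foresee is purely technical: ensuring that the boundary estimate of \cite{MR3912724, MR3890946} can be applied uniformly on every dyadic annulus that intersects $\partial \bR^d_+$, which requires centering suitable half-balls on the boundary and patching with the interior estimate at points deeper inside $\bR^d_+$; this reduces to bookkeeping already implicit in the proofs of Theorems \ref{M1}--\ref{M2}.
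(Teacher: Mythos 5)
Your proposal matches the paper's approach: the paper also starts from the Green function already constructed in Theorem \ref{M1} (which applies since the interior partial-Dini condition is automatic and Assumption \ref{A1} holds), and upgrades its regularity and bounds to the boundary by replacing the $C^{1,\mathrm{Dini}}$-domain estimate Lemma \ref{180208@lem1} with the half-space $W^{1,\infty}$-estimate Lemma \ref{180227@lem2}, which is exactly the estimate from \cite{MR3912724} that exploits the measurable direction being perpendicular to $\partial\bR^d_+$ --- the key observation you identify. Your description of the energy step is a bit loose (the $L^2$ tail bound on $\Omega\setminus\overline{B_R(y)}$ is obtained by a duality argument with an auxiliary solution, as in Lemma \ref{180719@lem1}, rather than by literally testing against $\cG_\varepsilon$), but this is a presentational detail rather than a gap.
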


\section{Preliminary results}		\label{S_Pre}

In this section, we prove some preliminary results.
We do not impose any regularity assumptions on the coefficients $A^{\alpha\beta}$ of the operator $\cL$.
The following lemma concerns the solvability  of Stokes system in $Y^{1,2}_0(\Omega)\times \tilde{L}^2(\Omega)$.

\begin{lemma}		\label{180108@lem1}
Let $d\ge 2$ and  $\Omega$ be a domain in $\bR^d$.
When $d=2$, $\Omega$ is assumed to be a Green domain.
Suppose that $f_\alpha\in L^2(\Omega)^d$, $g\in \tilde{L}^2(\Omega)$, and
$$
f\in
\left\{
\begin{aligned}
L^{2d/(d+2)}(\Omega)^d &\quad \text{if }\, d\ge 3,\\
L^q(\Omega)^2 \, \text{for some $q>1$ with a compact support} &\quad \text{if }\, d=2.
\end{aligned}
\right.
$$
Then, under Assumption \ref{A1}, there exists a unique $(u,p)\in Y^{1,2}_0(\Omega)^d\times \tilde{L}^2(\Omega)$ satisfying
$$
\left\{
\begin{aligned}
\cL u+\nabla p=f+D_\alpha f_\alpha &\quad \text{in }\, \Omega,\\
\operatorname{div} u=g &\quad \text{in }\, \Omega,\\
u=0 &\quad \text{on }\, \partial \Omega.
\end{aligned}
\right.
$$
Moreover, we have that for $d\ge 3$,
\begin{equation}		\label{190101@eq1}
\|Du\|_{L^2(\Omega)}+\|p\|_{L^2(\Omega)}\le C\big(\|f\|_{L^q(\Omega)}+\|f_\alpha\|_{L^2(\Omega)}+\|g\|_{L^2(\Omega)}\big),
\end{equation}
and that for $d=2$,
$$
\|Du\|_{L^2(\Omega)}+\|p\|_{L^2(\Omega)}\le C_q\|f\|_{L^q(\Omega)}+C\big(\|f_\alpha\|_{L^2(\Omega)}+\|g\|_{L^2(\Omega)}\big),
$$
where $C=C(d,\lambda,K_0)$ and $C_q=C_q(\lambda,K_0, \Omega, \operatorname{supp}f,q)$.
\end{lemma}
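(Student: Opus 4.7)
The plan is threefold: reduce to the divergence-free case using Assumption \ref{A1}, solve the resulting elliptic problem via Lax-Milgram on the divergence-free subspace of $Y^{1,2}_0(\Omega)^d$, and recover the pressure via a closed-range (de Rham) argument that again relies on Assumption \ref{A1}.

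First, invoke Assumption \ref{A1} to produce $u_0\in Y^{1,2}_0(\Omega)^d$ with $\operatorname{div} u_0=g$ and $\|Du_0\|_{L^2(\Omega)}\le K_0\|g\|_{L^2(\Omega)}$. Setting $v=u-u_0$ reduces the problem to solving for a divergence-free $v$ and a pressure $p$ with the same right-hand side, except that $f_\alpha$ is replaced by $\tilde f_\alpha := f_\alpha - A^{\alpha\beta}D_\beta u_0$, which still lies in $L^2(\Omega)^d$ with the expected bound. Let $V:=\{w\in Y^{1,2}_0(\Omega)^d:\operatorname{div} w=0\}$, a closed subspace of the Hilbert space $Y^{1,2}_0(\Omega)^d$ under the inner product \eqref{180726@eq2}, and apply Lax-Milgram to the bilinear form $B(v,\phi):=\int_\Omega A^{\alpha\beta}D_\beta v\cdot D_\alpha\phi\,dx$ on $V\times V$; boundedness is immediate from \eqref{180307@eq1}, and coercivity $B(\phi,\phi)\ge \lambda\|D\phi\|_{L^2}^2$ is the ellipticity condition. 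The right-hand side functional $F(\phi)=\int f\cdot\phi\,dx-\int \tilde f_\alpha\cdot D_\alpha\phi\,dx$ is bounded on $Y^{1,2}_0(\Omega)^d$: the $\tilde f_\alpha$ term is trivial, and for $d\ge 3$ the $f$ term is controlled via the Sobolev embedding $Y^{1,2}_0(\Omega)\hookrightarrow L^{2d/(d-2)}(\Omega)$, giving $|\int f\cdot\phi|\le C(d)\|f\|_{L^{2d/(d+2)}}\|D\phi\|_{L^2}$. For $d=2$, the Green-domain hypothesis yields a Poincaré-type inequality on $\Omega\cap B_R$ for any bounded $R$; combined with the 2D Sobolev embedding $W^{1,2}\hookrightarrow L^{q'}$ (for any $q'<\infty$) applied on an enclosing bounded set, and the compactness of $\operatorname{supp} f$, one gets $|\int f\cdot\phi|\le C_q\|f\|_{L^q}\|D\phi\|_{L^2}$ with $C_q$ depending on $\Omega$, $\operatorname{supp} f$, and $q$.

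Second, Lax-Milgram produces a unique $v\in V$ satisfying $B(v,\phi)=F(\phi)$ for all $\phi\in V$, with $\|Dv\|_{L^2}\le \lambda^{-1}\|F|_V\|_{V^*}$. To recover the pressure, consider
$$\Phi(\phi):=F(\phi)-B(v,\phi),\qquad \phi\in Y^{1,2}_0(\Omega)^d,$$
a bounded linear functional that vanishes on $V$. Assumption \ref{A1} asserts precisely that $\operatorname{div}:Y^{1,2}_0(\Omega)^d\to \tilde{L}^2(\Omega)$ is surjective with a bounded right inverse, so the induced map $Y^{1,2}_0(\Omega)^d/V\to \tilde{L}^2(\Omega)$ is a Banach-space isomorphism. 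Hence $\Phi$ factors through $\tilde{L}^2(\Omega)$ and is represented as $\Phi(\phi)=-\int_\Omega p\operatorname{div}\phi\,dx$ for a unique $p\in\tilde{L}^2(\Omega)$, with $\|p\|_{L^2}\le K_0\|\Phi\|_{(Y^{1,2}_0)^*}$. Setting $u=v+u_0$ yields the sought weak solution, and assembling the bounds on $\|Du_0\|_{L^2}$, $\|Dv\|_{L^2}$, and $\|p\|_{L^2}$ gives the stated energy estimates. Uniqueness follows by testing the homogeneous system against $u$: the pressure term drops out since $\operatorname{div} u=0$, coercivity forces $Du=0$ and hence $u=0$ in $Y^{1,2}_0(\Omega)$, after which $\nabla p=0$ together with $(p)_\Omega=0$ forces $p=0$.

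The main obstacle is the pressure recovery: although the argument itself is not deep, it genuinely requires the quantitative version of Assumption \ref{A1} (surjectivity \emph{with} a norm bound) to control $\|p\|_{L^2}$, and the constant $K_0$ enters every final estimate through this step. A secondary subtlety is the $d=2$ case: because $Y^{1,2}_0(\Omega)$ does not embed globally into any Lebesgue space, one must combine the compactness of $\operatorname{supp} f$ with the Green-domain structure to manufacture a localized Sobolev-type inequality, which explains why the constant $C_q$ in the $d=2$ estimate is forced to depend on $\Omega$ and $\operatorname{supp} f$ in addition to $\lambda$, $K_0$, and $q$.
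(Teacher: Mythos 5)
Your proof is correct and follows essentially the same route as the paper's: reduce to a divergence-free problem via Assumption \ref{A1}, apply Lax--Milgram on the divergence-free subspace, and recover the pressure from the residual functional's vanishing on that subspace. The only cosmetic difference is that the paper uses the orthogonal projection $P$ onto $H^\perp(\Omega)$ to make the Riesz-representation construction of $p$ well-defined (and presents only the $d=2$, $|\Omega|=\infty$ case in detail, citing \cite[Lemma 3.2]{MR3693868} for the rest), whereas you phrase the same pressure-recovery step as factoring the functional through the quotient $Y^{1,2}_0(\Omega)^d/V\cong\tilde{L}^2(\Omega)$.
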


\begin{proof}
We only present here the detailed proof of the case when $d=2$ and $|\Omega|=\infty$ because the others are the same as the proof of \cite[Lemma 3.2]{MR3693868}, where the authors proved the $W^{1,2}$-solvability in a bounded domain.

Let $f$, $f_\alpha$, and $g$ satisfy the hypothesis of the lemma.
Suppose that $\operatorname{supp}f\subset B_{r_0}=B_{r_0}(0)$ for some $r_0>0$.
Since $f$ has a compact support, we may assume that $q\in (1,2)$.
We define a Hilbert space $H(\Omega)$ as the set of all functions $u\in Y^{1,2}_0(\Omega)^2$ satisfying $\operatorname{div} u=0$ in $\Omega$.
Let $H^\bot(\Omega)$ be the orthogonal complement of $H(\Omega)$ in $Y^{1,2}_0(\Omega)^2$ and $P$ be the orthogonal projection from $Y^{1,2}_0(\Omega)^2$ onto
$H^\bot(\Omega)$.
By Assumption \ref{A1}, there exists $w\in Y^{1,2}_0(\Omega)^2$ such that
$$
\operatorname{div}(Pw)=\operatorname{div} w=g \, \text{ in }\, \Omega,
$$
and
\begin{equation}		\label{180726@eq5}
\|D(Pw)\|_{L^2(\Omega)}\le \|Dw\|_{L^2(\Omega)}\le K_1 \|g\|_{L^2(\Omega)}.
\end{equation}
Now we set
$$
\cF(\phi)=-\int_\Omega f\cdot \phi\,dx+\int_\Omega f_\alpha \cdot D_\alpha \phi\,dx-\int_\Omega A^{\alpha\beta} D_\beta (Pw)\cdot D_\alpha \phi\,dx, \quad \phi\in H(\Omega).
$$
By H\"older's inequality, the Poincar\'e inequality, and \cite[Lemma 1.84]{MR1461542}, we have
\begin{align*}
\bigg|\int_\Omega f\cdot \phi\,dx\bigg|
&\le \|f\|_{L^q(\Omega_{r_0})}\big\|\phi\chi_{B_{r_0}}-\big(\phi \chi_{B_{r_0}}\big)_{B_{r_0}}\big\|_{L^{q/(q-1)}(B_{r_0})}\\
&\quad +C\|f\|_{L^q(\Omega_{r_0})}\|\phi\|_{L^1(\Omega_{r_0})}\\
&\le C\|f\|_{L^q(\Omega_{r_0})} \big\|D\big(\phi \chi_{B_{r_0}}\big)\big\|_{L^{2q/(3q-2)}(B_{r_0})}+C\|f\|_{L^q(\Omega_{r_0})} \|\phi\|_{L^2(\Omega_{r_0})}\\
&\le C_0\|f\|_{L^q(\Omega_{r_0})} \|D\phi\|_{L^2(\Omega)},
\end{align*}
where $C_0=C_0(\Omega, B_{r_0},q)$.
From this together with \eqref{180726@eq5}, we see that $\cF$ is a bounded linear functional on $H(\Omega)$ satisfying
$$
|\cF(\phi)|\le CM \|D\phi\|_{L^2(\Omega)}, \quad M:= C_0 \|f\|_{L^q(\Omega)}+\|f_\alpha\|_{L^2(\Omega)}+\|g\|_{L^2(\Omega)},
$$
where $C=C(\lambda, K_1)$.
Thus by the Lax-Milgram theorem, there exists  $v\in H(\Omega)$ satisfying
$$
\|Dv\|_{L^2(\Omega)}\le C M, \quad \int_\Omega A^{\alpha\beta}D_\beta v\cdot D_\alpha \phi\,dx=\cF(\phi), \quad \forall \phi\in H(\Omega).
$$
Therefore, the function $u=v+Pw$ satisfies
\begin{equation}		\label{180726@eq7}
\operatorname{div} u=g \, \text{ in }\, \Omega, \quad \|Du\|_{L^2(\Omega)}\le CM,
\end{equation}
\begin{equation}		\label{180726@eq7a}
\int_{\Omega}A^{\alpha\beta}D_\beta u \cdot D_\alpha \phi\,dx=-\int_\Omega f\cdot \phi\,dx+\int_\Omega f_\alpha \cdot D_\alpha \phi\,dx, \quad \forall \phi \in H(\Omega).
\end{equation}
To find $p$, we set
$$
\cK( \varphi)=-\int_{\Omega}A^{\alpha\beta} D_\beta u\cdot D_\alpha (P \Phi)\,dx-\int_\Omega f\cdot (P\Phi)\,dx+\int_\Omega f_\alpha \cdot D_\alpha (P\Phi)\,dx,
$$
where $\varphi \in L^2(\Omega)$ and $\Phi\in Y^{1,2}_0(\Omega)^2$ such that
$$
\operatorname{div}(P\Phi)=\operatorname{div}\Phi= \varphi \, \text{ in }\, \Omega
$$
and
$$
\|D(P\Phi)\|_{L^2(\Omega)}\le \|D\Phi\|_{L^2(\Omega)}\le K_1 \|\varphi\|_{L^2(\Omega)}.
$$
Then it can be easily seen that $\cK$ is a bounded linear functional in $L^2(\Omega)$ with the estimate
$$
|\cK( \varphi)|\le C(\|Du\|_{L^2(\Omega)}+M)\|\varphi\|_{L^2(\Omega)}
\le CM\|\varphi\|_{L^2(\Omega)}.
$$
Therefore, by the Riesz representation theorem, there exists $p\in L^2(\Omega)$ such that
$$
\|p\|_{L^2(\Omega)}\le CM, \quad \int_\Omega p \varphi\,dx=\cK(\varphi), \quad \forall \varphi\in L^2(\Omega).
$$
By the definition of $\cK$ and $P$, we have
$$
\int_\Omega A^{\alpha\beta} D_\beta u \cdot D_\alpha \phi\,dx+\int_\Omega p\operatorname{div} \phi\,dx=-\int_\Omega f\cdot \phi\,dx+\int_\Omega f_\alpha \cdot D_\alpha \phi\,dx, \quad \forall \phi\in H^{\bot}(\Omega).
$$
This together with \eqref{180726@eq7} and \eqref{180726@eq7a} proves the lemma when $d=2$.
\end{proof}

In the two dimensional case, the $L^2$-estimate in Lemma \ref{180108@lem1} is not well suited to proving optimal estimates of Green functions.
Hence, instead of the $L^2$-estimate, we shall use a $L^q$-estimate for some $q>2$ (see Lemma \ref{180913@lem1} below), which is an easy consequence of the following reverse H\"older's inequality.

\begin{lemma}		\label{180723@lem1}
Let $\Omega$ be a Green domain in $\bR^2$ satisfying \eqref{180619@eq3}.
Then, under Assumption \ref{A1}, there exists $q_0=q_0(\lambda,K_0, \theta)>2$ such that if
$(u, p)\in Y^{1,2}_0(\Omega)^d\times \tilde{L}^2(\Omega)$ satisfies
$$
\left\{
\begin{aligned}
\cL u+\nabla p=D_\alpha f_\alpha &\quad \text{in }\, \Omega,\\
\operatorname{div} u=g &\quad \text{in }\, \Omega,\\
u=0 &\quad \text{on }\, \partial \Omega,
\end{aligned}
\right.
$$
where $f_\alpha\in L^{q_0}(\Omega)^2$ and $g\in \tilde{L}^{q_0}(\Omega)$, then for any $x\in \overline{\Omega}$ and $R\in (0,\infty)$, we have
\begin{equation}		\label{181026@eq1}
\big(|Du|^{q_0}+|p|^{q_0}\big)^{1/q_0}_{\Omega_{R/2}(x)}
\le C\big(|Du|^2+|p|^2\big)^{1/2}_{\Omega_{R}(x)}
+C\big(|f_\alpha |^{q_0}+|g|^{q_0}\big)^{1/q_0}_{\Omega_{R}(x)},
\end{equation}
where $C=C(\lambda, K_0, \theta)$.
\end{lemma}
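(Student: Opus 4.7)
The plan is a Gehring-type self-improvement argument. I would first establish a Caccioppoli-type inequality for the pair $(Du, p-c)$ on balls centered at $\overline{\Omega}$, then combine with the two-dimensional Sobolev--Poincar\'e inequality to obtain a weak reverse H\"older inequality at some subcritical exponent $s/2 < 1$, and finally invoke the classical Gehring lemma to upgrade to an exponent $q_0 > 2$.

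\textbf{Step 1: Caccioppoli for $(Du, p)$.} Fix $x_0 \in \overline{\Omega}$ and $R > 0$, and for $R/2 \le r < \rho \le R$ take a cut-off $\eta \in C^\infty_0(B_\rho(x_0))$ with $\eta \equiv 1$ on $B_r(x_0)$ and $|\nabla\eta| \lesssim (\rho-r)^{-1}$. Let $a = (u)_{\Omega_\rho(x_0)}$ if $B_\rho(x_0) \subset \Omega$ and $a = 0$ otherwise (using $u = 0$ on $\partial\Omega$); set $c = (p)_{\Omega_\rho(x_0)}$. Testing the momentum equation against $\phi = \eta^2(u - a)$, using the divergence equation $\operatorname{div} u = g$, ellipticity \eqref{180307@eq1}, and Young's inequality gives
\[
\int \eta^2 |Du|^2 \le \tfrac12 \int \eta^2 |p - c|^2 + \frac{C}{(\rho-r)^2}\int_{\Omega_\rho}|u - a|^2 + C\int_{\Omega_\rho}(|f_\alpha|^2 + |g|^2).
\]
To close on the pressure, solve the divergence equation for an auxiliary $w$ with $\operatorname{div} w = \eta^2(p - c) - \mu$ (where $\mu$ is the mean correction, which is $0$ when $|\Omega| = \infty$) by a Bogovski\u\i-type construction on $\Omega_\rho(x_0)$; the construction is available both for interior balls (where it depends only on $d$) and for boundary balls (where the measure density condition \eqref{180619@eq3} together with Assumption \ref{A1} yields a local operator norm depending on $K_0$ and $\theta$). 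Testing the momentum equation against $w$ and using that $p \in \tilde{L}^2(\Omega)$ produces $\int \eta^2 (p-c)^2 \lesssim \int_{\Omega_\rho}(|Du|^2 + |f_\alpha|^2) + \text{lower-order}$. Combining and iterating in $(r, \rho)$ by Widman's hole-filling trick yields
\[
\int_{\Omega_{R/2}(x_0)}\!\!\bigl(|Du|^2 + |p - c|^2\bigr) \le \frac{C}{R^2}\int_{\Omega_R(x_0)}|u - a|^2 + C\int_{\Omega_R(x_0)}(|f_\alpha|^2 + |g|^2),
\]
with $C = C(\lambda, K_0, \theta)$.

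\textbf{Step 2: Sobolev--Poincar\'e and weak reverse H\"older.} In the interior case $u - a$ has zero mean on $\Omega_R(x_0) = B_R(x_0)$; in the boundary case, the zero extension of $u$ from $\Omega_R(x_0)$ lies in $W^{1,s}_0(B_R(x_0))$ uniformly, thanks to \eqref{180619@eq3}. For any $s \in (1, 2)$, the $\mathbb{R}^2$ Sobolev--Poincar\'e inequality then gives
\[
\Bigl(\dashint_{\Omega_R(x_0)} |u - a|^2\Bigr)^{1/2} \le CR \Bigl(\dashint_{\Omega_R(x_0)} |Du|^s\Bigr)^{1/s}.
\]
Setting $V := |Du|^2 + |p - c|^2$ and $F := |f_\alpha|^2 + |g|^2$, combining with Step 1 produces the weak reverse H\"older inequality
\[
\dashint_{\Omega_{R/2}(x_0)} V \le C\Bigl(\dashint_{\Omega_R(x_0)} V^{s/2}\Bigr)^{2/s} + C \dashint_{\Omega_R(x_0)} F.
\]

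\textbf{Step 3: Gehring.} The previous display is a weak reverse H\"older inequality with exponent $s/2 < 1$. The standard Gehring lemma for weights on domains with a uniform measure density property then produces $\epsilon_0 = \epsilon_0(\lambda, K_0, \theta) > 0$ such that for every $\epsilon \in (0, \epsilon_0)$, $V \in L^{1+\epsilon}_{\mathrm{loc}}(\Omega)$ with a reverse H\"older inequality whose constant depends on $(\lambda, K_0, \theta)$. Taking $q_0 := 2(1+\epsilon) > 2$, extracting a square root, and using $|c| \le (|p|^2)^{1/2}_{\Omega_R(x_0)}$ to pass from $|p - c|$ back to $|p|$ on both sides yields \eqref{181026@eq1}.

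\textbf{Expected main obstacle.} The most delicate step is the pressure half of Step 1 in the boundary case. Assumption \ref{A1} supplies global solvability of the divergence equation on $\Omega$, but the Caccioppoli argument requires a \emph{localized} test function $w$ whose support is controlled by $\operatorname{supp}\eta \subset B_\rho(x_0)$; verifying that such a local Bogovski\u\i\ operator exists on $\Omega_\rho(x_0)$ with operator norm quantitatively controlled by $K_0$ and the measure parameter $\theta$ (and does not blow up as $\rho \to 0$) is the main technical point, and is precisely why both $q_0$ and the constant $C$ in \eqref{181026@eq1} are allowed to depend on $K_0$ and $\theta$.
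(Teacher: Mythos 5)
Your sketch follows the same Caccioppoli--Sobolev--Poincar\'e--Gehring scheme that underlies the paper's proof, which simply cites \cite[Lemma~3.8]{MR3758532} (the Reifenberg-flat version) and observes that the self-improvement step should be done via Proposition~\ref{181101@prop1} so that $q_0$ may be taken equal to the data exponent when the latter is close to $2$. The one point the paper leaves implicit but that you correctly flag as the delicate step is the localized Bogovski\u\i-type solvability needed for the pressure half of the boundary Caccioppoli inequality under only \eqref{180619@eq3} and Assumption~\ref{A1}.
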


\begin{proof}
For the proof of the lemma, we refer the reader to that of \cite[Lemma 3.8]{ MR3758532}, where the authors proved the reverse H\"older's inequality in a bounded Reifenberg flat domain.
The argument in the proof of \cite[Lemma 3.8]{ MR3758532} is general enough to allow domains  to satisfy   \eqref{180619@eq3} and Assumption \ref{A1}.

We note that our statement is slightly different from that of \cite[Lemma 3.8]{ MR3758532}.
In \cite[Lemma 3.8]{ MR3758532}, the exponent $q_0\in (2,q_1)$ depends also on $q_1$ under the assumption that the data are $q_1$-th integrable.
Indeed, if $q_1$ is sufficiently close to $2$, then $q_0$ can be chosen as $q_1$.
This follows by using Proposition \ref{181101@prop1} below instead of \cite[Proposition 3.7]{MR3758532}.
\end{proof}

\begin{proposition}		\label{181101@prop1}
Let $1<q<s$, $\Phi\in L^{q}(Q)$, and  $\Psi\in L^{s}(Q)$ be nonnegative functions, where $Q$ is a $d$-dimensional cube.
Suppose that
$$
\dashint_{B_R(x_0)} \Phi^{q}\,dx\le C_0 \bigg(\dashint_{B_{8R}(x_0) } \Phi\,dx\bigg)^{q}+\dashint_{B_{8R}(x_0)} \Psi^q\,dx+\delta \dashint_{B_{8R}(x_0)}\Phi^{q}\,dx
$$
for any $B_{8R}(x_0)\subset Q$ with $R\in (0, R_1]$,
where $R_1>0$, $C_0>1$, and $\delta \in [0,1)$.
Then there exist positive constants $\varepsilon$ and $C$, depending only on $d$, $q$, $C_0$, and $\delta$, such that
$$
\Phi\in L^{q_0}_{\rm{loc}}(Q)  \quad \forall q_0\in [q,\min\{s, q+\varepsilon\}]
$$
and
$$
\bigg(\dashint_{B_R(x_0)}\Phi^{q_0}\,dx\bigg)^{1/q_0}\le C\bigg(\dashint_{B_{8R}(x_0)}\Phi^{q}\,dx\bigg)^{1/q}+C\bigg(\dashint_{B_{8R}(x_0)} \Psi^{q_0}\,dx\bigg)^{1/q_0}
$$
for any $B_{8R}(x_0)\subset Q$ with $R\in (0, R_1]$.
\end{proposition}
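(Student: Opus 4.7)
The plan is to establish this as a Gehring-type self-improvement lemma in the style of Giaquinta--Modica, adapted to accommodate the absorbing term $\delta\dashint\Phi^{q}$. The argument proceeds by producing a good-$\lambda$ distributional inequality for $\Phi$ through a stopping-time decomposition, and then converting it to an $L^{q+\varepsilon}$ bound by a layer-cake integration.

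Fix a subcube $Q_0 \Subset Q$ and, for $t$ above a threshold $t_0$ comparable to $(\dashint_{Q_0}\Phi^q)^{1/q}$, perform a Calder\'on--Zygmund stopping-time decomposition of $\Phi^q$ at level $t^q$. This yields a disjoint family of balls $\{B_{r_i}(x_i)\}$ with $\dashint_{B_{r_i}(x_i)}\Phi^q \in (t^q, 2^d t^q]$, enlargements $B_{8r_i}(x_i) \subset Q$ still satisfying $\dashint_{B_{8r_i}(x_i)}\Phi^q \le t^q$, and $\Phi \le t$ almost everywhere outside $\bigcup_i B_{r_i}(x_i)$. On each $B_i := B_{r_i}(x_i)$ I apply the hypothesis at radius $r_i$ and split $\dashint_{B_{8r_i}}\Phi = \dashint_{B_{8r_i}}\Phi\,\chi_{\{\Phi \le \eta t\}} + \dashint_{B_{8r_i}}\Phi\,\chi_{\{\Phi > \eta t\}}$ for some $\eta \in (0,1)$. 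Using $(a+b)^q \le (1+\sigma)a^q + C_\sigma b^q$ and H\"older's inequality, then summing over $i$ with finite overlap of the enlargements yields, with $h(t) := \int_{\{x \in Q_0 : \Phi(x) > t\}}\Phi^q\,dx$,
\[
h(t) \le \bigl(A\eta^q + B\delta\bigr) h(c_0 t) + C \int_{\{\Psi > \eta t\}} \Psi^q\,dx, \qquad t > t_0,
\]
where $A$, $B$, and $c_0 \in (0,1)$ depend only on $d$, $q$, and $C_0$.

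Multiplying the above inequality by $t^{\varepsilon-1}$ and integrating in $t$, followed by a change of variable and the layer-cake identity $\int_0^\infty t^{\varepsilon-1} h(t)\,dt = \tfrac{1}{q+\varepsilon}\int_{Q_0} \Phi^{q+\varepsilon}\,dx$, produces an estimate of the form
\[
\int_{Q_0} \Phi^{q+\varepsilon}\,dx \le \bigl(A\eta^q + B\delta\bigr) c_0^{-\varepsilon} \int_{Q_0} \Phi^{q+\varepsilon}\,dx + C' \int_{Q} \Psi^{q+\varepsilon}\,dx + \text{(boundary terms in $t_0$)}.
\]
I would first choose $\eta$ small enough that $A\eta^q \le 1/4$ (with $\eta$ depending only on $C_0$, $d$, $q$), then choose $\varepsilon$ small enough that $B\delta\, c_0^{-\varepsilon} \le 1/4$ and $c_0^{-\varepsilon} \le 2$; the $\Phi^{q+\varepsilon}$ term on the right is then absorbed, and a standard covering of $Q$ by subcubes $Q_0$ converts the estimate into the claimed local bound, valid for any $q_0 \in [q, \min\{s, q+\varepsilon\}]$ by interpolation (or by using $\Psi \in L^s$ to control $\Psi^{q_0}$). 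The main obstacle is the term $\delta\dashint\Phi^q$ itself: unlike the classical Gehring setup, the reverse-H\"older inequality is not strict, so the absorption has to be carried inside the distributional inequality rather than being treated as a data term. This is precisely what forces $\varepsilon$ to depend on $\delta$ through the constraint $B\delta\, c_0^{-\varepsilon} < 1/2$, matching the dependence of $\varepsilon$ on $\delta$ asserted in the statement.
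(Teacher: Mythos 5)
The overall route you describe---Calder\'on--Zygmund stopping time, a good-$\lambda$ distributional inequality, and then a weighted layer-cake integration with absorption---is exactly the Gehring/Giaquinta--Modica argument that the paper delegates to \cite[Ch.V]{MR0717034}, so the approach matches. However, the key displayed inequality in your proposal is not what that argument produces, and the error is not cosmetic.

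The issue is the term coming from $\dashint_{B_{8r_i}}\Phi\,\chi_{\{\Phi>\eta t\}}$. On a stopping ball one only knows $\dashint_{B_{8r_i}}\Phi^q\le t^q$; the natural route is to write $\bigl(\dashint\Phi\bigr)^q\le\bigl(\dashint\Phi\bigr)^{q-1}\dashint\Phi\le t^{q-1}\dashint\Phi$ (by Jensen), which after splitting at level $\eta t$ and summing over the enlarged balls with finite overlap gives a contribution of the form $C\,t^{q-1}\int_{\{\Phi>\eta t\}}\Phi\,dx$. This is \emph{not} of the form $(A\eta^q+B\delta)h(c_0t)$: if you try to push it back into $h(\cdot)=\int_{\{\Phi>\cdot\}}\Phi^q$ via $\Phi^q\ge(\eta t)^{q-1}\Phi$ on $\{\Phi>\eta t\}$ or via H\"older against $\dashint\Phi^q\le t^q$, the coefficient blows up like $\eta^{-(q-1)}$ or $\eta^{-q(q-1)}$ as $\eta\to0$, which is irreconcilable with needing $\eta$ small to make the $A\eta^q$ term harmless. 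The correct good-$\lambda$ inequality therefore has \emph{three} distinct terms on the right: $\theta\, h(c_0 t)$ with $\theta=\theta(\eta,\delta,c_0)<1$, a term $C\,t^{q-1}\int_{\{\Phi>\eta t\}}\Phi\,dx$, and the $\Psi$-term. The first is absorbed via $\theta c_0^{-\varepsilon}<1$, as you describe; but the second is absorbed by an entirely different mechanism, namely that $\int_0^\infty t^{\varepsilon-1}h(t)\,dt=\tfrac1\varepsilon\int_{Q_0}\Phi^{q+\varepsilon}$ (note: $\tfrac1\varepsilon$, not $\tfrac1{q+\varepsilon}$ as you wrote) while $\int_0^\infty t^{q+\varepsilon-2}\int_{\{\Phi>\eta t\}}\Phi\,dx\,dt=\tfrac{\eta^{-(q+\varepsilon-1)}}{q+\varepsilon-1}\int_{Q_0}\Phi^{q+\varepsilon}$; the left coefficient diverges as $\varepsilon\to0$ while the right one stays bounded, so a sufficiently small $\varepsilon$ (depending on $d,q,C_0,\delta$) lets you absorb. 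Your misstated layer-cake constant is precisely what obscures this: with $\tfrac1{q+\varepsilon}$ on the left, the argument would not close. Finally, one should also insert the usual truncation (replace $\Phi$ by $\min\{\Phi,M\}$ and let $M\to\infty$) so that $\int\Phi^{q+\varepsilon}$ is a priori finite before you subtract it from both sides; this is implicit in Giaquinta's proof but omitted here.
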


\begin{proof}
See, for instance, \cite[Ch.V]{MR0717034} for the proof of the proposition.
\end{proof}

\begin{lemma}		\label{180913@lem1}
Let $\Omega$ be a Green domain in $\bR^2$ satisfying \eqref{180619@eq3}.
Let
$$
q_0=q_0(\lambda,K_0,\theta)>2
$$
be the constant from Lemma \ref{180723@lem1} under Assumption \ref{A1}.
If $(u, p)\in Y^{1,2}_0(\Omega)^d\times \tilde{L}^2(\Omega)$ satisfies
\begin{equation}		\label{181104@A1}
\left\{
\begin{aligned}
\cL u+\nabla p=f+D_\alpha f_\alpha &\quad \text{in }\, \Omega,\\
\operatorname{div} u=g &\quad \text{in }\, \Omega,\\
u=0 &\quad \text{on }\, \partial \Omega,
\end{aligned}
\right.
\end{equation}
where $f\in L^{2q_0/(2+q_0)}(\Omega)^2$, $f_\alpha\in L^{q_0}(\Omega)^2$, and $g\in \tilde{L}^{q_0}(\Omega)$, then we have
$$
\|Du\|_{L^{q_0}(\Omega)}+\|p\|_{L^{q_0}(\Omega)}\le C\big(\|f\|_{L^{2q_0/(2+q_0)}(\Omega)}+\|f_\alpha\|_{L^{q_0}(\Omega)}+\|g\|_{L^{q_0}(\Omega)}\big),
$$
where $C=C(\lambda, K_0,\theta)$.
\end{lemma}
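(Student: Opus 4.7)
The plan is to reduce the $L^{q_0}$ estimate to the reverse H\"older inequality of Lemma \ref{180723@lem1}, whose right-hand side accommodates only divergence-form data. I would first rewrite the zeroth-order source $f$ as the divergence of an $L^{q_0}$ vector field, and then upgrade the resulting reverse H\"older bound into a global $L^{q_0}(\Omega)$ estimate, handling the cases $|\Omega|=\infty$ and $|\Omega|<\infty$ by slightly different limiting arguments.

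\textbf{Step 1 (absorbing $f$ into divergence form).} I would extend $f$ by zero to $\bR^2$ and set $F_\alpha := \partial_\alpha(\Gamma * f)$, where $\Gamma(x)=-\frac{1}{2\pi}\log|x|$ is the fundamental solution of $-\Delta$ on $\bR^2$. Since $|\nabla\Gamma(x)|\le C|x|^{-1}$, the Hardy--Littlewood--Sobolev inequality (the Riesz potential $I_1$ maps $L^{2q_0/(q_0+2)}(\bR^2)\to L^{q_0}(\bR^2)$, valid because $q_0>2$) gives
$$
\|F_\alpha\|_{L^{q_0}(\bR^2)}\le C\|f\|_{L^{2q_0/(q_0+2)}(\Omega)},
$$
while $D_\alpha F_\alpha=\Delta(\Gamma*f)=f$ as distributions on $\bR^2$. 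Testing the original system against $\phi\in C^\infty_0(\Omega)^2$ and integrating by parts then shows that $(u,p)$ is a weak solution of
$$
\cL u+\nabla p=D_\alpha(f_\alpha+F_\alpha),\qquad \operatorname{div}u=g,\qquad u=0 \text{ on } \partial\Omega,
$$
with the new divergence-form data $\tilde f_\alpha := f_\alpha+F_\alpha\in L^{q_0}(\Omega)^2$ and no zeroth-order source.

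\textbf{Step 2 (globalizing the reverse H\"older bound).} I would then apply Lemma \ref{180723@lem1} to the modified system. Multiplying the averaged inequality by $|\Omega_{R/2}(x_0)|^{1/q_0}$ and using $|\Omega_{R/2}|\le|\Omega_R|$ yields, for any $x_0\in\Omega$ and $R>0$,
$$
\|(Du,p)\|_{L^{q_0}(\Omega_{R/2}(x_0))}\le C|\Omega_R(x_0)|^{1/q_0-1/2}\|(Du,p)\|_{L^2(\Omega_R(x_0))}+C\|(\tilde f_\alpha,g)\|_{L^{q_0}(\Omega_R(x_0))}.
$$
If $|\Omega|=\infty$, I would fix $x_0$ and let $R\to\infty$: the hypothesis $(Du,p)\in L^2(\Omega)\times\tilde L^2(\Omega)$ together with $|\Omega_R(x_0)|\to\infty$ makes the first right-hand term vanish (since $q_0>2$), while monotone convergence on the left gives $\|(Du,p)\|_{L^{q_0}(\Omega)}\le C\|(\tilde f_\alpha,g)\|_{L^{q_0}(\Omega)}$. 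If $|\Omega|<\infty$, I would instead take $R$ so large that $\Omega_R(x_0)=\Omega$; the surviving $L^2$ term is then controlled by Lemma \ref{180108@lem1} applied in the divergence-form case (no $f$-source, so the constant depends only on $\lambda,K_0$), giving $\|(Du,p)\|_{L^2(\Omega)}\le C|\Omega|^{1/2-1/q_0}\|(\tilde f_\alpha,g)\|_{L^{q_0}(\Omega)}$, and the volume factors cancel exactly.

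Combining the two cases with the Step 1 bound on $\|F_\alpha\|_{L^{q_0}}$ would then give the claimed inequality. The main obstacle is keeping all constants independent of $\Omega$: in Step 1 it is essential to work with $\nabla\Gamma$ (a Riesz-type kernel of order $-1$) rather than $\Gamma$ itself, since the logarithmic 2D Newtonian potential $\Gamma*f$ is not globally in any $L^{q_0}$; in Step 2 one must carefully invoke the \emph{divergence-form} version of Lemma \ref{180108@lem1} in the bounded case, thereby avoiding the $\Omega$-dependent constant $C_q$ that appears only when a genuine $f$-source is present.
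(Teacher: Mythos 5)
Your proof is correct and follows essentially the paper's strategy: absorb the zeroth-order source $f$ into divergence-form data by solving the divergence equation in $\bR^2$ (you construct $F_\alpha$ via the Newtonian potential and Hardy--Littlewood--Sobolev, where the paper simply cites \cite[Lemma 3.2]{MR3670039}), then globalize the reverse H\"older estimate of Lemma \ref{180723@lem1}, invoking the $L^2$ energy bound of Lemma \ref{180108@lem1} together with H\"older's inequality to dispose of the $L^2$ term when $|\Omega|<\infty$. The only slip is a sign: with $\Gamma$ the fundamental solution of $-\Delta$ one has $\Delta(\Gamma*f)=-f$, so one should take $F_\alpha=-\partial_\alpha(\Gamma*f)$; this is trivially fixed.
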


\begin{proof}
If $f\equiv 0$, then the lemma follows by letting $R\to \infty$ in \eqref{181026@eq1}.
On the other hand, if $f\not\equiv 0$, then by the existence of solutions to the divergence equation in the whole space (see, for instance, \cite[Lemma 3.2]{MR3670039}), there exist functions $F_\alpha\in Y^{2q_0/(2+q_0)}(\Omega)^2$, $\alpha\in \{1,2\}$, satisfying
$$
D_\alpha F_\alpha=f \chi_{\Omega} \quad \text{in }\, \bR^2
$$
and
\begin{equation}		\label{181104@B1}
\|F_\alpha\|_{L^{q_0}(\bR^2)}+\|DF_\alpha\|_{L^{2q_0/(2+q_0)}(\bR^2)}\le C \|f\|_{L^{2q_0/(2+q_0)}(\Omega)},
\end{equation}
where $C$ is an universal constant.
Since $(u, p)$ satisfies \eqref{181104@A1} with $D_\alpha (F_\alpha+f_\alpha)$ in place of $f+D_\alpha f_\alpha$, we get the desired estimate from \eqref{181104@B1}.
The lemma is proved.
\end{proof}

\section{Approximated Green function}		\label{S_Ap}	
Hereafter in the paper, we shall use the following notation.

\begin{notation}
For a given function $f$, if there is a continuous version of $f$, that is, there is a continuous function $\tilde{f}$ such that $\tilde{f}=f$ in the almost everywhere sense, then we replace $f$ with $\tilde{f}$ and denote the version again by $f$.
\end{notation}

In this section, we assume that the hypotheses in Theorem \ref{M1} hold.
Under these hypotheses, we shall construct approximated Green functions for the pressure of the Stokes system.
We recall the notation that if $|\Omega|=\infty$, then $1/|\Omega|=0$ and
$(u)_\Omega=0$ for any $u\in L^q(\Omega)$.

Let us fix a smooth function $\Phi$ defined on $\bR^d$ such that
$$
0\le \Phi\le 2, \quad \operatorname{supp}\Phi\subset B_1(0),\quad  \int_{\bR^d}\Phi\,dx=1.
$$
For $y\in \Omega$ and $\varepsilon\in (0,1]$, we define
$$
\Phi_{\varepsilon,y}(x)=\varepsilon^{-d}\Phi((x-y)/\varepsilon).
$$
By Lemma \ref{180108@lem1}, there exists a unique $(\cG_\varepsilon(\cdot,y),\cP_\varepsilon(\cdot,y))\in Y^{1,2}_0(\Omega)^d\times \tilde{L}^2(\Omega)$ satisfying
\begin{equation}		\label{180110@eq2}
\left\{
\begin{aligned}
\cL \cG_\varepsilon(\cdot,y) +\nabla \cP_\varepsilon(\cdot,y)=0 &\quad \text{in }\, \Omega,\\
\operatorname{div} \cG_\varepsilon(\cdot,y)=\Phi_{\varepsilon,y}-(\Phi_{\varepsilon,y})_\Omega &\quad \text{in }\, \Omega,\\
\cG_{\varepsilon}(\cdot,y)=0 &\quad \text{on }\, \partial \Omega.
\end{aligned}
\right.
\end{equation}
Moreover, using the fact that
$$
\|\Phi_{\varepsilon,y}-(\Phi_{\varepsilon,y})_{\Omega}\|_{L^2(\Omega)}\le 2 \|\Phi_{\varepsilon,y}\|_{L^2(\Omega)}\le C\varepsilon^{-d/2},
$$
we have
\begin{equation}		\label{180110@eq4}
\|D\cG_\varepsilon(\cdot,y)\|_{L^2(\Omega)}+\|\cP_\varepsilon(\cdot,y)\|_{L^2(\Omega)}
\le C\varepsilon^{-d/2},
\end{equation}
where $C=C(d,\lambda, K_0)$.
Throughout the paper, we call $(\cG_\varepsilon(\cdot,y), \cP_\varepsilon(\cdot,y))$ {\emph{the approximated Green function for the pressure of $\cL$ in $\Omega$}}.

In the lemma below, we prove a $L^1$-estimate for $\cG_{\varepsilon}(\cdot,y)$.

\begin{lemma}		\label{180110@lem1}
Let $y\in \Omega$ and $\varepsilon\in (0,1]$.
Then for any $x\in \Omega$ and $R\in (0, d_{x}^*]$, we have
$$
\|\cG_\varepsilon(\cdot,y)\|_{L^1(B_R(x))} \le C R,
$$
where $C=C(d,\lambda, \omega_A, K_0)$ and $C$ depends also on $\theta$ when $d=2$.
\end{lemma}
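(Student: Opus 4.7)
The plan is to prove this $L^1$-bound by a duality argument against the adjoint Stokes system, reducing matters to an $L^\infty$-bound on the adjoint pressure.

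\medskip

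\textbf{Step 1 (Duality reduction).} By the $L^1$--$L^\infty$ duality, it suffices to show that for every $\psi\in L^\infty(\Omega)^d$ with $\operatorname{supp}\psi\subset B_R(x)$ and $\|\psi\|_{L^\infty(\Omega)}\le 1$, we have
$$
\left|\int_\Omega \psi\cdot \cG_\varepsilon(\cdot,y)\,dx\right|\le CR.
$$

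\textbf{Step 2 (Solve the adjoint problem).} I would invoke Lemma \ref{180108@lem1} (together with Lemma \ref{180913@lem1} when $d=2$, to accommodate the two-dimensional integrability restriction on $f$) to produce a unique pair
$(v,q)\in Y^{1,2}_0(\Omega)^d\times \tilde{L}^2(\Omega)$ solving
$$
\cL^* v+\nabla q=\psi,\quad \operatorname{div} v=0\ \text{in}\ \Omega,\quad v=0\ \text{on}\ \partial\Omega,
$$
with the corresponding $L^2$ bound on $Dv$ and $q$ in terms of $\|\psi\|_{L^{2d/(d+2)}}\le CR^{(d+2)/2}$.

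\textbf{Step 3 (Pairing identity).} Testing \eqref{180110@eq2} against $v$ (via density of $C^\infty_0$ in $Y^{1,2}_0$) gives $\int A^{\alpha\beta}D_\beta\cG_\varepsilon\cdot D_\alpha v\,dx=0$, because $\operatorname{div}v=0$. Testing the adjoint equation against $\cG_\varepsilon(\cdot,y)$ gives
$$
\int_\Omega A^{\alpha\beta}D_\beta\cG_\varepsilon\cdot D_\alpha v\,dx+\int_\Omega q\,\operatorname{div}\cG_\varepsilon\,dx=-\int_\Omega \psi\cdot \cG_\varepsilon(\cdot,y)\,dx.
$$
Combining the two identities and using $\operatorname{div}\cG_\varepsilon(\cdot,y)=\Phi_{\varepsilon,y}-(\Phi_{\varepsilon,y})_\Omega$ together with $(q)_\Omega=0$ yields
$$
\int_\Omega \psi\cdot \cG_\varepsilon(\cdot,y)\,dx=-\int_\Omega q\,\Phi_{\varepsilon,y}\,dx.
$$

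\textbf{Step 4 ($L^\infty$-bound on $q$).} Since $\|\Phi_{\varepsilon,y}\|_{L^1(\Omega)}=1$ and $\operatorname{supp}\Phi_{\varepsilon,y}\subset B_\varepsilon(y)$, it is enough to prove
$$
\|q\|_{L^\infty(B_\varepsilon(y))}\le CR.
$$
Heuristically this matches the expected scaling: an $L^\infty$ source of support radius $R$ should, through the Stokes system, produce a pressure of size $R$, in parallel with the $|x-y|^{1-d}$ order of the pressure fundamental solution. Rigorously, I would combine the global $L^2$ bound $\|q\|_{L^2(\Omega)}\le CR^{(d+2)/2}$ from Step 2 with the interior $L^\infty$-type estimates for Stokes systems with (partially) Dini mean oscillation coefficients proved in the Appendix, passing from $L^2$ averages to pointwise bounds on $q$ via the regularity of the homogeneous adjoint system away from $\operatorname{supp}\psi\subset B_R(x)$.

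\medskip

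The main obstacle lies in Step 4. The Stokes interior estimates typically provide oscillation bounds $\|q-(q)_{B_r}\|_{L^\infty(B_{r/2})}$ rather than absolute pointwise control, so propagating the mean-zero normalization $(q)_\Omega=0$ to give an absolute bound $\lesssim R$ at the scale $B_\varepsilon(y)$ requires a careful scale-by-scale iteration. Extra subtlety appears when $d=2$, where the logarithmic behavior of the pressure at infinity must be tamed by the Green-domain hypothesis and the exterior measure condition \eqref{180619@eq3}; this is why the constant $C$ is allowed to depend on $\theta$ in that case.
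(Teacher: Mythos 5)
Your plan follows the paper's proof in its essentials for the regime $2\varepsilon\le R$: choose a dual test function supported in $B_R(x)$, solve the adjoint Stokes problem for $(v,q)$, derive the pairing identity $\int_\Omega\psi\cdot\cG_\varepsilon=-\int_\Omega q\,\Phi_{\varepsilon,y}$, and finish by bounding the adjoint pressure pointwise near $y$. (The paper simply chooses the extremal $\psi=\chi_{B_R(x)}(\operatorname{sgn}\cG_\varepsilon^1,\ldots,\operatorname{sgn}\cG_\varepsilon^d)^\top$ rather than running a duality over all $\psi$; the two are equivalent.)

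Two comments. First, the ``main obstacle'' you flag in Step~4 is not really there: Lemma~\ref{180108@lem3} (estimate~\eqref{180110@eq8}) provides an \emph{absolute} $L^\infty$ bound on $p$ in terms of $R^{-d}\bigl(\|Du\|_{L^1(B_R)}+\|p\|_{L^1(B_R)}\bigr)+R\|f\|_{L^\infty(B_R)}$, not merely an oscillation bound. Since $(q)_\Omega=0$ and $\|q\|_{L^2(\Omega)}$ is globally controlled by Lemma~\ref{180108@lem1} (or Lemma~\ref{180913@lem1} when $d=2$), one application of that lemma at scale $R$ about $y$ gives $\|q\|_{L^\infty(B_{R/2}(y))}\le CR$ directly, with no scale-by-scale iteration needed. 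This is exactly what the paper does.

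Second, and more importantly, Step~4 as stated is incomplete: it implicitly assumes $\operatorname{supp}\Phi_{\varepsilon,y}\subset B_\varepsilon(y)$ sits inside the ball on which the $L^\infty$ estimate holds, i.e.\ $\varepsilon\lesssim R$. When $2\varepsilon>R$ this breaks down, and the paper treats that case separately (for $d\ge 3$ by H\"older, Sobolev, and the raw bound~\eqref{180110@eq4}; for $d=2$ by a second duality against $(D_\alpha f_\alpha,g)$ combined with Lemma~\ref{180913@lem1}). Within your own framework the large-$\varepsilon$ case is actually easy to patch: by H\"older,
\[
\Bigl|\int_\Omega q\,\Phi_{\varepsilon,y}\Bigr|\le\|q\|_{L^2(\Omega)}\|\Phi_{\varepsilon,y}\|_{L^2}\le C R^{(d+2)/2}\varepsilon^{-d/2}\le CR
\]
once $\varepsilon\ge R/2$, and similarly with $L^{q_0}$ norms when $d=2$ — so no pointwise bound on $q$ is needed when $\varepsilon$ is comparable to or larger than $R$. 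You should split into the two regimes and record this, otherwise the proof of Step~4 is a genuine gap.

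Your remarks on the $d=2$ subtleties (Green-domain hypothesis, exterior measure condition, dependence on $\theta$) are accurate and match the paper's use of Lemmas~\ref{180723@lem1} and~\ref{180913@lem1}.
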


\begin{proof}
We consider the following two cases:
$$
2\varepsilon\le R \quad \text{and}\quad 2\varepsilon>R.
$$
\begin{enumerate}[i.]
\item
$2\varepsilon\le R$:
Set
$$
f=\chi_{B_R(x)}\big(\operatorname{sgn}\cG_\varepsilon^1(\cdot,y), \ldots, \operatorname{sgn}\cG_\varepsilon^d(\cdot,y)\big)^{\top}.
$$
Then by Lemma \ref{180108@lem1}, there exists a unique $(u,p)\in Y^{1,2}_0(\Omega)^d\times \tilde{L}^2(\Omega)$ satisfying
\begin{equation}		\label{180110@eq4a}
\left\{
\begin{aligned}
\cL^* u+\nabla p=-f &\quad \text{in }\, \Omega,\\
\operatorname{div} u=0 &\quad \text{in }\, \Omega,\\
u=0 &\quad \text{on }\, \partial \Omega.
\end{aligned}
\right.
\end{equation}
We apply $u$ and $\cG_\varepsilon(\cdot,y)$ as test functions to  \eqref{180110@eq2} and \eqref{180110@eq4a}, respectively, to get
$$
\int_\Omega f \cdot \cG_\varepsilon(\cdot,y)\,dx=\int_\Omega p\,  \Phi_{\varepsilon,y}\,dx,
$$
which implies that (using $\varepsilon\le R/2$)
$$
\|\cG_\varepsilon(\cdot,y)\|_{L^1(B_{R}(x))}\le \|p\|_{L^\infty(B_{R/2}(y))}.
$$
Hence, we get from \eqref{180110@eq8} that
\begin{equation}		\label{180726@eq1}
\|\cG_{\varepsilon}(\cdot,y)\|_{L^1(B_{R}(x))} \le C R^{-d} \big(\|Du\|_{L^1(B_R(y))}+\|p\|_{L^1(B_R(y))}\big)+CR,
\end{equation}
where $C=C(d,\lambda, \omega_A)$.
If $d\ge 3$, then by \eqref{190101@eq1}, we have
$$
\|Du\|_{L^2(\Omega)}+\|p\|_{L^2(\Omega)}\le C R^{(d+2)/2} ,
$$
where $C=C(d,\lambda, K_0)$.
From this together with \eqref{180726@eq1} and H\"older's inequality, we get the desired estimate.
On the other hand, if $d=2$, then by Lemma \ref{180913@lem1} applied to  \eqref{180110@eq4a}, we have
$$
\|Du\|_{L^{q_0}(\Omega)}+\|p\|_{L^{q_0}(\Omega)}\le C\|f\|_{L^{2q_0/(2+q_0)}(\Omega)}\le CR^{(2+q_0)/q_0},
$$
where $C=C(\lambda, K_0,\theta)$ and $q_0=q_0( \lambda, K_0,\theta)>2$.
Thus, from \eqref{180726@eq1} and H\"older's inequality, we get the desired estimate.
\item
$2\varepsilon>R$:
If $d\ge 3$, then by H\"older's inequality, the Sobolev inequality, and \eqref{180110@eq4}, we have
$$
\begin{aligned}
&\|\cG_{\varepsilon}(\cdot,y)\|_{L^1(B_R(x))}
\le CR^{(d+2)/2}\|\cG_{\varepsilon}(\cdot,y)\|_{L^{2d/(d-2)}(B_R(x))}\\
&\le CR^{(d+2)/2}\|D\cG_{\varepsilon}(\cdot,y)\|_{L^2(\Omega)}\le CR^{(d+2)/2}\varepsilon^{-d/2}\le CR,
\end{aligned}
$$
which gives the desired estimate.

Now we assume $d=2$.
Let $f_\alpha\in C^\infty_0(\Omega)^2$ and $g\in C^\infty_0(\Omega)$.
Then by Lemma \ref{180108@lem1},
there exists a unique $(v, \pi)\in Y^{1,2}_0(\Omega)^2\times \tilde{L}^2(\Omega)$ satisfying
\begin{equation}		\label{180906@eq3}
\left\{
\begin{aligned}
\cL^* v+\nabla \pi=D_\alpha f_\alpha &\quad \text{in }\, \Omega,\\
\operatorname{div} v=g-(g)_{\Omega} &\quad \text{in }\, \Omega,\\
u=0 &\quad \text{on }\, \partial \Omega.
\end{aligned}
\right.
\end{equation}
We also get from Lemma \ref{180913@lem1} that
\begin{equation}		\label{180906@eq3a}
\|Dv\|_{L^{q_0}(\Omega)}+\|\pi\|_{L^{q_0}(\Omega)}
\le C\big(\|f_\alpha\|_{L^{q_0}(\Omega)}+\|g\|_{L^{q_0}(\Omega)}\big),
\end{equation}
where $C=C( \lambda, K_0, \theta)$ and $q_0=q_0(\lambda,K_0, \theta)>2$.
We test \eqref{180110@eq2} and \eqref{180906@eq3} with $v$ and $\cG_{\varepsilon}(\cdot,y)$, respectively, to obtain
$$
\int_{\Omega} \big(f_\alpha \cdot D_\alpha \cG_{\varepsilon}(\cdot,y)+g \cP_{\varepsilon}(\cdot,y)\big)\,dz=\int_{B_{\varepsilon}(y)} \pi \Phi_{\varepsilon, y}\,dz.
$$
Then by H\"older's inequality, \eqref{180906@eq3a}, and $R<2\varepsilon\le 2$, we have
$$
\begin{aligned}
\bigg|\int_{\Omega} \big(f_\alpha \cdot D_\alpha \cG_{\varepsilon}(\cdot,y)+g \cP_{\varepsilon}(\cdot,y)\big)\,dz\bigg|
& \le \|\Phi_{\varepsilon,y}\|_{L^{q_0/(q_0-1)}(B_\varepsilon(y))}\|\pi\|_{L^{q_0}(\Omega)} \\
&\le CR^{-2/q_0}\big(\|f_\alpha\|_{L^{q_0}(\Omega)}+\|g\|_{L^{q_0}(\Omega)}\big).
\end{aligned}
$$
Since the above inequality holds for all $f_\alpha\in C^\infty_0(\Omega)^2$ and $g\in C^\infty_0(\Omega)$, we get
$$
\|D\cG_{\varepsilon}(\cdot,y)\|_{L^{q_0/(q_0-1)}(\Omega)}+\|\cP_{\varepsilon}(\cdot,y)\|_{L^{q_0/(q_0-1)}(\Omega)} \le CR^{-2/q_0},
$$
and thus, we obtain by the Sobolev inequality that
$$
\|\cG_{\varepsilon}(\cdot,y)\|_{L^{2q_0/(q_0-2)}(\Omega)}\le CR^{-2/q_0}.
$$
This together with H\"older's inequality yields the desired estimate.
\end{enumerate}
The lemma is proved.
\end{proof}

We establish the following estimates  uniformly in $\varepsilon\in (0, 1]$.

\begin{lemma}		\label{180719@lem1}
Let $y\in \Omega$, $\varepsilon\in (0,1]$,  $R\in (0, d_y^*]$, and
$$
q^*=\frac{2d}{d-2} \quad \text{if }\, d\ge 3, \quad q^*=\infty \quad \text{if }\, d=2.
$$
Then we have
\begin{equation}		\label{180618@A1a}
\|\cG_\varepsilon(\cdot,y)\|_{L^{q^*}(\Omega\setminus \overline{B_R(y)})}\le C R^{-d/2}
\end{equation}
and
\begin{equation}		\label{180618@A1}
\|D\cG_\varepsilon(\cdot,y)\|_{L^2(\Omega\setminus \overline{B_R(y)})}+\|\cP_\varepsilon(\cdot,y)\|_{L^2(\Omega\setminus \overline{B_R(y)})} \le C'R^{-d/2},
\end{equation}
where
$(C,C')=(C,C')(d,\lambda, \omega_A, K_0)$ and $C$ depends also on $\theta$ when $d=2$.
\end{lemma}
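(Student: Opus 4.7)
The plan is to establish both bounds by duality, coupling Lemma~\ref{180108@lem1} (and Lemma~\ref{180913@lem1} in dimension two) with the interior $L^\infty$ estimate for Stokes systems with partially Dini mean oscillation coefficients that is developed in the Appendix (building on \cite{MR3912724, MR3890946}). I split into the two regimes $2\varepsilon > R$ and $2\varepsilon \le R$.

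When $2\varepsilon > R$, one has $\varepsilon^{-d/2} \le 2^{d/2} R^{-d/2}$, so the uniform bound \eqref{180110@eq4} already delivers \eqref{180618@A1}, and the Sobolev inequality $Y^{1,2}_0(\Omega) \hookrightarrow L^{q^*}(\Omega)$ gives \eqref{180618@A1a} for $d \ge 3$. In the borderline case $d = 2$ with $q^* = \infty$, I would first apply Lemma~\ref{180913@lem1} to \eqref{180110@eq2} to upgrade the integrability of $D\cG_\varepsilon(\cdot,y)$ and $\cP_\varepsilon(\cdot,y)$ to $L^{q_0}(\Omega)$ for some $q_0 > 2$, and then pass to an $L^\infty$-bound on $\Omega \setminus \overline{B_R(y)}$ via a Sobolev--Morrey embedding, relying on the Green-domain structure and the measure density condition \eqref{180619@eq3}.

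When $2\varepsilon \le R$, the key is a duality identity. For \eqref{180618@A1a}, pick $f \in C_0^\infty(\Omega \setminus \overline{B_R(y)})^d$ with $\|f\|_{L^{(q^*)'}(\Omega)} \le 1$ and use Lemma~\ref{180108@lem1} (resp.\ Lemma~\ref{180913@lem1} when $d = 2$) to produce $(u, p) \in Y^{1,2}_0(\Omega)^d \times \tilde{L}^2(\Omega)$ solving $\cL^* u + \nabla p = f$, $\operatorname{div} u = 0$ in $\Omega$, $u = 0$ on $\partial \Omega$, with $\|Du\|_{L^2(\Omega)} + \|p\|_{L^2(\Omega)} \le C$. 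Testing the weak formulation of \eqref{180110@eq2} against $u$ and that of the dual system against $\cG_\varepsilon(\cdot,y)$, subtracting, and using $\operatorname{div} u = 0$, $\operatorname{div} \cG_\varepsilon(\cdot,y) = \Phi_{\varepsilon,y} - (\Phi_{\varepsilon,y})_\Omega$, together with the mean-zero normalization of $p$ (which kills the term involving $(\Phi_{\varepsilon,y})_\Omega$), I obtain the identity
\[
\int_\Omega f \cdot \cG_\varepsilon(\cdot,y)\, dx = -\int_\Omega p\, \Phi_{\varepsilon,y}\, dx.
\]
Since $f \equiv 0$ on $B_R(y) \subset \Omega$, the dual pair $(u, p)$ satisfies the homogeneous Stokes system in $B_R(y)$. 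The interior $L^\infty$-estimate then yields $\|p - p_{B_{R/2}(y)}\|_{L^\infty(B_{R/2}(y))} \le C R^{-d/2}$, and the global $L^2$-bound on $p$ controls $|p_{B_{R/2}(y)}| \le C R^{-d/2}$; combining with $\operatorname{supp}\Phi_{\varepsilon,y} \subset B_\varepsilon(y) \subset B_{R/2}(y)$ and $\int \Phi_{\varepsilon,y}\, dx = 1$ produces $|\int_\Omega p\, \Phi_{\varepsilon,y}\, dx| \le C R^{-d/2}$, and a supremum over $f$ delivers \eqref{180618@A1a}.

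The bound \eqref{180618@A1} is handled by the same scheme using two tailored dual systems: $\cL^* u + \nabla p = D_\alpha f_\alpha$ with $\operatorname{div} u = 0$ and $f_\alpha \in C_0^\infty(\Omega \setminus \overline{B_R(y)})^d$ of unit $L^2$-norm for the $D\cG_\varepsilon$-piece, and $\cL^* u + \nabla p = 0$ with $\operatorname{div} u = g - (g)_\Omega$ for $g \in C_0^\infty(\Omega \setminus \overline{B_R(y)})$ for the $\cP_\varepsilon$-piece. In each instance the duality calculation reduces to controlling $|\int_\Omega p\, \Phi_{\varepsilon,y}\, dx|$, for which the interior-pressure estimate applies verbatim; note that in the $\cP_\varepsilon$-piece with $|\Omega| < \infty$, the divergence of the dual velocity on $B_R(y)$ is a constant $-(g)_\Omega$ rather than zero, but this smooth inhomogeneity is absorbed by the same interior estimate. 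The main technical difficulty I foresee is the $d = 2$ branch of the large-$\varepsilon$ regime, where passing from $W^{1, q_0}$ to $L^\infty$ on the unbounded complement $\Omega \setminus \overline{B_R(y)}$ is delicate and demands careful use of the Green-domain hypothesis together with \eqref{180619@eq3}; the remaining bookkeeping—pressure constants and the dichotomy between $|\Omega| < \infty$ and $|\Omega| = \infty$—should be routine.
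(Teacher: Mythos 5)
Your argument for \eqref{180618@A1} is sound and close to the paper's: the paper's version is the special case of your duality obtained by choosing $f_\alpha=\chi_{\Omega\setminus\overline{B_R(y)}}D_\alpha\cG_\varepsilon(\cdot,y)$ and $g=\chi_{\Omega\setminus\overline{B_R(y)}}\cP_\varepsilon(\cdot,y)$, which collapses the pairing into the single quadratic identity \eqref{180201@B3} and handles both terms at once. Your duality proof of \eqref{180618@A1a} for $d\ge 3$ (solving $\cL^*u+\nabla p=f$, $\operatorname{div}u=0$ with $\|f\|_{L^{2d/(d+2)}}\le 1$ and invoking the interior pressure estimate on $B_{R/2}(y)$) is also correct and is a genuinely different, clean route: the paper instead derives \eqref{180618@A1a} from the already-proved \eqref{180618@A1} via the Sobolev inequality applied to $(1-\eta)\cG_\varepsilon(\cdot,y)$ together with the $L^1$-bound of Lemma \ref{180110@lem1} for the Poincar\'e constant. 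You never cite Lemma \ref{180110@lem1}; for $d\ge 3$ your route avoids it.

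The genuine gap is in \eqref{180618@A1a} for $d=2$ in the regime $2\varepsilon\le R$. There $q^*=\infty$, so $(q^*)'=1$, and your duality requires solving $\cL^*u+\nabla p=f$ with a uniform bound $\|Du\|_{L^2(\Omega)}+\|p\|_{L^2(\Omega)}\le C$ for all $f\in C^\infty_0(\Omega\setminus\overline{B_R(y)})^2$ with $\|f\|_{L^1}\le 1$. Neither Lemma \ref{180108@lem1} nor Lemma \ref{180913@lem1} provides this: in Lemma \ref{180108@lem1} for $d=2$ the constant $C_q$ depends on $\operatorname{supp}f$ and requires $q>1$, while Lemma \ref{180913@lem1} needs $f\in L^{2q_0/(2+q_0)}$ with $q_0>2$. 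A uniform $L^1\to W^{1,2}$ bound does not hold for the Stokes system, so the duality argument does not close at this endpoint. The paper avoids this by not splitting \eqref{180618@A1a} by $\varepsilon$ at all: for $d=2$ it fixes $x\in\Omega\setminus\overline{B_R(y)}$, localizes $(\cG_\varepsilon,\cP_\varepsilon)$ near $x$ (two cases depending on whether $B_{R/8}(x)$ meets $\partial\Omega$), applies Lemma \ref{180913@lem1} to the localized system whose data are controlled via \eqref{180618@A1} and Poincar\'e, and then closes with the Morrey embedding together with Lemma \ref{180110@lem1} to control the subtracted average. You should replace the $d=2$ branch of your small-$\varepsilon$ duality with a localization argument of this kind; this is also where the Green-domain hypothesis and the measure-density condition \eqref{180619@eq3} actually enter, through Lemma \ref{180913@lem1} and the boundary Poincar\'e inequality.
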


\begin{proof}
We first prove the estimate \eqref{180618@A1}.
If $\varepsilon > R/2$, then \eqref{180618@A1} follows immediately from \eqref{180110@eq4}.
Now we assume $\varepsilon \le R/2$.
Set
$$
f_\alpha=\chi_{\Omega\setminus \overline{B_R(y)}} D_\alpha \cG_\varepsilon(\cdot,y), \quad g=\chi_{\Omega\setminus \overline{B_R(y)}} \cP_\varepsilon(\cdot,y).
$$
Then by Lemma \ref{180108@lem1}, there exists a unique
 $(u,p)\in Y^{1,2}_0(\Omega)^d\times \tilde{L}^2(\Omega)$ satisfying
\begin{equation}		\label{180201@B2}
\left\{
\begin{aligned}
\cL^* {u}+\nabla {p}=D_\alpha f_\alpha &\quad \text{in }\, \Omega,\\
\operatorname{div} {u}=g-(g)_\Omega &\quad \text{in }\, \Omega,\\
u=0 &\quad \text{on }\, \partial \Omega.
\end{aligned}
\right.
\end{equation}
Moreover, we have
\begin{align}
\nonumber
&\|Du\|_{L^2(\Omega)}+\|p\|_{L^2(\Omega)}\\
\nonumber
&\le C\big(\|f_\alpha\|_{L^2(\Omega)}+\|g\|_{L^2(\Omega)}\big)\\
\label{180702@A2}
&\le C\big(\|D\cG_{\varepsilon}(\cdot,y)\|_{L^2(\Omega\setminus \overline{B_R(y)})}+\|\cP_{\varepsilon}(\cdot,y)\|_{L^2(\Omega\setminus \overline{B_R(y)})}\big),
\end{align}
where $C=C(d,\lambda, K_0)$.
Since $(u,p)$ satisfies
$$
\left\{
\begin{aligned}
\cL^* {u}+\nabla {p}=0 &\quad \text{in }\, B_R(y),\\
\operatorname{div} {u}=-(g)_\Omega &\quad \text{in }\, B_{R}(y),
\end{aligned}
\right.
$$
by \eqref{180110@eq8} and H\"older's inequality, we obtain
\begin{equation}		\label{180703@eq1}
\|p\|_{L^\infty(B_{R/2}(y))}
\le C R^{-d/2}\big(\|Du\|_{L^2(\Omega)}+\|p\|_{L^2(\Omega)}\big)+C|\Omega|^{-1/2}\|g\|_{L^2(\Omega)},
\end{equation}
where $C=C(d,\lambda, \omega_A)$.
Combining \eqref{180702@A2} and \eqref{180703@eq1}, and using $R^d\le C(d)|\Omega|$,
  we see that
\begin{equation}		\label{180702@A3}
\|p\|_{L^\infty(B_{R/2}(y))}
\le C R^{-d/2}\big(\|D\cG_\varepsilon(\cdot,y)\|_{L^2(\Omega\setminus \overline{B_R(y)})}+\|\cP_\varepsilon(\cdot,y)\|_{L^2(\Omega\setminus \overline{B_R(y)})}\big),
\end{equation}
where $C=C(d,\lambda, \omega_A,K_0)$.
We apply $u$ and $\cG_{\varepsilon}(\cdot,y)$ as test functions to \eqref{180110@eq2} and \eqref{180201@B2}, respectively, to get
\begin{equation}		\label{180201@B3}
\|D \cG_\varepsilon(\cdot,y)\|_{L^2(\Omega\setminus \overline{B_R(y)})}^2+\|\cP_\varepsilon(\cdot,y)\|_{L^2(\Omega\setminus \overline{B_R(y)})}^2=\int_{B_\varepsilon(y)} p \Phi_{\varepsilon,y}\,dx.
\end{equation}
This together with \eqref{180702@A3} gives  \eqref{180618@A1}.

We now turn to the proof of \eqref{180618@A1a} when $d\ge 3$.
Let $\eta$ be a smooth function on $\bR^d$ satisfying
$$
0\le \eta\le 1, \quad \eta\equiv 1 \, \text{ on }\, B_{R/2}(y), \quad \operatorname{supp} \eta\subset B_R(y), \quad |\nabla \eta|\le C R^{-1}.
$$
Then by the Sobolev inequality, we have
$$
\begin{aligned}
&\|(1-\eta)\cG_\varepsilon(\cdot,y)\|_{L^{2d/(d-2)}(\Omega)}\\
&\le C \|D((1-\eta) \cG_\varepsilon(\cdot,y))\|_{L^{2}(\Omega)}\\
&\le C \Big(\|D \cG_\varepsilon(\cdot,y)\|_{L^2(\Omega\setminus \overline{B_{R/2}(y)})}+R^{-1}\|\cG_\varepsilon(\cdot,y)\|_{L^2(B_R(y)\setminus \overline{B_{R/2}(y)})}\Big).
\end{aligned}
$$
Notice from the Poincar\'e inequality and Lemma \ref{180110@lem1} that
$$
\begin{aligned}
&\|\cG_\varepsilon(\cdot,y)\|_{L^2(B_R(y)\setminus \overline{B_{R/2}(y)})}\\
&\le \big\|\cG_\varepsilon(\cdot,y)-(\cG_\varepsilon(\cdot,y))_{B_R(y)\setminus \overline{B_{R/2}(y)}}\big\|_{L^2(B_R(y)\setminus \overline{B_{R/2}(y)})}+CR^{-d/2}\|\cG_\varepsilon(\cdot,y)\|_{L^1(B_R(y))}\\
&\le CR\|D\cG_\varepsilon(\cdot,y)\|_{L^2(B_R(y)\setminus \overline{B_{R/2}(y)})}+CR^{-d/2+1},
\end{aligned}
$$
where $C=C(d,\lambda, \omega_A, K_0)$.
Combining these together and using \eqref{180618@A1}, we obtain
$$
\|(1-\eta)\cG_\varepsilon(\cdot,y)\|_{L^{2d/(d-2)}(\Omega)}\le C R^{-d/2},
$$
which implies  \eqref{180618@A1a} when $d\ge 3$.

Next, we prove that \eqref{180618@A1a} holds when $d=2$.
In this case, it suffices to show that
$$
\|\cG_{\varepsilon}(\cdot,y)\|_{L^\infty(\Omega_{R/16}(x))}\le CR^{-1}, \quad \forall x\in \Omega\setminus \overline{B_R(y)},
$$
where $C=C(\lambda, \omega_A, K_0, \theta)$.
We consider the following two cases:
$$
B_{R/8}(x)\cap \partial \Omega=\emptyset \quad \text{and} \quad  B_{R/8}(x)\cap \partial \Omega \neq \emptyset.
$$
Hereafter in this proof, we let $q_0=q_0( \lambda, K_0,\theta)>2$ be the constant from Lemma \ref{180723@lem1}.
\begin{enumerate}[i.]
\item
$B_{R/8}(x)\cap \partial \Omega=\emptyset$:
Let $\zeta$ be a smooth function on $\bR^2$ satisfying
$$
0\le \zeta\le 1, \quad \zeta\equiv 1 \, \text{ on }\, B_{R/16}(x), \quad \operatorname{supp} \zeta \subset B_{R/8}(x), \quad |\nabla \zeta|\le CR^{-1}.
$$
Then the pair $(v,\pi)$ given by
$$
(v,\pi)=\zeta \big(\cG_\varepsilon(\cdot,y)-(\cG_{\varepsilon}(\cdot,y))_{B_{R/8}(x)}, \cP_\varepsilon(\cdot,y)\big)
$$
satisfies
\begin{equation}		\label{180612@A4}
\left\{
\begin{aligned}
\cL v+\nabla \pi=f+D_\alpha f_\alpha &\quad \text{in }\, B_{R/8}(x),\\
\operatorname{div} v=g &\quad \text{in }\, B_{R/8}(x),\\
v=0 &\quad \text{on }\, \partial B_{R/8}(x),
\end{aligned}
\right.
\end{equation}
where
$$
f=A^{\alpha\beta}D_\beta \cG_\varepsilon(\cdot,y) D_\alpha \zeta+\nabla\zeta \cP_\varepsilon(\cdot,y),
$$
$$
f_\alpha=A^{\alpha\beta} D_\beta \zeta \big(\cG_\varepsilon(\cdot,y)-(\cG_{\varepsilon}(\cdot,y))_{B_{R/8}(x)}\big),
$$
$$
g=\nabla \zeta\cdot \big(\cG_\varepsilon(\cdot,y)-(\cG_{\varepsilon}(\cdot,y))_{B_{R/8}(x)}\big)+\zeta \big(\Phi_{\varepsilon,y}-(\Phi_{\varepsilon,y})_\Omega\big).
$$
Since it holds that
$$
|(\Phi_{\varepsilon,y})_\Omega|=|\Omega|^{-1}\le CR^{-2}
$$
and
$$
|\zeta\Phi_{\varepsilon,y}|=0 \quad \text{if }\, \varepsilon<\frac{7R}{8}, \quad |\zeta\Phi_{\varepsilon,y}|\le CR^{-2} \quad \text{if }\, \varepsilon\ge \frac{7R}{8},
$$
we obtain
$$
|g|\le CR^{-1}\big|\cG_\varepsilon(\cdot,y)-(\cG_{\varepsilon}(\cdot,y))_{B_{R/8}(x)}\big|+CR^{-2}.
$$
Using this together with
 \eqref{180618@A1}, H\"older's inequality, the Poincar\'e inequality, and the fact that
$$
\operatorname{supp} \zeta\subset B_{R/8}(x)\subset \Omega\setminus \overline{B_{R/2}(y)},
$$
one can easily check
\begin{equation}		\label{180725@A1}
\|f\|_{L^{2q_0/(2+q_0)}(B_{R/8}(x))}+\|f_\alpha\|_{L^{q_0}(B_{R/8}(x))}+\|g\|_{L^{q_0}(B_{R/8}(x))}\le CR^{2/q_0-2},
\end{equation}
where $C=C(\lambda, \omega_A, K_0, q_0)=C(\lambda, \omega_A, K_0,\theta)$.
Thus by Lemma \ref{180913@lem1} applied to \eqref{180612@A4}, we have
$$
\|Dv\|_{L^{q_0}(B_{R/8}(x))}\le CR^{2/q_0-2}.
$$
Therefore, from the Morrey inequality, Lemma \ref{180110@lem1}, and the fact that $v \in W^{1,q}_0(B_{R/8}(x))^2$, we get
$$
\|\cG_{\varepsilon}(\cdot,y)\|_{L^\infty(B_{R/16}(x))}\le \|v\|_{L^\infty(B_{R/8}(x))}+\big|(\cG_{\varepsilon}(\cdot,y))_{B_{R/16}(x)}\big|\le CR^{-1}.
$$
\item
$B_{R/8}(x)\cap \partial \Omega \neq\emptyset$:
We take a point $x_0\in \partial \Omega$ such that $d_x=|x-x_0|<R/8$.
Observe that
\begin{equation}		\label{180719@eq2a}
B_{R/8}(x)\subset B_{R/4}(x_0)\subset B_{R/2}(x_0)\subset \Omega\setminus \overline{B_{3R/8}(y)}.
\end{equation}
Let $\tilde{\zeta}$ be a smooth function on $\bR^2$ satisfying
$$
0\le \tilde{\zeta} \le 1, \quad \tilde{\zeta} \equiv 1 \, \text{ on }\, B_{R/4}(x_0), \quad \operatorname{supp} \tilde{\zeta} \subset B_{R/2}(x_0), \quad |\nabla \tilde{\zeta} |\le CR^{-1}.
$$
Then the pair $(\tilde{v}, \tilde{\pi})$ given by
$$
(\tilde{v},\tilde{\pi}):=\tilde{\zeta} \big(\cG_\varepsilon(\cdot,y), \cP_\varepsilon(\cdot,y)\big)
$$
satisfies
\begin{equation}		\label{180719@eq2}
\left\{
\begin{aligned}
\cL \tilde{v}+\nabla \tilde{\pi}=\tilde{f}+D_\alpha \tilde{f}_\alpha &\quad \text{in }\, \Omega,\\
\operatorname{div} \tilde{v}=\tilde{g} &\quad \text{in }\, \Omega,\\
v=0 &\quad \text{on }\, \partial \Omega,
\end{aligned}
\right.
\end{equation}
where
$$
\tilde{f}=A^{\alpha\beta}D_\beta \cG_\varepsilon(\cdot,y) D_\alpha \tilde{\zeta}+\nabla\tilde{\zeta} \cP_\varepsilon(\cdot,y),
 \quad \tilde{f}_\alpha=A^{\alpha\beta} D_\beta \tilde{\zeta} \cG_\varepsilon(\cdot,y),
$$
$$
\tilde{g}=\nabla \tilde{\zeta}\cdot \cG_\varepsilon(\cdot,y)+\tilde{\zeta} \big(\Phi_{\varepsilon,y}-(\Phi_{\varepsilon,y})_\Omega\big).
$$
Similar to \eqref{180725@A1},
by using \eqref{180618@A1}, \eqref{180719@eq2a}, H\"older's inequality, and the following boundary Poincar\'e inequality
$$
\|\cG_{\varepsilon}(\cdot,y)\|_{L^{q_0}(\Omega_{R/2}(x_0))}\le C\|D\cG_{\varepsilon}(\cdot,y)\|_{L^{2q_0/(2+q_0)}(\Omega_{R/2}(x_0))},
$$
we have
\begin{equation}		\label{180725@C1}
\|\tilde{f}\|_{L^{2q_0/(2+q_0)}(\Omega)}+\|\tilde{f}_\alpha\|_{L^{q_0}(\Omega)}+\|\tilde{g}\|_{L^{q_0}(\Omega)}\le CR^{2/q_0-2},
\end{equation}
where $C=C(\lambda, \omega_A, K_0,\theta)$.
Thus by Lemma \ref{180913@lem1} applied to \eqref{180719@eq2}, we have
$$
\|D\tilde{v}\|_{L^{q_0}(\Omega_{R/2}(x_0))}\le CR^{2/q_0-2}.
$$
Therefore, we get from  the Morrey inequality and $\tilde{v}(x_0)=0$ that
$$
\|\cG_{\varepsilon}(\cdot,y)\|_{L^\infty(\Omega_{R/8}(x))}\le \|\tilde{v}\|_{L^\infty(\Omega_{R/2}(x_0))}\le CR^{-1}.
$$
\end{enumerate}
The lemma is proved.
\end{proof}

From Lemma \ref{180719@lem1}, we obtain the following uniform weak type estimates.

\begin{lemma}		\label{180719@lem2}
Let $y\in \Omega$ and $\varepsilon\in (0, 1]$.
Then we have
$$
\big|\{x\in \Omega:|\cG_\varepsilon(x,y)|>c_0t\}\big|\le C t^{-d/(d-1)}, \quad t\ge (d_y^*)^{1-d}
$$
and
$$
\big|\{x\in \Omega: |D_x\cG_\varepsilon(x,y)|>t\}\big|+\big|\{x\in \Omega:|\cP_\varepsilon(x,y)|>t\}\big|\le C' t^{-1}, \quad t\ge (d_y^*)^{-d},
$$
where $c_0=1$ if $d\ge 3$ and $c_0=c_0(\lambda, \omega_A, K_0,\theta)>0$ if $d=2$.
In the above,
$(C,C')=(C,C')(d,\lambda,\omega_A, K_0)$ and $C$ depends also on $\theta$ when $d=2$.
\end{lemma}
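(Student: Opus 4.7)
The plan is to combine the integral bounds of Lemma \ref{180719@lem1} with Chebyshev's inequality and the trivial volume bound on a ball, then optimize the splitting radius $R$.

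For the first estimate, I would write, for any admissible $R\in(0,d_y^*]$,
$$
\{x\in\Omega:|\cG_\varepsilon(x,y)|>t\}\subset B_R(y)\cup \{x\in\Omega\setminus\overline{B_R(y)}:|\cG_\varepsilon(x,y)|>t\}.
$$
When $d\ge 3$, Chebyshev's inequality applied to \eqref{180618@A1a} with $q^*=2d/(d-2)$ bounds the measure of the second term by $Ct^{-q^*}R^{-dq^*/2}$, while the first contributes at most $|B_R(y)|\le CR^d$. Equalizing the two by the choice $R=t^{-1/(d-1)}$ yields a total bound of order $t^{-d/(d-1)}$, and the admissibility $R\le d_y^*$ is exactly the hypothesis $t\ge (d_y^*)^{1-d}$; here $c_0=1$ suffices. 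When $d=2$, \eqref{180618@A1a} is the pointwise bound $\|\cG_\varepsilon(\cdot,y)\|_{L^\infty(\Omega\setminus\overline{B_R(y)})}\le C_1 R^{-1}$, so Chebyshev is not directly available. Instead, for $t\ge (d_y^*)^{-1}$ I set $R=1/t\le d_y^*$ and fix $c_0=c_0(\lambda,\omega_A,K_0,\theta)=C_1$; then $\{|\cG_\varepsilon(\cdot,y)|>c_0 t\}\subset B_{1/t}(y)$ automatically, with measure $\pi t^{-2}=\pi t^{-d/(d-1)}$.

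For the estimates on $D_x\cG_\varepsilon$ and $\cP_\varepsilon$, I would repeat the same split and apply Chebyshev's inequality to the $L^2$-bound \eqref{180618@A1}, obtaining
$$
\big|\{x\in\Omega:|D_x\cG_\varepsilon(x,y)|>t\}\big|\le CR^d+C't^{-2}R^{-d},
$$
and analogously for $\cP_\varepsilon$. Balancing the two terms by the choice $R=t^{-1/d}$ yields the claimed bound of order $t^{-1}$, and the constraint $R\le d_y^*$ translates exactly into $t\ge (d_y^*)^{-d}$. The tracking of dependencies matches the claim: in both estimates the ball contribution $|B_R(y)|$ produces the constant flagged as possibly $\theta$-dependent (via the $L^{q^*}$ bound for $d=2$), while the $L^2$ constant from \eqref{180618@A1} is independent of $\theta$.

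The only genuinely delicate point is the $d=2$ case of the first estimate: since \eqref{180618@A1a} is a pointwise (rather than integral) bound with a fixed multiplicative constant, one cannot absorb that constant into a Chebyshev power, which is precisely why the statement tolerates a multiplicative factor $c_0$ in front of $t$. All remaining steps are routine splits and power-balancing.
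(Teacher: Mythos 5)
Your proof matches the paper's argument essentially step for step: the same split into a ball $B_R(y)$ and its complement, Chebyshev applied to the $L^{q^*}$ (resp. $L^2$) bounds of Lemma \ref{180719@lem1} with the radius balanced as $R=t^{-1/(d-1)}$ (resp. $R=t^{-1/d}$), and the same workaround via the pointwise $L^\infty$ bound and the constant $c_0$ when $d=2$. The only minor inaccuracy is in your closing remark on constants: the ball term $|B_R(y)|\le CR^d$ itself carries no $\theta$-dependence; the $\theta$ enters only through the $L^{q^*}$ constant (and the choice of $c_0$) when $d=2$, which is why $C$ picks up $\theta$ but $C'$ does not, exactly as the lemma states.
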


\begin{proof}
We only prove the first inequality because the other is the same with obvious modifications.
Let us set
\begin{equation}		\label{180719@eq4}
\cA_t=\{x\in \Omega:|\cG_{\varepsilon}(x,y)|>c_0t\},
\end{equation}
where $c_0$ is a constant to be chosen below.
We consider the following two cases:
$$
d\ge 3 \quad \text{and}\quad d=2.
$$
\begin{enumerate}[i.]
\item
$d\ge 3$:
Let $c_0=1$ and  $t\ge (d_y^*)^{1-d}$.
Then by \eqref{180618@A1a} with $R=t^{-1/(d-1)}$, we have
$$
\begin{aligned}
|\cA_t|&=|\cA_t\cap B_R(y)|+|\cA_t\setminus B_R(y)|\\
&\le CR^d+ \frac{1}{t^{2d/(d-2)}}\int_{\cA_t\setminus B_R(y)} |\cG_{\varepsilon}(\cdot,y)|^{2d/(d-2)}\,dx\le C t^{-d/(d-1)}.
\end{aligned}
$$
\item
$d=2$: Let $t\ge (d_y^*)^{-1}$.
Then by \eqref{180618@A1a} with $R=t^{-1}$, there exists a constant $c_0=c_0(\lambda, \omega_A, K_0,\theta)>0$ such that
$$
\|\cG_{\varepsilon}(\cdot,y)\|_{L^\infty(\Omega\setminus \overline{B_{R}(y)})}\le c_0R^{-1}.
$$
Therefore, we have
$$
|\cA_t|=|\cA_t\cap B_R(y)|\le CR^2=Ct^{-2}.
$$
\end{enumerate}
The lemma is proved.
\end{proof}

The following lemma is a simple consequence of Lemma \ref{180719@lem2}.

\begin{lemma}		\label{180719@lem3}
Let $y\in \Omega$, $\varepsilon\in (0,1]$, and $R\in (0, d_y^*]$.
Then we have
$$
\|\cG_{\varepsilon}(\cdot,y)\|_{L^q(B_R(y))}\le C R^{1-d+d/q}, \quad 0<q<\frac{d}{d-1},
$$
and
$$
\|D\cG_{\varepsilon}(\cdot,y)\|_{L^q(B_R(y))}+\|\cP_{\varepsilon}(\cdot,y)\|_{L^q(B_R(y))}\le C' R^{-d+d/q}, \quad 0<q<1,
$$
where $(C,C')=(C,C')(d,\lambda,\omega_A, K_0)$ and $C$ depends also on $\theta$ when $d=2$.
\end{lemma}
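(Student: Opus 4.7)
The plan is to obtain both inequalities by combining the weak-type bounds of Lemma \ref{180719@lem2} with the layer-cake representation
$$
\int_{B_R(y)} |h|^q\,dx = q\int_0^\infty t^{q-1}\, \big|\{x\in B_R(y):|h(x)|>t\}\big|\,dt.
$$
The strategy is standard: for any given threshold $t_0>0$, we split this integral at $t_0$, bound the distribution function trivially by $|B_R(y)|\le CR^d$ on $\{t\le t_0\}$, and by the weak-type estimate of Lemma \ref{180719@lem2} on $\{t>t_0\}$; then we choose $t_0$ to equate the two contributions.

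For the first inequality, I would apply this to $h=\cG_\varepsilon(\cdot,y)$ with the choice
$$
t_0=c_0 R^{1-d},
$$
which is admissible in the sense that $t_0/c_0=R^{1-d}\ge (d_y^*)^{1-d}$ because $R\le d_y^*$, so Lemma \ref{180719@lem2} gives $|\{|\cG_\varepsilon(\cdot,y)|>t\}|\le C(t/c_0)^{-d/(d-1)}$ for $t\ge t_0$. The first piece contributes
$$
q\int_0^{t_0} t^{q-1}\cdot CR^d\,dt = CR^d\, t_0^{\,q} = C R^{d-(d-1)q},
$$
while the second contributes
$$
q\int_{t_0}^\infty t^{q-1}\cdot C\,t^{-d/(d-1)}\,dt,
$$
which converges precisely when $q<d/(d-1)$ and in that case equals a constant multiple of $t_0^{q-d/(d-1)}= C R^{d-(d-1)q}$. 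Adding the two and taking the $q$-th root gives $\|\cG_\varepsilon(\cdot,y)\|_{L^q(B_R(y))}\le CR^{1-d+d/q}$, as required.

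The second inequality is proved in exactly the same way, applied separately to $h=D_x\cG_\varepsilon(\cdot,y)$ and $h=\cP_\varepsilon(\cdot,y)$, using the weak-$L^1$ bound of Lemma \ref{180719@lem2}. Here the natural threshold is
$$
t_0=R^{-d},
$$
which satisfies $t_0\ge (d_y^*)^{-d}$ since $R\le d_y^*$. The trivial piece gives $CR^d\, t_0^{\,q}=CR^{d-dq}$, and the tail integral $\int_{t_0}^\infty t^{q-2}\,dt$ converges iff $q<1$, contributing a constant multiple of $t_0^{q-1}=R^{d-dq}$. Taking the $q$-th root produces $CR^{-d+d/q}$. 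The only subtlety, the compatibility $t_0\ge (d_y^*)^{1-d}$ (respectively $(d_y^*)^{-d}$) that guarantees the weak-type estimate is valid on the entire tail $[t_0,\infty)$, follows automatically from the hypothesis $R\le d_y^*$. No obstacle beyond this routine bookkeeping is expected; the dependence of constants is inherited verbatim from Lemma \ref{180719@lem2} (with the extra dependence on $q$ coming from the factors $1/(d/(d-1)-q)$ and $1/(1-q)$ appearing when evaluating the tail integrals).
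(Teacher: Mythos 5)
Your proof is correct and follows essentially the same approach as the paper: the layer-cake (Cavalieri) representation with a split at the threshold $t_0 \sim R^{1-d}$ (resp. $R^{-d}$), bounding the low range trivially by $|B_R(y)|$ and the tail by the weak-type estimates from Lemma \ref{180719@lem2}, exactly as the paper does. Your side remark about the $q$-dependence of the constant (from the tail-integral factors) is also accurate and consistent with the notation $C_q$ used in Remark \ref{180217@rmk5}.
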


\begin{proof}
We only prove the first inequality because the other is the same with obvious modifications.
Let $q\in (0, d/(d-1))$ and recall the notation \eqref{180719@eq4}.
Then by the first inequality with $t=R^{-(d-1)}$ in Lemma \ref{180719@lem2}, we have
$$
\begin{aligned}
\int_{B_R(y)}|\cG_{\varepsilon}(\cdot,y)|^q\,dx
&=q\int_0^\infty s^{q-1}\big|\{x\in B_R(y):|\cG_{\varepsilon}(x,y)|>s\}\big|\,ds\\
&=c_0q\int_0^\infty (c_0s)^{q-1}\big|\{x\in B_R(y):|\cG_{\varepsilon}(x,y)|>c_0s\}\big|\,ds\\
&\le C\int_0^{t} s^{q-1}|B_R(y)|\,ds+C\int_{t}^\infty s^{q-1}|\cA_s|\,ds\\
&\le CR^{q-dq+d},
\end{aligned}
$$
which gives the desired estimate.
\end{proof}

\section{Proofs of main theorems}			\label{S_Pr}

This section is devoted to the proofs of the theorems in Section \ref{S3}.
Throughout this section, we denote by $(\cG_{\varepsilon}(\cdot,y), \cP_{\varepsilon}(\cdot,y))$ the approximated Green function constructed in Section \ref{S_Ap}.

\subsection{Proof of Theorem \ref{M1}}	
By Lemmas \ref{180719@lem1} and \ref{180719@lem3}, the weak compactness theorem, and a diagonalization process, we easily see that there exist a sequence $\{\varepsilon_\rho\}_{\rho=1}^\infty$ tending to zero and a pair $(\cG(\cdot,y),\cP(\cdot,y))$ such that for any $R \in (0,  d_y^*]$,
\begin{equation}		\label{180121@eq2}
\left.
\begin{aligned}
(1-\eta_R)\cG_{\varepsilon_\rho}(\cdot,y) \rightharpoonup (1-\eta_R)\cG(\cdot,y) & \quad \text{weakly in }\, Y^{1,2}_0(\Omega)^d,\\
\cP_{\varepsilon_\rho}(\cdot,y) \rightharpoonup \cP(\cdot,y) &\quad \text{weakly in }\, L^2(\Omega\setminus \overline{B_R(y)}),
\end{aligned}
\right.
\end{equation}
where $\eta_R$ is any smooth function in $\bR^d$ satisfying $\eta_R\equiv 1$ on $B_R(y)$, and that for fixed $q\in \big(1,\frac{d}{d-1}\big)$,
\begin{equation}		\label{180121@eq1}
\cG_{\varepsilon_\rho}(\cdot,y) \rightharpoonup \cG(\cdot,y) \quad \text{weakly in }\, L^q(B_{d_y^*}(y))^d.
\end{equation}
Moreover, we obtain the following uniform convergence.

\begin{lemma}		\label{180223@lem1}
For given compact set $\cK\subset \Omega\setminus \{y\}$, there is a subsequence of $\{\cG_{\varepsilon_\rho}(\cdot,y)\}$ that converges to $\cG(\cdot,y)$ uniformly on $\cK$.
\end{lemma}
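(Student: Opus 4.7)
The plan is to apply the Arzel\`a--Ascoli theorem: I will obtain a uniform-in-$\varepsilon$ modulus of continuity for $\{\cG_{\varepsilon_\rho}(\cdot,y)\}$ on a neighborhood of $\cK$, combine it with a uniform sup-bound coming from Lemma \ref{180110@lem1}, and then identify the extracted uniform limit with $\cG(\cdot,y)$ via the weak convergences \eqref{180121@eq2}--\eqref{180121@eq1}.

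Since $\cK$ is a compact subset of $\Omega \setminus \{y\}$, there exists $r > 0$ such that $B_{2r}(x) \subset \Omega \setminus \{y\}$ for every $x \in \cK$; in particular $r$ can be chosen with $r \le \tfrac14 \min\{d_y^*, \operatorname{dist}(\cK,\{y\})\}$. Cover $\cK$ by finitely many balls $B_r(x_i)$, $i=1,\ldots,N$, with $x_i \in \cK$. For every $\varepsilon \in (0, r)$ the support of $\Phi_{\varepsilon,y}$ is disjoint from each $B_{2r}(x_i)$, so on each such ball the pair $(\cG_\varepsilon(\cdot,y), \cP_\varepsilon(\cdot,y))$ satisfies
\[
\cL \cG_\varepsilon(\cdot,y) + \nabla \cP_\varepsilon(\cdot,y) = 0, \qquad \operatorname{div} \cG_\varepsilon(\cdot,y) = -(\Phi_{\varepsilon,y})_\Omega,
\]
where $(\Phi_{\varepsilon,y})_\Omega$ equals $1/|\Omega|$ or $0$ and is independent of $\varepsilon$.

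Because the coefficients $A^{\alpha\beta}$ are of partially Dini mean oscillation with respect to $x'$ in the interior of $\Omega$, the interior $W^{1,\infty}$-estimate for Stokes systems from \cite{MR3912724} (also recalled in the Appendix) applies on each ball $B_{2r}(x_i)$. It produces, uniformly in $\varepsilon$, a bound
\[
\|\cG_\varepsilon(\cdot,y)\|_{L^\infty(B_r(x_i))} + \|D\cG_\varepsilon(\cdot,y)\|_{L^\infty(B_r(x_i))} \le C\bigl(\|\cG_\varepsilon(\cdot,y)\|_{L^1(B_{2r}(x_i))} + r\bigr),
\]
in which, thanks to Lemma \ref{180110@lem1}, the right-hand side is bounded uniformly in $\varepsilon$. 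Summing over $i$, we conclude that $\{\cG_{\varepsilon_\rho}(\cdot,y)\}$ is uniformly bounded and enjoys a common modulus of continuity on $\cK$. Arzel\`a--Ascoli then yields a subsequence (still denoted $\{\cG_{\varepsilon_\rho}(\cdot,y)\}$) that converges uniformly on $\cK$ to some continuous function $\widetilde{\cG}$.

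To identify $\widetilde{\cG}$ with $\cG(\cdot,y)$, choose $R_0 > 0$ with $\cK \cap B_{R_0}(y) = \emptyset$ and take $R \in (0, \min\{R_0, d_y^*\})$ together with a cutoff $\eta_R$ satisfying $\eta_R \equiv 1$ on $B_R(y)$ and $\eta_R \equiv 0$ on a neighborhood of $\cK$. The weak convergence \eqref{180121@eq2} then forces $\cG_{\varepsilon_\rho}(\cdot,y) \rightharpoonup \cG(\cdot,y)$ weakly in $L^2$ on that neighborhood of $\cK$, which together with the uniform convergence gives $\widetilde{\cG} = \cG(\cdot,y)$ a.e.\ on $\cK$; by the convention stated at the start of Section \ref{S_Ap} (replacing each representative by its continuous version where available), this equality is pointwise. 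The main obstacle is establishing the uniform modulus of continuity, and the essential ingredient is the interior $W^{1,\infty}$-regularity theory for Stokes systems with partially Dini mean oscillation coefficients, which is exactly the tool that distinguishes the present construction of the pressure Green function from the earlier velocity-only arguments.
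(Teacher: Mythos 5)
Your overall strategy — interior $W^{1,\infty}$-regularity plus Arzel\`a--Ascoli, followed by identification of the uniform limit with $\cG(\cdot,y)$ via the weak convergence \eqref{180121@eq2} — is exactly the paper's. The explicit identification step is a welcome addition that the paper leaves implicit.

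However, the displayed uniform bound is not what the interior estimate actually gives, and this creates a gap. Lemma~\ref{180108@lem3} (the interior estimate from the Appendix) bounds $\|D\cG_\varepsilon(\cdot,y)\|_{L^\infty(B_{R/2})}+\|\cP_\varepsilon(\cdot,y)\|_{L^\infty(B_{R/2})}$ by $R^{-d}\bigl(\|D\cG_\varepsilon(\cdot,y)\|_{L^1(B_R)}+\|\cP_\varepsilon(\cdot,y)\|_{L^1(B_R)}\bigr)$ plus lower-order terms, not by $\|\cG_\varepsilon(\cdot,y)\|_{L^1}$ alone. So to conclude a uniform $W^{1,\infty}$-bound you also need uniform control of the $L^1$-norms of $D\cG_\varepsilon(\cdot,y)$ and $\cP_\varepsilon(\cdot,y)$ on your balls. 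Lemma~\ref{180110@lem1} only controls $\|\cG_\varepsilon(\cdot,y)\|_{L^1}$. A Caccioppoli argument would recover the gradient, but not the pressure: Caccioppoli controls $\cP_\varepsilon-(\cP_\varepsilon)_{B}$, and the mean of $\cP_\varepsilon$ over a small ball is only pinned down by the global normalization $(\cP_\varepsilon)_\Omega=0$, which is not a local quantity. The paper supplies the missing ingredient via Lemma~\ref{180719@lem1}: estimate \eqref{180618@A1} gives the uniform-in-$\varepsilon$ bound $\|D\cG_\varepsilon(\cdot,y)\|_{L^2(\Omega\setminus\overline{B_R(y)})}+\|\cP_\varepsilon(\cdot,y)\|_{L^2(\Omega\setminus\overline{B_R(y)})}\le C R^{-d/2}$, which by H\"older yields exactly the $L^1$-bounds needed on your balls (they lie in $\Omega\setminus\overline{B_R(y)}$). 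You should cite Lemma~\ref{180719@lem1}, as the paper does, rather than (or in addition to) Lemma~\ref{180110@lem1}.
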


\begin{proof}
Let ${B_R(x)}\subset \Omega\setminus \{y\}$ and $0<\varepsilon<\min\{d_y^*, \operatorname{dist}(y, B_R(x))\}$.
Note that
$$
\left\{
\begin{aligned}
\cL \cG_\varepsilon(\cdot,y) +\nabla \cP_\varepsilon(\cdot,y)=0 &\quad \text{in }\, B_R(x),\\
\operatorname{div} \cG_\varepsilon(\cdot,y)=-|\Omega|^{-1} &\quad \text{in }\,B_R(x).
\end{aligned}
\right.
$$
By Lemmas \ref{180719@lem1} and \ref{180108@lem3}, we see that $\|\cG_\varepsilon(\cdot,y)\|_{W^{1,\infty}(B_{R/2}(x))}\le C$,
where $C$ does not depend on $\varepsilon$.
This implies that $\{\cG_{\varepsilon}(\cdot,y)\}$ is equicontinuous and uniformly bounded on $\overline{B_{R/2}(x)}$.
Therefore, we get the desired conclusion from the Arzel\'a-Ascoli theorem.
\end{proof}

The pair $(\cG, \cP)$ satisfies the estimates in Remark \ref{180217@rmk5}.
Indeed, the estimates in the assertion $(a)$ are simple consequences of Lemma \ref{180719@lem1} and the weak lower semi-continuity.
Then by following the same steps used in Lemmas \ref{180719@lem2} and \ref{180719@lem3}, we see that $(\cG, \cP)$ satisfies the estimates in the assertions $(b)$ and $(c)$.

Now we shall show that $(\cG, \cP)$ satisfies the properties $(a)$--$(c)$ in Definition \ref{D0} so that it is a Green function for the pressure of $\cL$ in $\Omega$.

Obviously, $(\cG, \cP)$ satisfies the property $(a)$.
To verify the property $(b)$, we apply $\phi\in C^\infty_0(\Omega\setminus \overline{B_r(y)})^d$ as a test function to \eqref{180110@eq2} and use \eqref{180121@eq2} to get
\begin{align*}
&\int_{\Omega\setminus \overline{B_r(y)}} A^{\alpha\beta}D_\beta \cG(\cdot,y)\cdot D_\alpha \phi\,dx+\int_{\Omega\setminus \overline{B_r(y)}} \cP(\cdot,y) \operatorname{div}\phi\,dx\\
&=\lim_{\rho\to \infty}\bigg(\int_{\Omega\setminus \overline{B_r(y)}} A^{\alpha\beta}D_\beta \cG_{\varepsilon_\rho}(\cdot,y)\cdot D_\alpha \phi\,dx+\int_{\Omega\setminus \overline{B_r(y)}} \cP_{\varepsilon_\rho}(\cdot,y) \operatorname{div}\phi\,dx\bigg)=0.
\end{align*}
This implies that
$$
\cL \cG(\cdot,y)+\nabla \cP(\cdot,y)=0 \quad \text{in }\,  \Omega\setminus \overline{B_r(y)}.
$$
Similarly, by applying $\varphi\in C^\infty_0(\Omega)$ as a test function to the divergence equation in \eqref{180110@eq2}, and using \eqref{180121@eq2} and \eqref{180121@eq1}, we have
\begin{align*}
&\int_{\Omega}\cG(\cdot,y) \cdot \nabla \varphi\,dx\\
&=\int_{B_{d_y^*}(y)}\cG(\cdot,y)\cdot \nabla(\zeta \varphi)\,dx+\int_{\Omega\setminus \overline{B_{d_y^*/2}(y)}}\cG(\cdot,y)\cdot \nabla ((1-\zeta)\varphi)\,dx\\
&=-\lim_{\rho\to \infty}\bigg(\int_{\Omega} \operatorname{div}\cG_{\varepsilon_\rho}(\cdot,y)  \zeta \varphi\,dx+\int_{\Omega}\operatorname{div}\cG_{\varepsilon_\rho}(\cdot,y)(1-\zeta)\varphi\,dx\bigg)\\
&=-\lim_{\rho\to \infty}\bigg(\int_{\Omega} \Phi_{\varepsilon_\rho,y}   \varphi\,dx-(\Phi_{\varepsilon_\rho,y})_\Omega \int_{\Omega}\varphi\,dx\bigg)\\
&=-\varphi(y)+\dashint_{\Omega} \varphi \,dx,
\end{align*}
where $\zeta$ is a smooth function on $\bR^d$ satisfying $\zeta\equiv 1$ on $B_{d_y^*/2}(y)$ and $\operatorname{supp}\zeta\subset B_{d_y^*}(y)$.
This implies that
$$
\operatorname{div} \cG(\cdot,y)=\delta_y-\frac{1}{|\Omega|} \quad \text{in }\, \Omega,
$$
and thus  $(\cG, \cP)$ satisfies the property $(b)$.
Therefore, by applying Lemma \ref{180108@lem3} to \eqref{180212@A1},
$\cG(\cdot,y)$ is continuous in $\Omega\setminus \{y\}$ and
$$
(\cG(\cdot,y), \cP(\cdot,y))\in W^{1,\infty}_{\rm{loc}}(\Omega\setminus \{y\})^d\times L^\infty_{\rm{loc}}(\Omega\setminus \{y\}).
$$
To verify the property $(c)$ in Definition \ref{D0}, suppose that $(u,p)\in Y^{1,2}_0(\Omega)^d\times \tilde{L}^2(\Omega)$ is a unique weak solution of \eqref{180206@eq2}.
By testing \eqref{180206@eq2} and \eqref{180110@eq2} with $\cG_{\varepsilon_\rho}(\cdot,y)$ and $u$, respectively, we have
\begin{equation}		\label{180206@eq3}
\int_{\Omega} p \Phi_{\varepsilon_\rho,y}\,dx=-\int_\Omega \cG_{\varepsilon_\rho}(x,y)\cdot f(x)\,dx+\int_\Omega \cP_{\varepsilon_\rho}(x,y) g(x)\,dx.
\end{equation}
Notice from \eqref{180121@eq2} and \eqref{180121@eq1} that for any $y\in \Omega\setminus \overline{\operatorname{supp}g}$, the right-hand side of \eqref{180206@eq3} converges to
$$
-\int_\Omega \cG(x,y)\cdot f(x)\,dx+\int_\Omega \cP(x,y) g(x)\,dx\quad \text{as }\, \rho\to \infty.
$$
On the other hand, the left-hand side of \eqref{180206@eq3} converges to $p(y)$ for any $y$ in the Lebesgue set of $p$.
This implies that $(\cG,\cP)$ satisfies the property $(c)$ in Definition \ref{D0}.

To complete the proof of the theorem, it remains to show the estimates \eqref{180207@eq1a} and \eqref{180302@eq1}.
Let $x,y\in \Omega$ with $0<|x-y|\le d_y^*/2$, and set $R=|x-y|/2$.
Since $(\cG(\cdot,y), \cP(\cdot,y))$ satisfies
$$
\left\{
\begin{aligned}
\cL \cG(\cdot,y)+\nabla \cP(\cdot,y)=0 &\quad \text{in }\, B_R(x),\\
\operatorname{div} \cG(\cdot,y)=-\frac{1}{|\Omega|} &\quad \text{in }\, B_R(x),
\end{aligned}
\right.
$$
we  obtain by \eqref{180110@eq8} and $R^d\le C |\Omega|$ that
$$
\begin{aligned}
&\|D\cG(\cdot,y)\|_{L^\infty(B_{R/2}(x))}+\|\cP(\cdot,y)\|_{L^\infty(B_{R/2}(x))}\\
&\le C R^{-d}\big(\|D\cG(\cdot,y)\|_{L^1(B_R(x))}+\|\cP(\cdot,y)\|_{L^1(B_R(x))}\big)+CR^{-d},
\end{aligned}
$$
where $C=C(d,\lambda,\omega_A)$.
Thus from H\"older's inequality, $B_R(x)\subset \Omega\setminus B_R(y)$, and Remark \ref{180217@rmk5} $(a)$, we get
$$
\begin{aligned}
&\|D\cG(\cdot,y)\|_{L^\infty(B_{R/2}(x))}+\|\cP(\cdot,y)\|_{L^\infty(B_{R/2}(x))}\\
&\le C R^{-d/2}\Big(\|D\cG(\cdot,y)\|_{L^2(\Omega\setminus \overline{B_R(y)})}+\|\cP(\cdot,y)\|_{L^2(\Omega\setminus \overline{B_R(y)})}\Big)+ CR^{-d}\\
&\le CR^{-d},
\end{aligned}
$$
where $C=C(d,\lambda,\omega_A,K_0)$.
This implies \eqref{180302@eq1}.
Similarly, by \eqref{180110@eq8a} and Remark \ref{180217@rmk5} $(a)$, we have that
$$
\begin{aligned}
&\|\cG(\cdot,y)\|_{L^\infty(B_{R/2}(x))}\\
&\le CR^{-d}\|\cG(\cdot,y)\|_{L^1(B_{R/2}(x))}\\
&\quad +CR^{1-d}\big(\|D\cG(\cdot,y)\|_{L^1(B_R(x))}+\|\cP(\cdot,y)\|_{L^1(B_R(x))}\big)+CR^{1-d}\\
&\le C R^{1-d/2} \|\cG(\cdot,y)\|_{L^{q^*}(\Omega\setminus \overline{B_R(y)})}\\
&\quad +C R^{1-d/2}\Big(\|D\cG(\cdot,y)\|_{L^2(\Omega\setminus \overline{B_R(y)})}+\|\cP(\cdot,y)\|_{L^2(\Omega\setminus \overline{B_R(y)})}\Big)+CR^{1-d}\\
&\le CR^{1-d},
\end{aligned}
$$
where $C=C(d,\lambda, \omega_A, K_0)$ and $C$ depends also on $\theta$ when $d=2$. This together with the continuity of $\cG(\cdot,y)$ in $B_{R/2}(x)$ gives \eqref{180207@eq1a}. The theorem is proved.
\qed

\subsection{Proof of Theorem \ref{M2}}	
Let $(\cG, \cP)$ be the Green function for the pressure of $\cL$ constructed in Theorem \ref{M1}.
Obviously, by Lemma \ref{180208@lem1} and Definition \ref{D0} $(b)$, we see that
$$
(\cG(\cdot,y), \cP(\cdot,y))\in C^1(\overline{\Omega\setminus B_R(y)})^d\times C(\overline{\Omega\setminus B_R(y)})
$$
for all $y\in \Omega$ and $R>0$.

We divide the proof into several steps.

{{\bf{\em{Step 1.}}}
In this step, we establish various boundary estimates for the approximated Green function $(\cG_\varepsilon, \cP_\varepsilon)$.
The following lemma is about the $L^1$-estimate for $\cG_\varepsilon$, which is the  counterpart of  Lemma \ref{180110@lem1}.

\begin{lemma}		\label{180216@lem1}
Let $y\in \Omega$ and  $\varepsilon\in (0,1]$.
Then for any $x\in \Omega$ and $R\in (0,1]$, we have
$$
\|\cG_\varepsilon(\cdot,y)\|_{L^1(\Omega_R(x))}\le CR,
$$
where $
C=C(d,\lambda,\omega_A,K_0,  R_0,\varrho_0)$ and $C$ depends also on $\theta$ when $d=2$.
\end{lemma}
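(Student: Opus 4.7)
The plan is to mimic the proof of Lemma \ref{180110@lem1}, retaining its dichotomy between $2\varepsilon\le R$ and $2\varepsilon>R$, but replacing the interior $L^\infty$ bounds used there by the boundary versions that are licensed by the $C^{1,\rm{Dini}}$ assumption on $\partial\Omega$. The new wrinkle is that $\Omega_R(x)$ may genuinely meet $\partial\Omega$ and, more importantly, the dual pressure must now be controlled in a (possibly boundary) ball $\Omega_{R/2}(y)$ rather than a purely interior ball.

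In the duality case $2\varepsilon\le R$, I would set
$$
f=\chi_{\Omega_R(x)}\bigl(\operatorname{sgn}\cG_\varepsilon^1(\cdot,y),\ldots,\operatorname{sgn}\cG_\varepsilon^d(\cdot,y)\bigr)^{\top},
$$
and use Lemma \ref{180108@lem1} to produce $(u,p)\in Y^{1,2}_0(\Omega)^d\times\tilde L^2(\Omega)$ solving $\cL^* u+\nabla p=-f$, $\operatorname{div} u=0$ in $\Omega$, $u|_{\partial\Omega}=0$. Testing \eqref{180110@eq2} against $u$ and the adjoint equation against $\cG_\varepsilon(\cdot,y)$ gives, using $\varepsilon\le R/2$,
$$
\|\cG_\varepsilon(\cdot,y)\|_{L^1(\Omega_R(x))}=\int_\Omega p\,\Phi_{\varepsilon,y}\,dx\le \|p\|_{L^\infty(\Omega_{R/2}(y))}.
$$
Since $(u,p)$ satisfies $\cL^* u+\nabla p=0$ and $\operatorname{div} u=0$ in $\Omega_R(y)$ with $u=0$ on $\partial\Omega\cap B_R(y)$ whenever $d_y<R$, I would then apply the boundary $L^\infty$ estimate for the pressure (the $C^{1,\rm{Dini}}$ boundary counterpart of \eqref{180110@eq8}, available via the regularity results of \cite{MR3912724,MR3890946} recalled in the Appendix, and in the spirit of Lemma \ref{180208@lem1}) to obtain
$$
\|p\|_{L^\infty(\Omega_{R/2}(y))}\le CR^{-d/2}\bigl(\|Du\|_{L^2(\Omega)}+\|p\|_{L^2(\Omega)}\bigr),
$$
with $C$ depending on $d,\lambda,\omega_A,R_0,\varrho_0$. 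For $d\ge 3$, Lemma \ref{180108@lem1} then bounds $\|Du\|_{L^2(\Omega)}+\|p\|_{L^2(\Omega)}\le C\|f\|_{L^{2d/(d+2)}(\Omega)}\le CR^{(d+2)/2}$, and the product is $CR$ as desired; for $d=2$ I would instead invoke Lemma \ref{180913@lem1} with exponent $q_0>2$, combined with H\"older's inequality (noting that $f$ has support in the bounded set $\Omega_R(x)$ with $R\le 1$), to land on the same $CR$ endpoint. Note that no $(g)_\Omega$-correction is needed here because $g=0$.

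In the remaining case $2\varepsilon>R$, so $R<2\varepsilon\le 2$, the bound is essentially dimensional. For $d\ge 3$, H\"older, the Sobolev inequality applied to $\cG_\varepsilon(\cdot,y)\in Y^{1,2}_0(\Omega)^d$, and \eqref{180110@eq4} give
$$
\|\cG_\varepsilon(\cdot,y)\|_{L^1(\Omega_R(x))}\le CR^{(d+2)/2}\|\cG_\varepsilon(\cdot,y)\|_{L^{2d/(d-2)}(\Omega)}\le CR^{(d+2)/2}\varepsilon^{-d/2}\le CR,
$$
using $R\le 2\varepsilon$ at the last step. For $d=2$ I would parallel the sub-case (ii) of Lemma \ref{180110@lem1}, testing $\cL\cG_\varepsilon+\nabla\cP_\varepsilon=0$ against the solution of an auxiliary dual system driven by $f_\alpha\in C^\infty_0(\Omega)^2$, $g\in C^\infty_0(\Omega)$, invoking Lemma \ref{180913@lem1} to gain $L^{q_0}$-regularity, and then using the Sobolev embedding (valid up to the boundary since the relevant function belongs to $Y^{1,2}_0$) to close the estimate.

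The main obstacle I anticipate is the boundary $L^\infty$-estimate for the pressure invoked in Step~(i). The interior bound \eqref{180110@eq8} that was sufficient for Lemma \ref{180110@lem1} does not cover balls $\Omega_{R/2}(y)$ meeting $\partial\Omega$, so one must upgrade to a global-in-$y$ estimate that exploits the $C^{1,\rm{Dini}}$ flattening of the boundary and the fact that $u$ vanishes on $\partial\Omega\cap B_R(y)$. Once this ingredient is in place, the rest is a routine combination of the duality argument, Lemma \ref{180108@lem1} (or Lemma \ref{180913@lem1} when $d=2$), and the uniform bound \eqref{180110@eq4} already established in Section \ref{S_Ap}.
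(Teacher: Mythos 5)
Your route is essentially the paper's: reduce to $2\varepsilon\le R$ (and, in the paper, further to $d_x<R$, the interior case being covered by Lemma \ref{180110@lem1}), run the duality argument with $f=\chi_{\Omega_R(x)}\operatorname{sgn}\cG_\varepsilon(\cdot,y)$, and close via the boundary $L^\infty$-estimate of Lemma \ref{180208@lem1} together with Lemma \ref{180108@lem1} for $d\ge3$ or Lemma \ref{180913@lem1} for $d=2$. Two points in your write-up are not quite right. First, $(u,p)$ does not solve the homogeneous system in $\Omega_R(y)$: nothing forbids $\Omega_R(x)\cap\Omega_R(y)\neq\emptyset$ (indeed $x=y$ is allowed), so $f$ may be nonzero there. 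One must therefore apply the inhomogeneous form of the boundary estimate \eqref{180304@eq1}, whose extra contribution $CR\|f\|_{L^\infty(\Omega_R(y))}\le CR$ is harmless but must be carried along. Second, and more substantively, the estimate you write for $\|p\|_{L^\infty(\Omega_{R/2}(y))}$ has the shape of the \emph{interior} bound coming from \eqref{180110@eq8}; the actual boundary bound \eqref{180304@eq1} from Lemma \ref{180208@lem1} carries the additional term $CR^{-d-1}\|u\|_{L^1(\Omega_R(y))}$, which your statement silently drops. That term is precisely where the extra work lies: the paper absorbs it via the Sobolev inequality when $d\ge 3$ (so that $\|u\|_{L^{2d/(d-2)}(\Omega)}\le C\|Du\|_{L^2(\Omega)}\le CR^{(d+2)/2}$, hence $R^{-d-1}\|u\|_{L^1(\Omega_R(y))}\le CR$) and via the Morrey inequality combined with the $L^{q_0}$-estimate of Lemma \ref{180913@lem1} and the boundary vanishing of $u$ when $d=2$. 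You correctly flag in your final paragraph that the boundary $L^\infty$-estimate is the crux, but you should also note that upgrading from the interior to the boundary version costs you this $\|u\|_{L^1}$-term and plan explicitly for how to control it.
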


\begin{proof}
Due to the interior estimate in Lemma \ref{180110@lem1}, it suffices to consider the case when $d_x<R\le 1$.
Moreover, because the proof of Lemma \ref{180110@lem1} still works for $2\varepsilon>R$, we only need to prove the lemma with $2\varepsilon\le R$.

Now, we assume that $d_x<R\le 1$ and $2\varepsilon\le R$.
Let $(u,p)\in W^{1,2}_0(\Omega)^d\times \tilde{L}^2(\Omega)$ be a unique weak solution of the problem
$$
\left\{
\begin{aligned}
\cL^* u+\nabla p=-f &\quad \text{in }\, \Omega,\\
\operatorname{div} u=0 &\quad \text{in }\, \Omega,\\
u=0 &\quad \text{on }\, \partial \Omega,
\end{aligned}
\right.
$$
where $f=\chi_{\Omega_{R}(x)}\big(\operatorname{sgn}\cG_\varepsilon^1(\cdot,y), \ldots, \operatorname{sgn}\cG_\varepsilon^d(\cdot,y)\big)^{\top}$.
Then by using \eqref{180304@eq1} and following the same argument used in deriving \eqref{180726@eq1}, we have
\begin{equation}		\label{180914@eq1}
\begin{aligned}
\|\cG_\varepsilon(\cdot,y)\|_{L^1(\Omega_{R}(x))}
&\le CR^{-d-1}\|u\|_{L^1(\Omega_R(y))}\\
&\quad +C R^{-d}\big(\|Du\|_{L^1(\Omega_R(y))}+\|p\|_{L^1(\Omega_R(y))}\big)+CR,
\end{aligned}
\end{equation}
where $C=C(d,\lambda, \omega_A, R_0, \varrho_0)$.
If $d\ge 3$, then by Lemma \ref{180108@lem1} and the Sobolev inequality, we have
$$
\|u\|_{L^{2d/(d-2)}(\Omega)}+\|Du\|_{L^2(\Omega)}+\|p\|_{L^2(\Omega)}\le CR^{(d+2)/2}.
$$
From this together with  \eqref{180914@eq1} and H\"older's inequality, we get the desired estimate.
On the other hand, if $d=2$, then by using the Morrey inequality and the fact that $\cG_{\varepsilon}(\cdot,y)=0$ on $\partial \Omega\cap B_R(x)$, we have
$$
\|u\|_{L^\infty(\Omega_R(x))}\le CR^{1-2/q_{0}}\|Du\|_{L^{q_0}(\Omega_R(x))},
$$
where $q_0=q_0(\lambda,K_0,\theta)>2$ is the constant from Lemma \ref{180723@lem1}.
From this together with Lemma \ref{180913@lem1} and Remark \ref{180717@rmk1}, we get
$$
R^{-1+2/q_{0}}\|u\|_{L^\infty(\Omega_R(x))}+\|Du\|_{L^{q_{0}}
(\Omega_R(x))}+\|p\|_{L^{q_{0}}(\Omega_R(x))}\le CR^{(2+q_{0})/q_{0}},
$$
where $C=C(\lambda, K_0,\theta)$.
Therefore, by \eqref{180914@eq1} and H\"older's inequality, we obtain the desired estimate.
\end{proof}

The following lemma is an analog of Lemma \ref{180719@lem1}.

\begin{lemma}		\label{180324@lem1}
Let $y\in \Omega$, $\varepsilon\in (0,1]$, $R\in (0, 1]$, and
$$
q^*=\frac{2d}{d-2} \quad \text{if }\, d\ge 3, \quad q^*=\infty \quad \text{if }\, d=2.
$$
Then we have
\begin{equation}		\label{180702@eq2}
\|\cG_{\varepsilon}(\cdot,y)\|_{L^{q^*}(\Omega\setminus \overline{B_R(y)})}\le CR^{-d/2}
\end{equation}
and
\begin{equation}		\label{180702@eq1}
\|DG_{\varepsilon}(\cdot,y)\|_{L^2(\Omega\setminus \overline{B_R(y)})}+\|\cP_{\varepsilon}(\cdot,y)\|_{L^2(\Omega\setminus \overline{B_R(y)})}\le C'R^{-d/2},
\end{equation}
where $(C,C')=(C,C')(d,\lambda, \omega_A, K_0, R_0, \varrho_0)$ and $C$ depends also on $\theta$ when $d=2$.
\end{lemma}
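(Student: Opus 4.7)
The plan is to follow the duality argument in the proof of Lemma \ref{180719@lem1} almost verbatim, replacing every interior $L^\infty$-type estimate by its global counterpart, which is now available because $\Omega$ has a $C^{1,\mathrm{Dini}}$ boundary and the coefficients are of Dini mean oscillation in all the directions. As in that earlier proof, the case $\varepsilon > R/2$ is immediate from \eqref{180110@eq4}, so I would work under the assumption $\varepsilon \le R/2$ throughout.

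For \eqref{180702@eq1}, I would take $f_\alpha = \chi_{\Omega\setminus\overline{B_R(y)}}\, D_\alpha \cG_\varepsilon(\cdot,y)$ and $g=\chi_{\Omega\setminus\overline{B_R(y)}}\, \cP_\varepsilon(\cdot,y)$, solve the dual system \eqref{180201@B2} via Lemma \ref{180108@lem1} (together with Lemma \ref{180913@lem1} when $d=2$), and run the test-function duality to recover the identity
\begin{equation*}
\|D\cG_\varepsilon(\cdot,y)\|_{L^2(\Omega\setminus\overline{B_R(y)})}^2 + \|\cP_\varepsilon(\cdot,y)\|_{L^2(\Omega\setminus\overline{B_R(y)})}^2 = \int_{B_\varepsilon(y)} p\, \Phi_{\varepsilon,y}\,dx
\end{equation*}
as in \eqref{180201@B3}. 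The right-hand side is bounded by $\|p\|_{L^\infty(B_{R/2}(y))}$. On $B_R(y)\cap\Omega$ the pair $(u,p)$ satisfies $\cL^* u + \nabla p = 0$ with $\operatorname{div} u$ constant, so the global $L^\infty$ pressure estimate on a $C^{1,\mathrm{Dini}}$ domain with Dini-mean-oscillation coefficients (the boundary analogue of the interior bound, in the spirit of \eqref{180304@eq1}) gives
\begin{equation*}
\|p\|_{L^\infty(B_{R/2}(y))}\le CR^{-d/2}\bigl(\|Du\|_{L^2(\Omega)} + \|p\|_{L^2(\Omega)}\bigr) + C|\Omega|^{-1/2}\|g\|_{L^2(\Omega)},
\end{equation*}
and combining with the energy bound for $(u,p)$ and absorbing (using $R^d\le C|\Omega|$) yields \eqref{180702@eq1}.

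For \eqref{180702@eq2} in dimension $d\ge 3$ the argument is cosmetic: introduce a cutoff $\eta$ with $\eta\equiv 1$ on $B_{R/2}(y)$ and $\operatorname{supp}\eta\subset B_R(y)$, apply the Sobolev inequality to $(1-\eta)\cG_\varepsilon(\cdot,y)\in Y^{1,2}_0(\Omega)^d$, and control the annular $L^2$ norm of $\cG_\varepsilon(\cdot,y)$ via \eqref{180702@eq1}, the Poincar\'e inequality, and Lemma \ref{180216@lem1}, exactly as in the interior case. In dimension $d=2$ it suffices to show $\|\cG_\varepsilon(\cdot,y)\|_{L^\infty(\Omega_{R/16}(x))}\le CR^{-1}$ for every $x\in\Omega\setminus\overline{B_R(y)}$; following the two-subcase split used in Lemma \ref{180719@lem1} (namely $B_{R/8}(x)\cap\partial\Omega=\emptyset$ vs.\ $B_{R/8}(x)\cap\partial\Omega\ne\emptyset$), I would multiply $(\cG_\varepsilon(\cdot,y),\cP_\varepsilon(\cdot,y))$ by a suitable cutoff, apply Lemma \ref{180913@lem1} to the resulting localised Stokes problem, and conclude via the Morrey embedding $W^{1,q_0}\hookrightarrow L^\infty$, using in the boundary subcase the boundary Poincar\'e inequality (available from the Dirichlet condition and \eqref{180703@eq5}) together with Lemma \ref{180216@lem1} to estimate the forcing.

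The principal obstacle is the global $L^\infty$ bound for $p$ driving the duality step: one must verify that the scaling $\|p\|_{L^\infty(B_{R/2}(y))}\le CR^{-d/2}\|(Du,p)\|_{L^2}$ persists \emph{uniformly up to} $\partial\Omega$. This is precisely where the $C^{1,\mathrm{Dini}}$ hypothesis on $\partial\Omega$ and the full Dini mean oscillation of the coefficients are needed, entering through the global $W^{1,\infty}$ and $C^1$ regularity theory for Stokes systems developed in \cite{MR3912724, MR3890946}; once this estimate is available, every remaining step is a routine globalization of Lemma \ref{180719@lem1}.
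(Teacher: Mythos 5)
Your proposal follows exactly the paper's strategy (duality with $f_\alpha=\chi_{\Omega\setminus\overline{B_R(y)}}D_\alpha\cG_\varepsilon(\cdot,y)$, $g=\chi_{\Omega\setminus\overline{B_R(y)}}\cP_\varepsilon(\cdot,y)$, the global $L^\infty$ pressure estimate, then the testing identity), and for \eqref{180702@eq2} the covering and two-subcase treatment is also what the paper does. However, there is a real gap in the key pressure bound.

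You assert that the boundary analogue of the interior pressure estimate gives
\begin{equation*}
\|p\|_{L^\infty(\Omega_{R/2}(y))}\le CR^{-d/2}\bigl(\|Du\|_{L^2(\Omega)}+\|p\|_{L^2(\Omega)}\bigr)+C|\Omega|^{-1/2}\|g\|_{L^2(\Omega)},
\end{equation*}
but this is not what the global estimate \eqref{180304@eq1} actually delivers. Unlike the interior bound \eqref{180110@eq8}, the up-to-the-boundary estimate of Lemma \ref{180208@lem1} contains the additional term $R^{-d-1}\|u\|_{L^1(\Omega_R(y))}$, which after H\"older becomes $R^{-d/2-1}\|u\|_{L^2(\Omega_R(y))}$ and has the wrong scaling to be controlled by $R^{-d/2}\|Du\|_{L^2}$ without an extra ingredient. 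The paper closes this with the boundary Poincar\'e inequality $\|u\|_{L^2(\Omega_{2R}(y))}\le CR\|Du\|_{L^2(\Omega_{2R}(y))}$, which is valid because $u$ vanishes on $\partial\Omega$ and because \eqref{180703@eq5} guarantees $|B_{2R}(y)\setminus\Omega|\ge\theta R^d$; this is also why the paper first reduces to the boundary regime $d_y<R\le 1$ (the interior regime $R\le d_y^*$ is already handled by Lemma \ref{180719@lem1}, where the clean interior estimate \eqref{180110@eq8} without the $u$-term is available). Your proposal omits both the reduction and the Poincar\'e step, so the displayed pressure bound is unjustified as written. Once those two pieces are supplied, the rest of the argument goes through as you describe.
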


\begin{proof}
By utilizing Lemma \ref{180216@lem1} and \eqref{180702@eq1}, and following the same steps as in the proof of \eqref{180618@A1a}, one can show that \eqref{180702@eq2} holds.
Thus we only prove \eqref{180702@eq1}.
Due to \eqref{180618@A1} in Lemma \ref{180719@lem1}, it suffices to consider the case of  $d_y<R\le 1$.

Let $y\in \Omega$, $\varepsilon\in (0,1]$, and $d_y<R\le 1$.
If $\varepsilon>R/2$, then \eqref{180702@eq1} follows immediately from \eqref{180110@eq4}.
Now we assume $\varepsilon\le R/2$.
Set
$$
f_\alpha=\chi_{\Omega\setminus \overline{B_R(y)}} D_\alpha \cG_\varepsilon(\cdot,y), \quad g=\chi_{\Omega\setminus \overline{B_R(y)}} \cP_\varepsilon(\cdot,y).
$$
Then by Lemma \ref{180108@lem1}, there exists a unique solution $(u,p)\in Y^{1,2}_0(\Omega)^d\times \tilde{L}^2(\Omega)$ of the problem
$$
\left\{
\begin{aligned}
\cL^* {u}+\nabla {p}=D_\alpha f_\alpha &\quad \text{in }\, \Omega,\\
\operatorname{div} {u}=g-(g)_\Omega &\quad \text{in }\, \Omega,\\
u=0 &\quad \text{on }\, \partial \Omega,
\end{aligned}
\right.
$$
satisfying
\begin{equation}		\label{180717@A1}
\|Du\|_{L^2(\Omega)}+\|p\|_{L^2(\Omega)}
\le C\Big(\|D\cG_{\varepsilon}(\cdot,y)\|_{L^2(\Omega\setminus \overline{B_R(y)})}+\|\cP_{\varepsilon}(\cdot,y)\|_{L^2(\Omega\setminus \overline{B_R(y)})}\Big),
\end{equation}
where $C=C(d,\lambda, K_0)$.
Moreover, it follows from \eqref{180304@eq1} that (see \eqref{180703@eq1})
$$
\begin{aligned}		
\|p\|_{L^\infty(\Omega_{R/2}(y))}
&\le C R^{-d/2-1}\|u\|_{L^2(\Omega_R(y))}\\
&\quad +CR^{-d/2}\big(\|Du\|_{L^2(\Omega_R(y))}+\|p\|_{L^2(\Omega_R(y))}\big)
+C|\Omega|^{-1/2}\|g\|_{L^2(\Omega)},
\end{aligned}
$$
where $C=C(d,\lambda, \omega_A, R_0, \varrho_0)$. Hence by using the fact that
\begin{equation}		\label{180915@eq1}
R^d\le C(d, R_0, \varrho_0)|\Omega|,
\end{equation}
we have
\begin{equation}		\label{180717@A1a}
\begin{aligned}		
\|p\|_{L^\infty(\Omega_{R/2}(y))}
&\le C R^{-d/2-1}\|u\|_{L^2(\Omega_R(y))}\\
&\quad +CR^{-d/2}\big(\|Du\|_{L^2(\Omega_R(y))}+\|p\|_{L^2(\Omega_R(y))}+\|g\|_{L^2(\Omega)}\big).
\end{aligned}
\end{equation}
Observe that
\begin{equation}		\label{180703@eq2b}
\|u\|_{L^2(\Omega_{2R}(y))}\le CR\|Du\|_{L^2(\Omega_{2R}(y))},
\end{equation}
where $C=C(d,R_0, \varrho_0)$.
Indeed, if we take a point $y_0\in \partial \Omega$ satisfying $d_y=|y-y_0|<R$, then by \eqref{180703@eq5} and $(B_R(y_0)\setminus \Omega) \subset (B_{2R}(y)\setminus \Omega)$, we have
$$
|B_{2R}(y)\setminus \Omega|\ge |B_R(y_0)\setminus \Omega|\ge \theta R^d,
$$
where $\theta=\theta(d,R_0, \varrho_0)$.
This together with the boundary Poincar\'e inequality (see, for instance, \cite[Eq. (7.45)]{MR1814364}) gives \eqref{180703@eq2b}.
Combining \eqref{180717@A1}, \eqref{180717@A1a}, and \eqref{180703@eq2b}, we have
$$
\|p\|_{L^\infty(\Omega_{R/2}(y))}\le CR^{-d/2}\Big(\|D\cG_{\varepsilon}(\cdot,y)\|_{L^2(\Omega\setminus \overline{B_R(y)})}+\|\cP_{\varepsilon}(\cdot,y)\|_{L^2(\Omega\setminus \overline{B_R(y)})}\Big),
$$
where $C=C(d,\lambda, \omega_A, K_0, R_0, \varrho_0)$.
Therefore, by using the identity (see \eqref{180201@B3})
$$
\|D \cG_\varepsilon(\cdot,y)\|_{L^2(\Omega\setminus \overline{B_R(y)})}^2+\|\cP_\varepsilon(\cdot,y)\|_{L^2(\Omega\setminus \overline{B_R(y)})}^2=\int_{\Omega_\varepsilon(y)} p \Phi_{\varepsilon,y}\,dx,
$$
we conclude \eqref{180702@eq1}.
The lemma is proved.
\end{proof}

{{\bf{\em{Step 2.}}}
In this step, we prove the estimates in the theorem and Remarks \ref{180326@rmk1} and \ref{180218@rmk1}.
The estimates in the assertion $(a)$ in  Remark \ref{180218@rmk1} are simple consequences of  Lemma \ref{180324@lem1} and the weak semi-continuity.
Then by following the same steps used in Lemmas \ref{180719@lem2} and \ref{180719@lem3}, one can easily obtain the estimates in the assertions $(b)$ and $(c)$ in Remark \ref{180218@rmk1}.

To prove \eqref{180208@eq1} and \eqref{180217@C1},
let $x,y\in \Omega$ with $0<|x-y|\le 1$, and set $R=|x-y|/2$.
Since $(\cG(\cdot,y), \cP(\cdot,y))$ satisfies
$$
\left\{
\begin{aligned}
\cL \cG(\cdot,y)+\nabla \cP(\cdot,y)=0 &\quad \text{in }\, \Omega_R(x),\\
\operatorname{div} \cG(\cdot,y)=-\frac{1}{|\Omega|} &\quad \text{in }\, \Omega_R(x),\\
\cG(\cdot,y)=0 &\quad \text{on }\, \partial \Omega \cap B_R(x),
\end{aligned}
\right.
$$
by \eqref{180915@eq1}, \eqref{180304@eq1}, H\"older's inequality, and the fact that $B_R(x)\subset \Omega\setminus B_R(y)$, we have
\begin{align*}
&R^{-1}\|\cG(\cdot,y)\|_{L^\infty(\Omega_{R/2}(x))}+\|D\cG(\cdot,y)\|_{L^\infty(\Omega_{R/2}(x))}+\|\cP(\cdot,y)\|_{L^\infty(\Omega_{R/2}(x))}\\
&\le C R^{-d/2}\|\cG(\cdot,y)\|_{L^{q^*}(\Omega\setminus \overline{B_R(y)})}\\
&\quad +C R^{-d/2}\Big(\|D\cG(\cdot,y)\|_{L^{2}(\Omega\setminus \overline{B_R(y)})}+\|\cP(\cdot,y)\|_{L^2(\Omega\setminus \overline{B_R(x)})}\Big)+CR^{-d},
\end{align*}
where $q^*=2d/(d-2)$ if $d\ge 3$ and $q^*=\infty$ if $d=2$.
Here, the constant $C$ depends only on $d$, $\lambda$, $\omega_A$, $K_0$, $R_0$,  and $\varrho_0$.
From this inequality and Remark \ref{180218@rmk1} $(a)$, we get
\begin{equation}		\label{180705@eq1}
\begin{aligned}
&R^{-1}\|\cG(\cdot,y)\|_{L^\infty(\Omega_{R/2}(x))}+\|D\cG(\cdot,y)\|_{L^\infty(\Omega_{R/2}(x))}\\
&\quad +\|\cP(\cdot,y)\|_{L^\infty(\Omega_{R/2}(x))}\le CR^{-d},
\end{aligned}
\end{equation}
where $C=C(d,\lambda, \omega_A, K_0, R_0, \varrho_0)$ and $C$ depends also on $|\Omega|$ when $d=2$.
Therefore, by the continuity of $\cG(\cdot,y)$, $D\cG(\cdot,y)$, and $\cP(\cdot,y)$,  we conclude \eqref{180208@eq1} and \eqref{180217@C1}.

We now turn to the proof of the estimate in Remark \ref{180326@rmk1}.
Let $x,y\in \Omega$ with $0<|x-y|\le 1$, and set $R=|x-y|/2$.
We assume $d_x<R/2$ and extend $\cG(\cdot,y)$ by zero on $\bR^d\setminus \Omega$.
Then by taking $x_0\in \partial \Omega$ such that $d_x=|x-x_0|$, and using \eqref{180705@eq1} and  $\cG(x_0, y)=0$, we have
$$
|\cG(x,y)|=|\cG(x,y)-\cG(x_0,y)|\le C d_x \|D\cG(\cdot,y)\|_{L^\infty(B_{R/2}(x))}\le Cd_x R^{-d}.
$$
From this together with \eqref{180208@eq1}, we get
$$
|\cG(x,y)| \le C\min \{d_x, |x-y|/4\}\cdot |x-y|^{-d} ,
$$
which yields  the estimate in Remark \ref{180326@rmk1}.

{{\bf{\em{Step 3.}}}
In this step, we prove that  \eqref{180220@eq3} holds.
Let $(\cG^*, \cP^*)$ be the Green function for the pressure of the adjoint operator $\cL^*$ and $(\cG^*_{\sigma}, \cP^*_\sigma)$ be its approximated Green function.
More precisely, for given $x\in \Omega$ and $\sigma\in (0,1]$, $(\cG^*_\sigma(\cdot,x), \cP^*_\sigma(\cdot,x))\in Y^{1,2}_0(\Omega)^d\times \tilde{L}^2(\Omega)$ is a unique weak solution of the problem
\begin{equation}		\label{180222@E1}
\left\{
\begin{aligned}
\cL^* \cG_\sigma^*(\cdot,x)+\nabla \cP_\sigma^*(\cdot,x)=0 &\quad \text{in }\, \Omega,\\
\operatorname{div} \cG_\sigma^*(\cdot,x)=\Phi_{\sigma,x}-(\Phi_{\sigma,x})_\Omega & \quad \text{in }\, \Omega,\\
\cG^*_{\sigma}(\cdot,x)=0 &\quad \text{on }\, \partial \Omega,
\end{aligned}
\right.
\end{equation}
where $\Phi_{\sigma,x}$ is as in Section \ref{S_Ap}.
By proceeding similarly as in the proof of Theorem \ref{M1}, there exists a sequence $\{\sigma_\tau\}_{\tau=1}^\infty$ satisfying the natural counterparts of \eqref{180121@eq2} and \eqref{180121@eq1}.

We first claim that
\begin{equation}		\label{180220@eq4a}
\lim_{\rho\to \infty}\cP_{\varepsilon_\rho}(x,y)=\cP^*(y,x) \quad \text{for all }\, x,y\in \Omega \, \text{ with }\, x\neq y.
\end{equation}
We apply $\cG_{\sigma_\tau}^*(\cdot,x)$ and $\cG_{\varepsilon_\rho}(\cdot,y)$ as test functions to  \eqref{180110@eq2} and \eqref{180222@E1}, respectively, to get
$$
\int_\Omega \cP_{\varepsilon_\rho}(z ,y) \Phi_{\sigma_\tau,x}(z)\,dz=\int_\Omega \cP_{\sigma_\tau}^*(z,x) \Phi_{\varepsilon_\rho, y}(z)\,dz \quad \text{for any }\, x,y\in \Omega.
$$
By the continuity of $\cP_{\varepsilon_\rho}(\cdot,y)$, the left-hand side of the above inequality converges to $\cP_{\varepsilon_\rho}(x,y)$ as $\tau \to \infty$.
On the other hand, by the counterpart of \eqref{180121@eq2}, the right-hand side converges to
$$
\int_\Omega \cP^*(z,x) \Phi_{\varepsilon_\rho,y}(z)\,dz \quad \text{as }\, \tau \to \infty
$$
if $x\neq y$ and $0<\varepsilon_\rho\le \min\{1,|x-y|/2\}$.
Combining these together, we have
$$
\cP_{\varepsilon_{\rho}}(x,y)=\int_\Omega \cP^*(z,x)\Phi_{\varepsilon_\rho,y}(z)\,dz \quad \text{if }\, 0<\varepsilon_{\rho}\le \min\{1,|x-y|/2\},
$$
and thus, from the continuity of $\cP^*(\cdot,x)$ on $\Omega\setminus \{x\}$, we get the claim \eqref{180220@eq4a}.

Next, we claim that
\begin{equation}		\label{180325@eq2}
\cP^*(y,\cdot)\in L^1_{\rm{loc}}(\overline{\Omega}\setminus \{y\})
\quad \text{for any }\, y\in \Omega.
 \end{equation}
Let $x,y\in \Omega$ with $x\neq y$,
$0<R\le \min\{1,|x-y|/2\}$, and $\varepsilon_\rho\in (0, R]$.
Since it holds that
$$
\left\{
\begin{aligned}
\cL \cG_{\varepsilon_\rho}(\cdot,y)+\nabla \cP_{\varepsilon_\rho}(\cdot,y)=0 &\quad \text{in }\, \Omega_R(x),\\
\operatorname{div} \cG_{\varepsilon_\rho}(\cdot,y)=-|\Omega|^{-1} & \quad \text{in }\, \Omega_R(x),\\
\cG_{\varepsilon_\rho}(\cdot,y)=0 &\quad \text{on }\, \partial \Omega\cap B_R(x),
\end{aligned}
\right.
$$
by using \eqref{180304@eq1} and Lemma \ref{180324@lem1}, we have
\begin{align*}
&\|\cP_{\varepsilon_\rho}(\cdot,y)\|_{L^\infty(\Omega_{R/2}(x))}\\
&\le C \big(  \|\cG_{\varepsilon_\rho}(\cdot,y)\|_{W^{1,1}(\Omega_R(x))}+\|\cP_{\varepsilon_\rho}(\cdot,y)\|_{L^1(\Omega_R(x))}+1\big)\\
&\le C \big(\|\cG_{\varepsilon_\rho}(\cdot,y)\|_{W^{1,1}(\Omega \setminus B_R(y))}+  \|\cP_{\varepsilon_\rho}(\cdot,y)\|_{L^1(\Omega\setminus B_R(y))}+1\big)\\
&\le C.
\end{align*}
From this inequality together with \eqref{180220@eq4a} and the dominated convergence theorem, we see that $\cP^*(y, \cdot)\in L^1(\Omega_{R/2}(x))$ and
\begin{equation}		\label{180325@eq1}
\lim_{\rho\to \infty}\int_{\Omega_{R/2}(x)} \cP_{\varepsilon_\rho}(z,y)\,dz=\int_{\Omega_{R/2}(x)}\cP^*(y,z)\,dz<\infty.
\end{equation}
Since $\cP^*(y, \cdot)\in L^1(\Omega_{R/2}(x))$ for all $x\in \Omega\setminus \{y\}$ and $0<R\le \min\{1,|x-y|/2\}$,
we get \eqref{180325@eq2}.

We are ready to complete the proof of \eqref{180220@eq3}.
Fix $y\in \Omega$, and let $x\in \Omega\setminus \{y\}$ be a Lebesgue point of $\cP^*(y,\cdot)$.
For $0<R\le \min\{1,|x-y|/2\}$, we see that  (using \eqref{180121@eq2})
$$
\lim_{\rho\to \infty}\int_{\Omega_{R/2}(x)} \cP_{\varepsilon_\rho}(z,y)\,dz=\int_{\Omega_{R/2}(x)}\cP(z,y)\,dz.
$$
Combining this identity and \eqref{180325@eq1}, we get
$$
\dashint_{\Omega_{R/2}(x)}\cP(z,y)\,dz=\dashint_{\Omega_{R/2}(x)}\cP^*(y,z)\,dz.
$$
Therefore, by taking $R\to 0$, and using the continuity of $\cP(\cdot,y)$ on $\Omega\setminus \{y\}$, we concluded that
$$
\cP(x,y)=\cP^*(y,x).
$$
This implies \eqref{180220@eq3}.
The theorem is proved.
\qed

\subsection{Proof of Theorem \ref{M5}}
We only consider the case when $\Omega=\bR^d_+$ with $d\ge 2$.
Let $(\cG, \cP)$ be the Green function for the pressure of $\cL$ in $\Omega$ constructed in Theorem \ref{M1}.
Then by the same reasoning as in the proof of the estimates in Remark \ref{180218@rmk1} (using Lemma \ref{180227@lem2} instead of Lemma \ref{180208@lem1}), we have the following estimates for any $y\in \Omega$:
\begin{enumerate}[$(a)$]
\item
For any $R\in (0,1] $, we have that
$$
\|\cG(\cdot,y)\|_{L^{q^*}(\Omega\setminus \overline{B_R(y)})}\le CR^{-d/2},
$$
$$
\|D\cG(\cdot,y)\|_{L^{2}(\Omega\setminus \overline{B_R(y)})}+\|\cP(\cdot,y)\|_{L^{2}(\Omega\setminus \overline{B_R(y)})}\le CR^{-d/2},
$$
where $q^*=2d/(d-2)$ if $d\ge 3$ and $q^*=\infty$ if $d=2$.
\item
For any $t>0$, we have that
$$
\big|\{x\in \Omega:|\cG(x,y)> t\}\big|\le Ct^{-d/(d-1)},
$$
$$
\big|\{x\in \Omega:|D_x \cG (x,y)|>t\}\big|+\big|\{x\in \Omega:| \cP (x,y)|>t\}\big|\le Ct^{-1}.
$$
\item
For any $R\in (0,1]$, we have that
$$
\|\cG (\cdot,y)\|_{L^q(\Omega_R(y))}\le C_qR^{1-d+d/q}, \quad 0< q<\frac{d}{d-1},
$$
$$
\|D\cG (\cdot,y)\|_{L^q(\Omega_R(y))}+\|\cP (\cdot,y)\|_{L^q(\Omega_R(y))}\le C_q R^{-d+d/q}, \quad 0<q<1.
$$
\end{enumerate}
In the above, $C=C(d,\lambda, \omega_A)$ and $C_q$ depends also on $q$.
Observe from Definition \ref{D0} $(b)$ that for any $x\in \Omega$ and $R\in (0,1]$ satisfying $|x-y|\ge R$,  $(\cG(\cdot,y), \cP(\cdot,y))$ satisfies
$$
\left\{
\begin{aligned}
\cL \cG(\cdot,y)+\nabla \cP(\cdot,y)=0 &\quad \text{in }\, \Omega_R(x),\\
\operatorname{div}\cG(\cdot,y)=0 &\quad \text{in }\, \Omega_R(x),\\
\cG(\cdot,y)=0 &\quad \text{on }\, \partial \Omega \cap B_R(x).
\end{aligned}
\right.
$$
By Lemma \ref{180227@lem2}, H\"older's inequality, and the estimates in $(a)$, we obtain that
\begin{align}
\nonumber
&R^{-1}\|\cG(\cdot,y)\|_{L^\infty(\Omega_{R/4}(x))}+\|D\cG(\cdot,y)\|_{L^\infty(\Omega_{R/4}(x))}+\|\cP(\cdot,y)\|_{L^\infty(\Omega_{R/4}(x))}\\
\nonumber
&\le CR^{-d/2}\|\cG(\cdot,y)\|_{L^{q^*}(\Omega_{R/4}(x))}\\
\nonumber
&\quad +CR^{-d/2}\big(\|D\cG(\cdot,y)\|_{L^2(\Omega_{R/2}(x))}+\|\cP(\cdot,y)\|_{L^2(\Omega_{R/2}(x))}\big)\\
\nonumber
&\le C R^{-d/2}\|\cG(\cdot,y)\|_{L^{q^*}(\Omega\setminus \overline{B_{R/2}(y)})}\\
\nonumber
&\quad +C R^{-d/2}\big(\|D\cG(\cdot,y)\|_{L^2(\Omega\setminus \overline{B_{R/2}(y)})}+\|\cP(\cdot,y)\|_{L^2(\Omega\setminus \overline{B_{R/2}(y)})}\big)\\
\label{180309@A2}
&\le CR^{-d},
\end{align}
where $C=C(d,\lambda,\omega_A)$.
Since the above inequality holds for any $x\in \Omega$ and $R\in (0,1]$ satisfying $|x-y|\ge R$,  we see that
$$
(\cG(\cdot,y), \cP(\cdot,y))\in W^{1,\infty}(\Omega\setminus \overline{B_r(y)})^d\times L^\infty(\Omega\setminus \overline{B_r(y)}), \quad \forall r>0,
$$
which gives \eqref{180724@A1}.

To verify \eqref{180227@eq1} and \eqref{180302@eq2}, let $x,y\in \Omega$ with $0<|x-y|=R\le 1$.
Then we get \eqref{180302@eq2} from \eqref{180309@A2} immediately.
We also have that
\begin{equation}		\label{180326@eq1}
|G(x,y)|\le C R^{1-d}= C|x-y|^{1-d}.
\end{equation}
If $d_x<R/4$, then we  take $x_0\in \partial \Omega$ such that $\operatorname{dist}(x,x_0)=d_x$.
Since $\cG(x_0,y)=0$, we obtain by \eqref{180309@A2} that
$$
|\cG(x,y)|=|\cG(x,y)-\cG(x_0, y)|\le Cd_x \|D\cG(\cdot,y)\|_{L^\infty (\Omega_{R/4}(x))}\le Cd_x |x-y|^{-d}.
$$
This together with \eqref{180326@eq1} yields that
$$
|\cG(x,y)|\le C\min\{d_x,|x-y|/4\} \cdot |x-y|^{-d},
$$
which gives \eqref{180227@eq1}.
The theorem is proved.
\qed

\section{Green function for the flow velocity}	\label{S7}
In this section, we deal with Green function and fundamental solution for the flow velocity of Stokes system.
In the definitions below, $G=G(x,y)$ is a $d\times d$ matrix-valued function and $\Pi=\Pi(x,y)$ is a $d\times 1$ vector-valued function on $\Omega\times \Omega$.

\begin{definition}[Green function for the flow velocity]		\label{D2}
Let $\Omega$ be a domain in $\bR^d$.
We say that $(G, \Pi)$ is a Green function for the flow velocity of $\cL$ in $\Omega$ if it satisfies the following properties.
\begin{enumerate}[$(a)$]
\item
For any $y\in \Omega$ and $R>0$,
$$
G(\cdot,y)\in W^{1,1}_{\rm{loc}} (\Omega)^{d\times d}, \quad (1-\eta)G(\cdot,y)\in Y^{1,2}_0(\Omega)^{d\times d},
$$
where $\eta$ is a smooth function satisfying $\eta\equiv 1$ on $B_R(y)$.
Moreover,
$$
\Pi(\cdot,y)\in L^1_{\rm{loc}}(\Omega)^d \cap L^2(\Omega\setminus \overline{B_R(y)})^d.
$$
\item
For any $y\in \Omega$, $(G(\cdot,y), \Pi(\cdot,y))$ satisfies
$$
\left\{
\begin{aligned}
\cL G(\cdot,y)+\nabla \Pi(\cdot,y)=\delta_y I &\quad \text{in }\, \Omega, \\
\operatorname{div} G(\cdot,y)=0 &\quad \text{in }\, \Omega,\\
G(\cdot,y)=0 &\quad \text{on }\, \partial \Omega.
\end{aligned}
\right.
$$
\item
If $(u, p)\in Y^{1,2}_0(\Omega)^d\times \tilde{L}^2(\Omega)$ is a weak solution of the problem
$$
\left\{
\begin{aligned}
\cL^* u+\nabla p=f +D_\alpha f_\alpha&\quad \text{in }\, \Omega,\\
\operatorname{div} u=g-(g)_{\Omega} &\quad \text{in }\, \Omega,\\
u=0 &\quad \text{on }\, \partial \Omega,
\end{aligned}
\right.
$$
where $f, f_\alpha\in C^\infty_0(\Omega)^d$,  and $g\in C^\infty_0(\Omega)$,
then for a.e. $y\in \Omega$, we have
\begin{equation}		\label{180222@A1}
u(y)=\int_\Omega G(x,y)^{\top} f(x)\,dx-\int_\Omega D_\alpha G(x,y)^{\top}f_\alpha(x)\,dx-\int_\Omega \Pi(x,y)g(x)\,dx.
\end{equation}
\end{enumerate}
The Green function for the adjoint operator $\cL^*$ is defined similarly.
The Green function in $\Omega=\bR^d$ is called {\em{the fundamental solution}}.
\end{definition}

\begin{remark}
In the definitions above,
$\cL G(\cdot,y)+\nabla \Pi(\cdot,y)=\delta_y I$ is understood as
$$
\int_\Omega A^{\alpha\beta}_{ij} D_\beta G^{j k}(x,y)D_\alpha \phi^i\,dx+\int_\Omega \Pi^k (x,y)\operatorname{div} \phi\,dx=-\phi^k (y)
$$
for any $\phi\in C^\infty_0(\Omega)^d$ and $k\in \{1,\ldots,d\}$.
\end{remark}

\subsection{Main results}		

In this subsection, we state the main results concerning Green function $(G, \Pi)$ for the flow velocity.
Note that in \cite{MR3693868}, the authors proved the global pointwise bound
$$
|G(x,y)|\le C|x-y|^{2-d}, \quad x\neq y,
$$
when the coefficients are VMO in a bounded $C^1$ domain.
See also \cite{MR3670039} for the corresponding results in unbounded domains.
In the theorem below, we extend the results in \cite{MR3693868} and \cite{MR3670039} by showing the pointwise bounds \eqref{180328@C1} and  \eqref{180328@C2} under the stronger assumption that the coefficients are of Dini mean oscillation in a domain having a $C^{1,\rm{Dini}}$ boundary.

\begin{theorem}		\label{M3}
Let $d\ge 3$ and $\Omega$ be a domain in $\bR^d$ having a $C^{1,\rm{Dini}}$ boundary as in Definition \ref{D6}.
Suppose that the coefficients $A^{\alpha\beta}$ of $\cL$ are of Dini mean oscillation in $\Omega$ satisfying Definition \ref{D1} $(b)$ with a Dini function $\omega=\omega_A$.
Then under Assumption \ref{A1}, there exists a unique Green function $(G, \Pi)$ for the flow velocity  of $\cL$ in $\Omega$ such that
for any $y\in \Omega$,
\begin{equation}		\label{180730@eq1}
G(\cdot,y)\, \text{ is continuously differentiable in }\, \overline{\Omega}\setminus \{y\}
\end{equation}
and
\begin{equation}		\label{180730@eq2}
\Pi(\cdot,y) \, \text{ is continuous in }\, \overline{\Omega}\setminus \{y\}.
\end{equation}
Moreover, for any $x,y\in \Omega$ with $0<|x-y|\le 1$, we have that
\begin{equation}		\label{180328@C1}
|G(x,y)|\le C \min\{d_x, |x-y|\}\cdot \min\{d_y,|x-y|\} \cdot |x-y|^{-d},
\end{equation}
\begin{equation}		\label{180328@C2}
|D_xG(x,y)|+|\Pi(x,y)|\le C|x-y|^{1-d},
\end{equation}
where $C=C(d,
\lambda,\omega_A, K_0, R_0, \varrho_0)$.
Furthermore, if $(G^*, \Pi^*)$ is the Green function for the flow velocity of $\cL^*$ in $\Omega$, then we have
\begin{equation}		\label{180727@A2}
G(x,y)=G^*(y,x)^{\top} \quad \text{for all }\, x,y\in \Omega, \quad x\neq y.
\end{equation}
\end{theorem}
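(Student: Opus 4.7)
The plan is to parallel the construction and analysis of the pressure Green function in Sections \ref{S_Ap}--\ref{S_Pr}, adapted to the velocity Green function $(G,\Pi)$. First I would construct approximated Green functions. For $y\in\Omega$ and $\varepsilon\in(0,1]$, let $(G_\varepsilon(\cdot,y),\Pi_\varepsilon(\cdot,y))\in Y^{1,2}_0(\Omega)^{d\times d}\times\tilde L^2(\Omega)^d$ be the unique weak solution, provided by Lemma \ref{180108@lem1}, of
\[
\cL G_\varepsilon(\cdot,y)+\nabla\Pi_\varepsilon(\cdot,y)=\Phi_{\varepsilon,y}I,\quad \operatorname{div}G_\varepsilon(\cdot,y)=0\text{ in }\Omega,\quad G_\varepsilon(\cdot,y)=0\text{ on }\partial\Omega,
\]
with $\Phi_{\varepsilon,y}$ as in Section \ref{S_Ap}; the $k$-th column satisfies the analogous system with source $\Phi_{\varepsilon,y}e_k$.

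Next, uniform pointwise bounds on $(G_\varepsilon,D_xG_\varepsilon,\Pi_\varepsilon)$ would be derived by duality. If $(u,p)\in Y^{1,2}_0(\Omega)^d\times\tilde L^2(\Omega)$ solves $\cL^*u+\nabla p=f+D_\alpha f_\alpha$ and $\operatorname{div}u=g-(g)_\Omega$ with homogeneous boundary data, then testing both systems against each other leads to an identity of the form \eqref{180222@A1} with $G_\varepsilon$ in place of $G$, which reduces pointwise control of $(G_\varepsilon,D_xG_\varepsilon,\Pi_\varepsilon)$ to $L^\infty$-control of $(u,Du,p)$ for the adjoint Stokes problem. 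The bound $|G_\varepsilon(x,y)|\le C|x-y|^{2-d}$ follows from the $L^\infty$-estimate of $u$ alone, exactly as in \cite{MR3693868, MR3670039}. The key new bounds
\[
|D_x G_\varepsilon(x,y)|+|\Pi_\varepsilon(x,y)|\le C|x-y|^{1-d}
\]
come from the global $W^{1,\infty}$ and $C^1$-estimates of \cite{MR3912724, MR3890946}, which are valid precisely under Dini mean oscillation of the coefficients and $C^{1,\rm{Dini}}$ regularity of $\partial\Omega$. Analogues of Lemmas \ref{180324@lem1} and \ref{180719@lem2} would be recorded for $(G_\varepsilon,\Pi_\varepsilon)$.

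I would then pass to the limit $\varepsilon_\rho\to 0$ via weak compactness in $Y^{1,2}_0$ and in $L^2(\Omega\setminus\overline{B_R(y)})^d$, with a diagonal argument across $R>0$, to extract $(G(\cdot,y),\Pi(\cdot,y))$ satisfying (a)--(b) of Definition \ref{D2}. Applying the same global $C^1$-estimate to the homogeneous system satisfied by $(G(\cdot,y),\Pi(\cdot,y))$ on $\overline\Omega\setminus\overline{B_R(y)}$ delivers the regularity statements \eqref{180730@eq1}--\eqref{180730@eq2} together with the bound \eqref{180328@C2}. Property (c) and uniqueness would follow by testing the approximating equations against the solution of the adjoint problem and passing to the limit, as in the proof of Theorem \ref{M1}.

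Finally, the symmetry \eqref{180727@A2} would be proved by the standard Green-identity argument: testing the equation for $G_\varepsilon^k(\cdot,y)$ against $(G^*_\sigma)^j(\cdot,x)$ and vice versa, the divergence-free condition makes both pressure terms vanish, producing
\[
\int_\Omega\Phi_{\varepsilon,y}(z)\,(G^*_\sigma)_{kj}(z,x)\,dz=\int_\Omega\Phi_{\sigma,x}(z)\,(G_\varepsilon)_{jk}(z,y)\,dz.
\]
Letting $\sigma\to 0$ then $\varepsilon\to 0$ and using continuity of $G$ and $G^*$ off the diagonal yields $G^*_{kj}(y,x)=G_{jk}(x,y)$, i.e.\ \eqref{180727@A2}. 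For the improved bound \eqref{180328@C1}, start from $|G(x,y)|\le C|x-y|^{2-d}$; when $d_x<|x-y|/2$, using $G(x_0,y)=0$ at the nearest boundary point $x_0$ together with \eqref{180328@C2} yields the factor $\min\{d_x,|x-y|\}$, and the symmetric factor $\min\{d_y,|x-y|\}$ comes from applying the same argument to $G^*(\cdot,x)$ and invoking \eqref{180727@A2}. The main obstacle will be establishing the uniform-in-$\varepsilon$ global $W^{1,\infty}$ and $C^1$-bounds on $(G_\varepsilon,\Pi_\varepsilon)$ that match the expected singularity profile of the true Green function, so that these estimates transfer cleanly to the limit and simultaneously give \eqref{180328@C2} and the up-to-the-boundary regularity in \eqref{180730@eq1}--\eqref{180730@eq2}.
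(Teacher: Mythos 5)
Your overall architecture (construct $(G_\varepsilon,\Pi_\varepsilon)$, establish uniform-in-$\varepsilon$ pointwise bounds via duality and the $W^{1,\infty}$/$C^1$-estimates of \cite{MR3912724,MR3890946}, pass to the limit, then prove symmetry) is a valid alternative to what the paper does. The paper itself does not reconstruct approximated Green functions for the flow velocity: it verifies that Assumption $(\mathbf{A1})$ of \cite{MR3693868} holds (with H\"older exponent $\mu=1$) under the present hypotheses and then invokes \cite[Theorem~2.3]{MR3693868} when $|\Omega|<\infty$ and \cite[Theorem~10.4]{MR3670039} when $|\Omega|=\infty$ to obtain existence, Definition~\ref{D2}~$(a)$--$(c)$, and the symmetry \eqref{180727@A2} all at once; the paper then only has to upgrade the pointwise bounds. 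Your route is heavier because you must rebuild property $(c)$, uniqueness, and symmetry from the approximation scheme, but it is not wrong.

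There is, however, a genuine gap in your derivation of the two-sided bound \eqref{180328@C1}. From $G(x_0,y)=0$ at a nearest boundary point together with the gradient bound \eqref{180328@C2} you obtain
\[
|G(x,y)|\le C\min\{d_x,|x-y|\}\,|x-y|^{1-d},
\]
and by symmetry applied to $G^*$ you obtain separately
\[
|G(x,y)|\le C\min\{d_y,|x-y|\}\,|x-y|^{1-d}.
\]
Taking the minimum (or even the geometric mean) of these two one-sided estimates gives at best $C\sqrt{d_xd_y}\,|x-y|^{1-d}$, not the claimed product $C\,d_xd_y\,|x-y|^{-d}$. To extract the product you need a second pass: with $R=|x-y|/2$ and $d_x<R/16$, apply the boundary Caccioppoli inequality together with the boundary $W^{1,\infty}$-estimate \eqref{180304@eq1} to the homogeneous system satisfied by $(G(\cdot,y),\Pi(\cdot,y))$ on $\Omega_R(x)$, obtaining
\[
|G(x,y)|\le d_x\,\|DG(\cdot,y)\|_{L^\infty(\Omega_{R/16}(x))}\le C\,d_x\,R^{-1}\,\|G(\cdot,y)\|_{L^\infty(\Omega_R(x))},
\]
and only then feed the already-established one-sided bound $|G(z,y)|\le C\min\{d_y,|x-y|\}\,|x-y|^{1-d}$ (valid uniformly for $z\in\Omega_R(x)$ since $R<|z-y|<3R$) into the right-hand side. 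This iteration is exactly how the paper produces \eqref{180328@C1}, following the idea of \cite[Theorem~3.13]{MR2718661}. Without it, your argument stops at the one-sided estimates.
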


The corresponding results for the case with $d=2$ was proved in \cite{MR3906316}.

\begin{theorem}{\cite[Theorems 3.2 and 3.7]{MR3906316}}		\label{180729@thm1}
Let  $\Omega$ be a bounded domain in $\bR^2$ having a $C^{1,\rm{Dini}}$ boundary as in Definition \ref{D6}.
Suppose that the coefficients $A^{\alpha\beta}$ of $\cL$ are of Dini mean oscillation in $\Omega$ satisfying Definition \ref{D1} $(b)$ with a Dini function $\omega=\omega_A$.
Then there exists a unique Green function $(G, \Pi)$ for the flow velocity  of $\cL$ in $\Omega$ such that
for any $y\in \Omega$,
$$
G(\cdot,y)\, \text{ is continuously differentiable in }\, \overline{\Omega}\setminus \{y\}
$$
and
$$
\Pi(\cdot,y) \, \text{ is continuous in }\, \overline{\Omega}\setminus \{y\}.
$$
Moreover, for any $x,y\in \Omega$ with $x\neq y$, we have
$$
|G(x,y)|\le C\bigg(1+\log\bigg(\frac{\operatorname{diam}(\Omega)}{|x-y|}\bigg)\bigg),
$$
$$
|D_xG(x,y)|+|\Pi(x,y)|\le C|x-y|^{-1},
$$
where $C=C(
\lambda,\omega_A, R_0, \varrho_0, \operatorname{diam}(\Omega))$.
Furthermore, if $(G^*, \Pi^*)$ is the Green function for the flow velocity of $\cL^*$ in $\Omega$, then we have
\begin{equation}		\label{180729@eq2}
G(x,y)=G^*(y,x)^{\top} \quad \text{for all }\, x,y\in \Omega, \quad x\neq y.
\end{equation}
\end{theorem}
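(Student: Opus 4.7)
Since this statement is quoted from \cite{MR3906316}, my plan would follow the strategy used there, adapted to match the framework of the present paper (in particular, the approximated Green function scheme of Section \ref{S_Ap} and the pointwise regularity results in \cite{MR3912724, MR3890946} already invoked for Theorems \ref{M1}--\ref{M2}). I would first mollify the Dirac delta: for each $y\in\Omega$ and $\varepsilon\in(0,1]$, use Lemma \ref{180108@lem1} (two-dimensional bounded case) to produce $(G_\varepsilon(\cdot,y),\Pi_\varepsilon(\cdot,y))\in Y^{1,2}_0(\Omega)^{d\times d}\times\tilde L^2(\Omega)^d$ solving the problem obtained by replacing $\delta_yI$ in Definition \ref{D2}(b) with $\Phi_{\varepsilon,y}I$. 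Symmetry of the construction with respect to $\cL$ and $\cL^*$ will be recorded for use in the final step.

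The next step is to derive $\varepsilon$-uniform estimates on $(G_\varepsilon,\Pi_\varepsilon)$ away from the pole. For each fixed column, testing the $\cL^*$-problem against $G_\varepsilon(\cdot,y)$ and duality against a suitable $f$ yields an $L^1$-bound $\|G_\varepsilon(\cdot,y)\|_{L^1(\Omega_R(x))}\le CR^{2}(1+\log(R^{-1}))$ coming from the Green-domain $L^\infty$-estimate in two dimensions (this logarithm is the origin of the logarithmic main term in the theorem). From this, a reverse H\"older argument as in Lemma \ref{180723@lem1}, combined with the higher-integrability $L^{q_0}$ estimate of Lemma \ref{180913@lem1}, will give the weak-type bounds
\[
|\{|G_\varepsilon(\cdot,y)|>t\}|\le C t^{-2/(d-2)}\cdot(\ldots)\,,\qquad |\{|D_xG_\varepsilon|>t\}|+|\{|\Pi_\varepsilon|>t\}|\le Ct^{-2},
\]
where the $d=2$ version of the first inequality is replaced by a cut-off giving logarithmic integrals. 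Plugging these into the interior and boundary pointwise estimates from Lemma \ref{180208@lem1} / \ref{180304@eq1} (which require precisely the Dini-mean-oscillation hypothesis on $A^{\alpha\beta}$ and the $C^{1,\mathrm{Dini}}$ boundary hypothesis on $\Omega$) yields the key estimates
\[
|G_\varepsilon(x,y)|\le C(1+\log(\operatorname{diam}(\Omega)/|x-y|)),\qquad |D_xG_\varepsilon(x,y)|+|\Pi_\varepsilon(x,y)|\le C|x-y|^{-1}
\]
uniformly in $\varepsilon\le|x-y|/2$.

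Having these uniform bounds, I would pass to the limit along a subsequence $\varepsilon_\rho\to 0$ using weak compactness (in $Y^{1,2}$ on $\Omega\setminus\overline{B_R(y)}$ and in $L^q_{\mathrm{loc}}$ near $y$ for $q<2$), together with the Arzel\`a--Ascoli-type argument of Lemma \ref{180223@lem1} (enabled by the $C^1$-estimates of \cite{MR3890946}) to produce the limiting pair $(G(\cdot,y),\Pi(\cdot,y))$. The $C^{1,\mathrm{Dini}}$ boundary assumption then upgrades the uniform bounds into continuous differentiability up to $\partial\Omega$ and continuity of $\Pi(\cdot,y)$ up to $\partial\Omega$, off the pole. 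Verification of the three defining properties in Definition \ref{D2} proceeds along the same lines as in the proof of Theorem \ref{M1}: (a) follows from the weak limits, (b) by testing with $\phi\in C^\infty_0(\Omega\setminus\{y\})^d$ and $\varphi\in C^\infty_0(\Omega)$, and (c) (the identity \eqref{180222@A1}) from duality by testing the $\varepsilon$-problem for $G_\varepsilon^*$ against the unique Lax--Milgram solution $(u,p)$ of the $\cL^*$-system.

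Uniqueness and the symmetry \eqref{180729@eq2} will follow from the $C^\infty_0$-density argument in Section \ref{S2-3}: applying (c) with $f=\Phi_{\varepsilon,x}I$ and interchanging the roles of $\cL$ and $\cL^*$ gives $G_\varepsilon(x,y)=G^*_\sigma(y,x)^\top$ after a double mollification, and letting $\varepsilon,\sigma\to 0$ with the aid of the continuity statements established above yields \eqref{180729@eq2} pointwise off the diagonal. The main obstacle I anticipate is the two-dimensional $L^1$-estimate producing the logarithmic factor: the standard Sobolev embedding $Y^{1,2}\hookrightarrow L^{q^*}$ with $q^*=\infty$ fails quantitatively in $d=2$, so one must carefully carry the logarithmic $\log(\operatorname{diam}(\Omega)/R)$ factor through the reverse H\"older iteration and show that it does not propagate to the derivative estimate; this is precisely where the Green-domain condition \eqref{180619@eq3} on $\Omega$ (guaranteed by the $C^{1,\mathrm{Dini}}$ boundary via \eqref{180703@eq5}) and the higher-integrability constant $q_0>2$ from Lemma \ref{180723@lem1} are crucial.
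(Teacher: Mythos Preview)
The paper does not supply its own proof of this theorem: it is stated verbatim as a quotation from \cite[Theorems 3.2 and 3.7]{MR3906316}, and the surrounding text simply says ``The corresponding results for the case with $d=2$ was proved in \cite{MR3906316}.'' There is therefore nothing in the present paper to compare your proposal against beyond the general framework it develops for the pressure Green function and for the $d\ge 3$ flow-velocity case in Section~\ref{S7-2}.

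That said, your outline is broadly in the spirit of \cite{MR3906316} and of the approximation scheme used elsewhere in this paper, so the overall strategy (mollify $\delta_y$, obtain $\varepsilon$-uniform estimates via duality and the $L^{q_0}$-regularity of Lemma~\ref{180913@lem1}, pass to the limit, then apply the $C^1$-regularity of Lemma~\ref{180208@lem1}) is sound. Two points of caution: first, the displayed weak-type exponent $t^{-2/(d-2)}$ is meaningless when $d=2$, and in fact the two-dimensional argument in \cite{MR3906316} does not proceed via a weak-$L^p$ bound on $G_\varepsilon$ itself but rather controls $DG_\varepsilon$ and $\Pi_\varepsilon$ in $L^2(\Omega\setminus B_R(y))$ directly and then integrates the resulting $|x-y|^{-1}$ bound on $D_xG$ to obtain the logarithm. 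Second, the $L^1$-bound you write as $CR^2(1+\log(R^{-1}))$ is not what drives the proof; the logarithmic factor in the final estimate comes from integrating the derivative bound $|D_xG|\le C|x-y|^{-1}$ along a path from a boundary point (where $G$ vanishes) to $x$, not from an $L^1$ estimate on $G_\varepsilon$.
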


Based on Theorems \ref{M2}, \ref{M3}, and \ref{180729@thm1}, we have the following corollary, the proof of which is given  in Section \ref{S7-3}.

\begin{corollary}		\label{180727@cor1}
Suppose that  the same hypothesis of Theorem \ref{M3} (resp. Theorem \ref{180729@thm1}) hold.
Let $(G, \Pi)$ and $(\cG, \cP)$ be the Green functions for the flow velocity and the pressure of $\cL$ in $\Omega$ derived from Theorems \ref{M3} (resp. Theorem \ref{180729@thm1}) and \ref{M1}, respectively.
Then for $f\in C^\infty_0(\Omega)^d$, the pair $(u, p)$ given by
\begin{equation}		\label{180729@eq5}
u(y)=\int_\Omega G(x,y)^{\top} f(x)\,dx, \quad p(y)=-\int_\Omega \cG(x,y)\cdot f(x)\,dx
\end{equation}
is a unique weak solution in $Y^{1,2}_0(\Omega)^d\times \tilde{L}^2(\Omega)$ of the problem
$$
\left\{
\begin{aligned}
\cL^* u+\nabla p=f &\quad \text{in }\, \Omega,\\
\operatorname{div} u=0 &\quad \text{in }\, \Omega,\\
u=0 &\quad \text{on }\, \partial \Omega.
\end{aligned}
\right.
$$
Moreover, if we define $(d+1)\times (d+1)$ matrix-valued functions by
$$
\mathbf{G}=
\begin{pmatrix}
G^{11} & G^{12} & \ldots & G^{1d} &-\cG^{1}\\
\vdots & \vdots & \ddots & \vdots & \vdots\\
G^{d1} & G^{d2} & \ldots & G^{dd} & -\cG^{d}\\
\Pi^1 & \Pi^2 & \ldots & \Pi^d & \cP
\end{pmatrix}
$$
and
$$
\mathbf{G^*}=
\begin{pmatrix}
(G^*)^{11} & (G^*)^{12} & \ldots & (G^*)^{1d} &-(\cG^*)^{1}\\
\vdots & \vdots & \ddots & \vdots & \vdots\\
(G^*)^{d1} & (G^*)^{d2} & \ldots & (G^*)^{dd} & -(\cG^*)^{d}\\
(\Pi^*)^1 & (\Pi^*)^2 & \ldots & (\Pi^*)^d & \cP^*
\end{pmatrix},
$$
where $(G^*, \Pi^*)$ and $(\cG^*, \cP^*)$ are the Green functions for the flow velocity and the pressure of $\cL^*$ in $\Omega$, respectively, then for any $y\in \Omega$, there exists a measure zero set $N_y\subset\Omega$ containing $y$ such that
we have
\begin{equation}		\label{180727@A1}
\mathbf{G}(x,y)=\mathbf{G}^*(y,x)^{\top} \quad \text{for all }\, x\in \Omega\setminus N_y.
\end{equation}
\end{corollary}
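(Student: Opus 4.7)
The plan is to treat the two assertions of the corollary separately. I would start with the representation formula. By Lemma \ref{180108@lem1} applied to $\cL^*$ with $f_\alpha=0$ and $g=0$, there exists a unique $(\tilde u,\tilde p)\in Y^{1,2}_0(\Omega)^d\times \tilde L^2(\Omega)$ solving the adjoint Stokes system with source $f\in C^\infty_0(\Omega)^d$. Then I would invoke property $(c)$ of Definition \ref{D2} for $(G,\Pi)$ (with $f_\alpha=0$, $g=0$) to obtain $\tilde u(y)=\int_\Omega G(x,y)^{\top} f(x)\,dx$ for a.e.\ $y$, and property $(c)$ of Definition \ref{D0} for $(\cG,\cP)$ (with $g\equiv 0$, so the excluded region $\overline{\operatorname{supp} g}$ is empty) to obtain $\tilde p(y)=-\int_\Omega \cG(x,y)\cdot f(x)\,dx$ for a.e.\ $y\in\Omega$. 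Comparing with \eqref{180729@eq5} gives $(u,p)=(\tilde u,\tilde p)$ almost everywhere, so $(u,p)$ is indeed the unique weak solution in the claimed class.

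For the matrix identity $\mathbf{G}(x,y)=\mathbf{G}^*(y,x)^{\top}$, I would decompose it into four separate identities: the flow-velocity block $G(x,y)=G^*(y,x)^{\top}$, the pressure-pressure entry $\cP(x,y)=\cP^*(y,x)$, and the two cross identities $\Pi(x,y)=-\cG^*(y,x)$ and $\cG(x,y)=-\Pi^*(y,x)$. The first is precisely \eqref{180727@A2} in Theorem \ref{M3} (resp.\ \eqref{180729@eq2} in Theorem \ref{180729@thm1}) and holds for all $x\neq y$. The second is \eqref{180220@eq3} in Theorem \ref{M2}, and is the source of the measure-zero exceptional set $N_y$.

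The main work lies in the two cross identities. To prove $\Pi(x,y)=-\cG^*(y,x)$, I would apply property $(c)$ of $(G,\Pi)$ to the pair $(\cG^*_\sigma(\cdot,x),\cP^*_\sigma(\cdot,x))$ from \eqref{180222@E1}, which for $\sigma<d_x$ solves the adjoint Stokes system with $f=f_\alpha=0$ and $g=\Phi_{\sigma,x}\in C^\infty_0(\Omega)$. This yields
\[
\cG^*_\sigma(y,x)=-\int_\Omega \Pi(z,y)\,\Phi_{\sigma,x}(z)\,dz \quad\text{for a.e. }y\in\Omega.
\]
Letting $\sigma=\sigma_\tau\to 0$ along the subsequence from the construction of $\cG^*$ in the proof of Theorem \ref{M1}, the analog of Lemma \ref{180223@lem1} gives $\cG^*_{\sigma_\tau}(y,x)\to \cG^*(y,x)$ uniformly on compact subsets of $\Omega\setminus\{x\}$, while the continuity of $\Pi(\cdot,y)$ on $\overline{\Omega}\setminus\{y\}$ from \eqref{180730@eq2} forces the right-hand side to converge to $-\Pi(x,y)$ for every $y\neq x$. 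Intersecting the full-measure sets over $\tau$ and extending by continuity of both sides in $y$ gives the identity on all of $\{(x,y):x\neq y\}$. The identity $\cG(x,y)=-\Pi^*(y,x)$ is obtained by the symmetric argument, now applying property $(c)$ of $(G^*,\Pi^*)$ to $(\cG_\varepsilon(\cdot,y),\cP_\varepsilon(\cdot,y))$ from Section \ref{S_Ap}.

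The main obstacle is the limit passage in the third paragraph, which hinges on the uniform estimates from Section \ref{S_Ap} together with the $\overline{\Omega}\setminus\{y\}$ continuity of $\Pi$ (resp.\ $\Pi^*$) supplied by Theorem \ref{M3} (resp.\ Theorem \ref{180729@thm1}). Since all these ingredients are already in place, no further technical work is required and the proof reduces to organizing the pieces.
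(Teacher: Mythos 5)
Your proposal is correct and follows essentially the same route as the paper: the representation formula from Definition \ref{D0}$(c)$ and Definition \ref{D2}$(c)$, the matrix identity decomposed into the four blocks, and the cross identity $\Pi(x,y)=-\cG^*(y,x)$ obtained by testing Definition \ref{D2}$(c)$ for $(G,\Pi)$ against the approximated Green function $(\cG^*_\sigma(\cdot,x),\cP^*_\sigma(\cdot,x))$ and passing to the limit $\sigma\to 0$ via the analog of Lemma \ref{180223@lem1} and the continuity of $\Pi(\cdot,y)$. Your treatment is slightly more explicit than the paper's on two points — the a.e.-in-$y$ bookkeeping via intersecting full-measure sets, and the symmetric derivation of $\cG(x,y)=-\Pi^*(y,x)$, which the paper leaves implicit — but these are expository differences, not a different argument.
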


\subsection{Proof of Theorem \ref{M3}}		\label{S7-2}

We first prove the existence of the Green function for the flow velocity.
In the case when $|\Omega|<\infty$, we shall follow the arguments in \cite{MR3693868}, where the authors proved the existence of the Green function in a bounded Lipschitz domain under the following assumption.
\begin{assumption}[\text{$(\bf{A1})$ in \cite[Section 2.1]{MR3693868}}]		\label{180328@A2}
There exists constants $\mu\in (0,1]$ and $A_1>0$ such that the following holds:
If $(u, p)\in W^{1,2}(B_R(x_0))^d\times L^2(B_R(x_0))$ satisfies either
$$
\left\{
\begin{aligned}
\cL u+\nabla p=0 \quad \text{in }\, B_R(x_0),\\
\operatorname{div} u=0 \quad \text{in }\, B_R(x_0),
\end{aligned}
\right.
\qquad \text{or}\qquad
\left\{
\begin{aligned}
\cL^* u+\nabla p=0 \quad \text{in }\, B_R(x_0),\\
\operatorname{div} u=0 \quad \text{in }\, B_R(x_0),
\end{aligned}
\right.
$$
where $x_0\in \Omega$ and $R\in (0, d_{x_0}]$, then we have
\begin{equation}		\label{180328@A3}
\sup_{x,y\in B_{R/2}(x_0)}\frac{|u(x)-u(y)|}{|x-y|^\mu}\le A_1 R^{-\mu}\bigg(\dashint_{B_R(x_0)}|u|^2\,dx\bigg)^{1/2}.
\end{equation}
\end{assumption}

Note that because of $|\Omega|<\infty$, we have
$$
\sup_{x\in \Omega}d_x \le M(|\Omega|)<\infty.
$$
Hence, under the hypothesis of Theorem \ref{M3},
we can show that Assumption \ref{180328@A2} holds with $\mu=1$ and $A_1=A_1(d,\lambda, \omega_A, M)$.
Indeed, since $(u, p-(p)_{B_{3R/4}(x_0)})$ satisfies the same system, by \eqref{180110@eq8} with a covering argument and H\"older's inequality, we have
\begin{align*}
&\sup_{x,y\in B_{R/2}(x_0)}\frac{|u(x)-u(y)|}{|x-y|} \le \|Du\|_{L^\infty(B_{R/2}(x_0))}\\
&\le CR^{-d/2}\big(\|Du\|_{L^2(B_{3R/4}(x_0))}+\|p-(p)_{B_{3R/4}(x_0)}\|_{L^2(B_{3R/4}(x_0))}\big),
\end{align*}
where $C=C(d,\lambda, \omega_A, M)$.
Thus we get \eqref{180328@A3} from the above inequality and  Caccioppoli's inequality (see, for instance, \cite[Lemma 3.3]{MR3693868}).
Moreover, it is easy to check that, under Assumptions \ref{A1} and \ref{180328@A2}, the proof of \cite[Theorem 2.3]{MR3693868} still works for the domain $\Omega$.
Therefore, by the existence result in \cite[Theorem 2.3]{MR3693868} of a Green function,
there exist Green functions $(G, \Pi)$ and $(G^*, \Pi^*)$ for the flow velocity of $\cL$ and $\cL^*$, respectively, satisfying the properties in Definition \ref{D2} and \eqref{180727@A2}.
Notice from Definition \ref{D2} $(b)$ that
\begin{equation}		\label{180329_eq1}
\left\{
\begin{aligned}
\cL G(\cdot,y)+\nabla \Pi(\cdot,y)=0 &\quad \text{in }\, \Omega_R(x),\\
\operatorname{div} G(\cdot,y)=0 &\quad \text{in }\, \Omega_R(x),\\
G(\cdot,y)=0 &\quad \text{on }\, \partial \Omega \cap B_R(x)
\end{aligned}
\right.
\end{equation}
for any $x\in \Omega$ and $R>0$ satisfying $y\notin B_R(x)$.
Thus by \eqref{180213@eq1}, we get \eqref{180730@eq1} and \eqref{180730@eq2}.
Similarly, the existence of the Green function in a domain $\Omega$ with $|\Omega|=\infty$ follows from
\cite[Theorem 10.4]{MR3670039}.

We now turn to the proof of  \eqref{180328@C2}.
Let $x,y\in \Omega$ with $0<|x-y|\le 1$, and set $R=|x-y|/2$.
Suppose that  $(u, p)\in Y^{1,2}_0(\Omega)^d\times \tilde{L}^2(\Omega)$ is the weak solution of
$$
\left\{
\begin{aligned}
\cL^* u+\nabla p=f&\quad \text{in }\, \Omega,\\
\operatorname{div} u=0 &\quad \text{in }\, \Omega,
\end{aligned}
\right.
$$
where $f\in C^\infty_0(\Omega\setminus \overline{B_R(y)})^d$.
Since $f\equiv 0$ in $\Omega_R(y)$, by \eqref{180304@eq1} and
Lemma \ref{180108@lem1},  we have
\begin{align*}
\|u\|_{L^\infty (\Omega_{R/2}(y))}&\le CR^{-d}\|u\|_{L^1(\Omega_R(y))}+CR^{1-d}\big(\|Du\|_{L^1(\Omega_R(y))}+\|p\|_{L^1(\Omega_R(y))}\big)\\
&\le CR^{1-d/2}\big(\|u\|_{L^{2d/(d-2)}(\Omega)}+\|Du\|_{L^2(\Omega)}+\|p\|_{L^2(\Omega)}\big)\\
&\le CR^{1-d/2} \|f\|_{L^{2d/(d+2)}(\Omega\setminus \overline{B_R(y)})},
\end{align*}
where $C=C(d,\lambda, \omega_A,  K_0, R_0, \varrho_0)$.
Since $u$ is continuous at $y$, we get from \eqref{180222@A1} and the above inequality that
$$
\bigg|\int_{\Omega\setminus \overline{B_R(y)}} G(x,y)^\top f(x)\,dx\bigg|\le CR^{1-d/2}\|f\|_{L^{2d/(d+2)}(\Omega\setminus \overline{B_R(y)})}.
$$
Thus by the duality, we have
\begin{equation}		\label{180222@C2}
\|G(\cdot,y)\|_{L^{2d/(d-2)}(\Omega\setminus \overline{B_R(y)})}\le CR^{1-d/2}.
\end{equation}
Similarly, we obtain
\begin{equation}		\label{180222@C3}
\|DG(\cdot,y)\|_{L^{2}(\Omega\setminus \overline{B_R(y)})}+\|\Pi(\cdot,y)\|_{L^{2}(\Omega\setminus \overline{B_R(y)})}\le CR^{1-d/2}.
\end{equation}
From \eqref{180222@C2}, \eqref{180222@C3}, and  \eqref{180304@eq1} applied to \eqref{180329_eq1}, we get
\begin{equation}		\label{180328@C3}
\begin{aligned}
&R^{-1}\|G(\cdot,y)\|_{L^\infty(\Omega_{R/2}(x))}+\|DG(\cdot,y)\|_{L^\infty(\Omega_{R/2}(x))}\\
&\quad +\|\Pi(\cdot,y)\|_{L^\infty(\Omega_{R/2}(x))}\le CR^{1-d}.
\end{aligned}
\end{equation}
This gives \eqref{180328@C2} and
\begin{equation}		\label{180328@C4}
|G(x,y)|\le C|x-y|^{2-d}.
\end{equation}

To prove \eqref{180328@C1}, we use the idea in the proof of \cite[Theorem 3.13]{MR2718661}, where the authors obtained pointwise bounds for Green functions of elliptic systems near the boundary.
We claim that for any $x,y\in \Omega$ with $0<|x-y|\le 1$, we have
\begin{equation}		\label{180329@A1}
|G(x,y)|\le C\min\{d_x,|x-y|\}\cdot |x-y|^{1-d},
\end{equation}
\begin{equation}		\label{180329@A2}
|G(x,y)|\le C\min\{d_y,|x-y|\}\cdot |x-y|^{1-d}.
\end{equation}
We denote $R=|x-y|/2$ and extend $G(\cdot,y)$ by zero on $\bR^d\setminus \Omega$.
Assume $d_x<R/2$, and take $x_0\in \partial \Omega$ such that $\operatorname{dist}(x,x_0)=d_x$.
Since $G(x_0,y)=0$, we obtain by \eqref{180328@C3} that
\begin{equation}		\label{180329@A5}
|G(x,y)|=|G(x,y)-G(x_0,y)|\le d_x \|DG(\cdot,y)\|_{L^\infty(\Omega_{R/2}(x))} \le Cd_x |x-y|^{1-d}.
\end{equation}
From \eqref{180328@C4} and \eqref{180329@A5}, we get
$$
|G(x,y)|\le  Cd_x R^{1-d}\le C\min\{d_x,|x-y|/4\}\cdot |x-y|^{1-d},
$$
which gives \eqref{180329@A1}.
By the same reasoning, we have
$$
|G^*(y,x)|\le C\min\{d_y, |x-y|\}\cdot |x-y|^{1-d}.
$$
This together with \eqref{180727@A2} yields \eqref{180329@A2}.

We are ready to prove \eqref{180328@C1}.
We again let $x,y\in \Omega$ with $x\neq y$, and set $R=|x-y|/2$.
Assume $d_x<R/16$, and we take $x_0\in \partial \Omega$ such that $\operatorname{dist}(x,x_0)=d_x$.
Note that
$$
\Omega_{R/8}(x)\subset \Omega_{R/4}(x_0)\subset \Omega_{R/2}(x_0)\subset \Omega_R(x).
$$
Since $(G(\cdot,y), \Pi(\cdot,y))$ satisfies \eqref{180329_eq1},  we have the following boundary Caccioppoli inequality
$$
\|DG(\cdot,y)\|_{L^2(\Omega_{R/4}(x_0))}+\|\Pi(\cdot,y)\|_{L^2(\Omega_{R/4}(x_0))} \le CR^{-1}\|G(\cdot,y)\|_{L^2(\Omega_{R/2}(x_0))}.
$$
Using this together with \eqref{180304@eq1} and H\"older's inequality, we have
\begin{align}
\nonumber
&|G(x,y)|=|G(x,y)-G(x_0,y)|\le d_x \|DG(\cdot,y)\|_{L^\infty(\Omega_{R/16}(x))}\\
\nonumber
&\le Cd_x R^{-d/2-1}\|G(\cdot,y)\|_{L^2(\Omega_{R/4}(x_0))}\\
\nonumber
&\quad +Cd_xR^{-d/2}\big(\|DG(\cdot,y)\|_{L^2(\Omega_{R/4}(x_0))}+\|\Pi(\cdot,y)\|_{L^2(\Omega_{R/4}(x_0))}\big)\\
\nonumber
&\le Cd_xR^{-d/2-1}\|G(\cdot,y)\|_{L^2(\Omega_{R/2}(x_0))}\\
\label{180329@A6}
&\le Cd_xR^{-1}\|G(\cdot,y)\|_{L^\infty(\Omega_{R}(x))}.
\end{align}
Note that for any $z\in \Omega_{R}(x)$, we have $R<|z-y|<3R$.
Thus, it follows from \eqref{180329@A2} that
$$
\begin{aligned}
|G(z,y)| &\le C\min\{d_y,|z-y|\}\cdot |z-y|^{1-d}\\
&\le C\min\{d_y,|x-y|\}\cdot |x-y|^{1-d}.
\end{aligned}
$$
This together with \eqref{180329@A6} yields
$$
|G(x,y)|\le Cd_x \min\{d_y,|x-y|\}\cdot |x-y|^{-d}.
$$
Finally, combining \eqref{180329@A2} and the above inequality, we get the desired estimate \eqref{180328@C1}.
The theorem is proved.
\qed

\subsection{Proof of Corollary \ref{180727@cor1}}			\label{S7-3}
The representation formula \eqref{180729@eq5} follows immediately from Definition \ref{D0} $(c)$ and  Definition \ref{D2} $(c)$.
To verify \eqref{180727@A1},
let $x,y\in \Omega$ with $x\neq y$ and  $(\cG_\sigma^*(\cdot,x), \cP_\sigma^*(\cdot,x))\in Y^{1,2}_0(\Omega)^d\times \tilde{L}^2(\Omega)$ be the approximated Green function for the pressure of $\cL^*$ satisfying \eqref{180222@E1}.
Then by Definition \ref{D2} $(c)$ and the continuity of $\cG_\sigma^*(\cdot,x)$, we have
\begin{equation}		\label{180223@eq2}
\cG^*_\sigma(y,x)=-\int_{\Omega} \Pi(z,y) \Phi_{\sigma,x}(z)\,dz.
\end{equation}
Due to the continuity of $\Pi(\cdot,y)$, the right-hand side of \eqref{180223@eq2} converges to $-\Pi(x,y)$ as $\sigma \to 0$.
On the other hand, by the counterpart of Lemma \ref{180223@lem1}, there exists a subsequence of $\{\cG_\sigma^*(y,x)\}$ that converges to $\cG^*(y,x)$.
Therefore, we conclude that
$$
\cG^*(y,x)=-\Pi(x,y).
$$
From this together with \eqref{180220@eq3} and \eqref{180727@A2} (resp. \eqref{180729@eq2}), we conclude that \eqref{180727@A1} holds when $d\ge 3$ (resp. $d=2$).
\qed

\appendix

\section{$L^\infty$-estimates}	\label{S8}
In this section, we prove $L^\infty$-estimates of solutions and its derivatives, which are crucial for proving our main results. Denote $B_r=B_r(0)$ for any $r>0$.
The following lemma concerns interior estimates, the proof of which is based on the $W^{1,\infty}$-regularity result in \cite{MR3912724}.

\begin{lemma}		\label{180108@lem3}
Let $R\in (0,1]$.
Suppose that the coefficients $A^{\alpha\beta}$ of $\cL$ are of partially Dini mean oscillation with respect to $x'$ in the interior of $B_R$ satisfying Definition \ref{D1} $(a)$ with a Dini function $\omega=\omega_A$.
If $(u,p)\in W^{1,2}(B_R)^d\times L^2(B_R)$ satisfies
$$
\left\{
\begin{aligned}
\cL u+\nabla p=f &\quad \text{in }\, B_R,\\
\operatorname{div} u=\ell &\quad \text{in }\, B_R,
\end{aligned}
\right.
$$
where $f\in L^\infty(B_R)^d$ and $\ell\in \bR$, then we have
\begin{equation}		\label{180406@eq2}
(u,p)\in W^{1,\infty}(B_{R/2})^d\times L^\infty(B_{R/2})
\end{equation}
with the estimates
\begin{equation}		\label{180110@eq8}
\begin{aligned}
&\|Du\|_{L^\infty(B_{R/2})}+\|p\|_{L^\infty(B_{R/2})}\\
&\le C\big( R^{-d}\big(\|Du\|_{L^1(B_R)}+\|p\|_{L^1(B_R)}\big)+R\|f\|_{L^\infty(B_R)}+|\ell|\big),
\end{aligned}
\end{equation}
and
\begin{equation}		\label{180110@eq8a}
\begin{aligned}
&\|u\|_{L^\infty(B_{R/2})} \le CR^{-d}\|u\|_{L^1(B_{R/2})}\\
&\quad +C\big(R^{1-d}\big(\|Du\|_{L^1(B_R)}+\|p\|_{L^1(B_R)}\big)+R^2\|f\|_{L^\infty(B_R)}+R|\ell|\big),
\end{aligned}
\end{equation}
where $C=C(d,\lambda, \omega_A)$.
If we further assume that $A^{\alpha\beta}$ are of Dini mean oscillation with respect to all direction in $B_R$ satisfying Definition \ref{D1} $(b)$, then we have
\begin{equation}		\label{180406@eq1}
(u, p)\in C^1(\overline{B_{R/2}})^d\times C(\overline{B_{R/2}}).
\end{equation}
\end{lemma}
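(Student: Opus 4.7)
The plan is to deduce both parts of the lemma from the interior $W^{1,\infty}$- and $C^1$-regularity theorems established in \cite{MR3912724} and \cite{MR3890946} for Stokes systems with (partially) Dini mean oscillation coefficients. First I would rescale to the unit ball. Setting $\tilde u(x) = u(Rx)/R$, $\tilde p(x) = p(Rx)$, $\tilde f(x) = R f(Rx)$, and $\tilde A^{\alpha\beta}(x) = A^{\alpha\beta}(Rx)$, the rescaled pair solves the analogous Stokes system on $B_1$ with divergence equal to the same constant $\ell$ and with $L^\infty$ source $\tilde f$. Because $R \le 1$, the partially Dini mean oscillation modulus of $\tilde A^{\alpha\beta}$ is dominated by $\omega_A$, so any constant depending on $\omega_A$ transfers through the rescaling.

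Next I would invoke the interior $W^{1,\infty}$-estimate of \cite{MR3912724} on a sequence of nested balls. That theorem yields an inequality of the form
\[
\|D\tilde u\|_{L^\infty(B_{1/2})} + \|\tilde p - (\tilde p)_{B_{3/4}}\|_{L^\infty(B_{1/2})} \le C\bigl(\|D\tilde u\|_{L^q(B_{3/4})} + \|\tilde p - (\tilde p)_{B_{3/4}}\|_{L^q(B_{3/4})} + \|\tilde f\|_{L^\infty(B_{3/4})} + |\ell|\bigr)
\]
for a fixed exponent $q \ge 1$; the datum $\ell$ is trivially of Dini mean oscillation since it is constant, and $\tilde f \in L^\infty$ feeds into the estimate as the $L^\infty$ source term. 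A standard local boundedness iteration, exploiting the scale-invariance of the equation and Young's inequality to absorb the $L^\infty$ left-hand side at small scales, lowers the exponent on the right-hand side to $q = 1$. Adding and subtracting the constant $(\tilde p)_{B_{3/4}}$, estimated by $\|\tilde p\|_{L^1(B_{3/4})}$, then controls $\|\tilde p\|_{L^\infty(B_{1/2})}$ itself, and undoing the rescaling delivers \eqref{180110@eq8} together with \eqref{180406@eq2}.

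For \eqref{180110@eq8a} I would use the Lipschitz control just obtained: since $u$ is Lipschitz on $B_{R/2}$ with Lipschitz constant $\|Du\|_{L^\infty(B_{R/2})}$, the oscillation around the mean satisfies $|u(x) - (u)_{B_{R/2}}| \le CR \|Du\|_{L^\infty(B_{R/2})}$, and combining this with $|(u)_{B_{R/2}}| \le C R^{-d}\|u\|_{L^1(B_{R/2})}$ and \eqref{180110@eq8} yields \eqref{180110@eq8a}. Finally, the improved regularity \eqref{180406@eq1} under Dini mean oscillation in all directions is a direct application of the interior $C^1$-regularity theorem of \cite{MR3890946} to the rescaled system, combined with the already-established $L^\infty$ bound on $(Du, p)$.

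The analytical heart of the argument is entirely contained in the two cited theorems; the main technical care is the $L^q$-to-$L^1$ improvement via local boundedness iteration and the bookkeeping needed to handle the pressure's arbitrary additive constant throughout the nested-ball argument. Both are standard, but must be implemented in a scale-invariant manner so that rescaling introduces no residual powers of $R$ beyond the ones explicitly appearing in \eqref{180110@eq8} and \eqref{180110@eq8a}.
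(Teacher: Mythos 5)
Your overall architecture---rescale, invoke the interior regularity theorem of \cite{MR3912724}, then reassemble the estimate---matches the paper's, and your derivation of \eqref{180110@eq8a} from \eqref{180110@eq8} via the Poincar\'e-type bound is exactly what is done. However, there is a genuine gap in the treatment of the source term $f$.

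You assert that the theorem of \cite{MR3912724} ``yields an inequality of the form
\[
\|D\tilde u\|_{L^\infty(B_{1/2})} + \|\tilde p - (\tilde p)_{B_{3/4}}\|_{L^\infty(B_{1/2})} \le C\bigl(\cdots + \|\tilde f\|_{L^\infty(B_{3/4})} + |\ell|\bigr),
\]
$\tilde f \in L^\infty$ feeds into the estimate as the $L^\infty$ source term.'' This is not what that theorem provides: the $W^{1,\infty}$-estimate of \cite{MR3912724} is formulated for Stokes systems with the right-hand side in \emph{divergence form}, $\cL u + \nabla p = D_\alpha f_\alpha$, and the estimate on $(Du,p)$ carries a Dini-type integral $\int_0^1 \tilde\omega_{f_\alpha}(t)/t\,dt$ of the mean oscillation of the $f_\alpha$, not an $L^\infty$ norm of a non-divergence source. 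To apply it to $\cL u + \nabla p = f$ one must first solve a divergence equation: by \cite[Lemma 3.1]{MR3809039} one finds $f_\alpha$ with $D_\alpha f_\alpha = f$ in $B_R$ and $\|Df_\alpha\|_{L^q(B_R)}\le C\|f\|_{L^q(B_R)}$ for some fixed $q>d$; Morrey embedding then gives $\|f_\alpha\|_{L^\infty(B_R)}+R^{1-d/q}[f_\alpha]_{C^{1-d/q}(B_R)}\le CR\|f\|_{L^\infty(B_R)}$, after which one must check that the resulting Dini integral of $f_\alpha$ is bounded by $CR^2\|f\|_{L^\infty(B_R)}$. This conversion, together with the accompanying verification that the rescaled coefficient modulus $\tilde\omega_{\hat A^{\alpha\beta},y'}$ is majorized by a Dini function $\omega_0$ depending only on $\omega_A$ (a nontrivial telescoping computation), is the actual content of the proof and is entirely absent from your proposal.

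Two smaller points. First, the estimate in \cite{MR3912724} that the paper cites already has $L^1$ norms of $Du$ and $p$ on the right-hand side, so the ``$L^q$-to-$L^1$ improvement via local boundedness iteration'' you propose is not needed; introducing it is harmless but distracts from the real technical issue. Second, your remark that $\ell$ ``is trivially of Dini mean oscillation since it is constant'' is true but beside the point --- $\ell$ enters through the divergence constraint, not as a mean-oscillation datum, and the cited estimate handles it directly.
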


\begin{proof}
Note that \eqref{180406@eq2} and \eqref{180406@eq1} are easy consequences of \cite[Theorems 2.2 and 2.3, and Remark 2.4]{MR3912724} together with covering and scaling arguments.
Moreover, the estimate \eqref{180110@eq8a} follows from \eqref{180110@eq8}.
Indeed, using the following Poincar\'e inequality
\begin{equation}		\label{180213@eq2a}
\|u-(u)_{B_{R/2}}\|_{L^\infty(B_{R/2})}\le C R\|Du\|_{L^\infty(B_{R/2})},
\end{equation}
we have
$$
\|u\|_{L^\infty(B_{R/2})}\le C\big( R^{-d}\|u\|_{L^1(B_{R/2})}+R\|Du\|_{L^\infty(B_{R/2})}\big),
$$
where $C=C(d)$.
This inequality together with \eqref{180110@eq8} implies \eqref{180110@eq8a}.
Thus, to complete the proof of the lemma, we only need to prove \eqref{180110@eq8}.
By a covering argument, it suffices to show that
\begin{equation}		\label{180108@eq1}
\begin{aligned}
&\|Du\|_{L^\infty(B_{R/3})}+\|p\|_{L^\infty(B_{R/3})}\\
&\le C\big(R^{-d}\big(\|Du\|_{L^1(B_R)}+\|p\|_{L^1(B_R)}\big)+R\|f\|_{L^\infty(B_R)}+|\ell|\big),
\end{aligned}
\end{equation}
where $C=C(d,\lambda, \omega_A)$.
Let us fix $q>d$.
By \cite[Lemma 3.1]{MR3809039}, there exist functions
$f_\alpha\in \tilde{W}^{1,q}(B_R)^d$, $\alpha\in\{1,\ldots,d\}$, such that
$$
D_\alpha f_\alpha=f \quad \text{in }\, B_R, \quad \|Df_\alpha\|_{L^q(B_R)}\le C(d,q)\|f\|_{L^q(B_R)},
$$
which together with the Morrey inequality implies
\begin{equation}		\label{180108@eq2a}
\|f_\alpha\|_{L^\infty(B_R)}+R^{1-d/q}[f_\alpha]_{C^{1-d/q}(B_R)}\le C(d,q)  R\|f\|_{L^\infty(B_R)}.
\end{equation}
Note that $(u, p)$ satisfies
\begin{equation}		\label{180228@eq2}
\left\{
\begin{aligned}
\cL u+\nabla p=D_\alpha f_\alpha &\quad \text{in }\, B_R,\\
\operatorname{div} u=\ell &\quad \text{in }\, B_R.
\end{aligned}
\right.
\end{equation}
We now apply \cite[Theorem 2.2 $(a)$]{MR3912724} with the $L^\infty$-estimate \cite[Eq. (4.16)]{MR3912724} to the scaled system of \eqref{180228@eq2}.
To this end, let
$$
\hat{u}(y)=u(x), \quad \hat{p}(y)=p(x), \quad \hat{A}^{\alpha\beta}(y)=A^{\alpha\beta}(x), \quad \hat{f}_\alpha(y)=f_\alpha (x),
$$
where $y=6x/R$, and observe that
\begin{equation}		\label{180109@B1}
\left\{
\begin{aligned}
D_\alpha(\hat{A}^{\alpha\beta}D_\beta \hat{u})+\nabla (R\hat{p}/6)=D_\alpha (R\hat{f}_\alpha/6) &\quad \text{in }\, B_6,\\
\operatorname{div} \hat{u}=R\ell/6 &\quad \text{in }\, B_6.
\end{aligned}
\right.
\end{equation}
Fix a constant $\gamma$ satisfying $1-d/q<\gamma<1$, and let $\kappa=\kappa(d,\lambda,\gamma)\in (0,1/2]$ be the constant from \cite[Lemma 4.1]{MR3912724}.
For $r\in (0,1]$ and $h\in L^1(B_6)$, we denote
$$
\begin{aligned}
&\omega_{h,y'}(r)=\sup_{y_0\in B_4} \dashint_{B_r(y_0)}\bigg|h(y)-\dashint_{B_r'(y_0')}h(y_1,z')\,dz'\bigg|\,dy, \\
&\tilde{\omega}_{h,y'}(r)=\sum_{i=1}^\infty \kappa^{\gamma i}\big(\omega_{h,y'}(\kappa^{-i}r)[\kappa^{-i}r<1]+\omega_{h,y'}(1)[\kappa^{-i}r\ge 1]\big),
\end{aligned}
$$
where we used Iverson bracket notation, that is, $[P]=1$ if $P$ is true and $[P]=0$ otherwise.
By \eqref{180108@eq2a} and the fact that $1-d/q<\gamma<1$, we have
$$
R\|\hat{f}_1\|_{L^\infty(B_6)} + \int_0^1 \frac{\tilde{\omega}_{R\hat{f}_\alpha,x'}(t)}{t}\,dt\le C R^2\|f\|_{L^\infty(B_R)},
$$
where $C=C(d,q,\gamma)$.
Using this inequality and \cite[Eq. (4.16) and Remark 4.2]{MR3912724} applied to \eqref{180109@B1}, we find that
$$
\begin{aligned}
&\|D\hat{u}\|_{L^\infty(B_2)}+R\|\hat{p}\|_{L^\infty(B_2)} \\
&\le C\bigg( \|D\hat{u}\|_{L^1(B_6)}+R\|\hat{p}\|_{L^1(B_6)}+R\|\hat{f}_1\|_{L^\infty(B_6)}+R|\ell|+\int_0^1 \frac{\tilde{\omega}_{R\hat{f}_\alpha, x'}(t)}{t}\,dt\bigg),\\
&\le C\big(\|D\hat{u}\|_{L^1(B_6)}+R\|\hat{p}\|_{L^1(B_6)}+R^2\|f\|_{L^\infty(B_R)}+R|\ell|\big),
\end{aligned}
$$
where $C=C(d,\lambda,\omega_0,q,\gamma)$.
Here, $\omega_0:(0,1]\to [0,\infty)$ is a function such that
$$
\int_0^r \frac{\tilde{\omega}_{\hat{A}^{\alpha\beta},y'}(t)}{t}\,dt\le \int_0^r \frac{\omega_0(t)}{t}\,dt <\infty \quad \text{for all }\, r\in (0,1].
$$
Therefore, from the change of variables, we get
$$
\begin{aligned}
&\|Du\|_{L^\infty(B_{R/3})}+\| p \|_{L^\infty(B_{R/3})} \\
&\le C\big( R^{-d}\big(\|Du\|_{L^1(B_R)}+\|p\|_{L^1(B_R)}\big)+R\|f\|_{L^\infty(B_R)}+|\ell|\big),
\end{aligned}
$$
where $C=C(d,\lambda,\omega_0,q,\gamma)=C(d,\lambda, \omega_0)$.

To complete the proof of \eqref{180108@eq1}, it remains to show that $\omega_0$ can be derived from $\omega_A$.
Set
$$
\tilde{\omega}_A(r)=\sum_{i=1}^\infty \kappa^{\gamma i}\big(\omega_A(\kappa^{-i}r)[\kappa^{-i}r<1]+\omega_A(1)[\kappa^{-i}r\ge 1]\big),
$$
and observe that $\tilde{\omega}_A:[0,1]\to [0,\infty)$ is a Dini function; see \cite[Lemma 1]{MR2927619}.
Using the fact that
$$
{\omega}_{\hat{A}^{\alpha\beta},y'}(r)\le \omega_A (rR/6) \quad \text{for all }\, r\in (0,1],
$$
we have
\begin{align}
\nonumber
\tilde{\omega}_{\hat{A}^{\alpha\beta},y'}(r)
&\le \sum_{i=1}^\infty \kappa^{\gamma i} \omega_A(\kappa^{-i}rR/6)[\kappa^{-i}r<1]+\sum_{i=1}^\infty \kappa^{\gamma i} \omega_{A}(R/6)[\kappa^{-i}r\ge 1]\\
\label{180321@eq2}
&=:I_1(r)+I_2(r).
\end{align}
Obviously, $I_1(r)\le \tilde{\omega}_A (rR/6)$.
Since $\omega_A$ is a Dini function, we obtain that (using $R\le 1$)
$$
\omega_A(R/6)\le C \inf_{R/6\le t\le R/3}\omega_A(t)\le C\int_{R/6}^{R/3}\frac{\omega_A(t)}{t}\,dt\le C\int_0^1 \frac{\omega_A(t)}{t}\,dt,
$$
where $C=C(\omega_A)$.
This implies that
$$
I_2(r)\le C \sum_{\kappa^{-i}r\ge 1} \kappa^{\gamma i}\cdot \int_0^1 \frac{\omega_A(t)}{t}\,dt\le C_0 r^{\gamma} \int_0^1 \frac{\omega_A(t)}{t}\,dt,
$$
where $C_0=C_0(\omega_A, \kappa,\gamma)=C_0(d,\lambda, \omega_A)$.
From \eqref{180321@eq2} and the estimates of $I_1$ and $I_2$,  we get
\begin{equation}		\label{180228@eq6}
\tilde{\omega}_{\hat{A}^{\alpha\beta},y'}(r)\le \tilde{\omega}_A(rR/6)+C_0 r^{\gamma} \int_0^1 \frac{\omega_A(t)}{t}\,dt.
\end{equation}
Therefore, for any $r\in (0,1]$, we have
\begin{align*}
\int_0^r \frac{\tilde{\omega}_{\hat{A}^{\alpha\beta},y'}(t)}{t}\,dt
&\le \int_0^r \frac{\tilde{\omega}_A(tR/6)}{t}\,dt+C_0\int_0^r \frac{t^{\gamma}}{t}\,dt\cdot \int_0^1 \frac{\omega_A(t)}{t}\,dt\\
&\le \int_0^r \frac{\omega_0(t)}{t}\,dt<\infty,
\end{align*}
where we set
$$
\omega_0(r)=\tilde{\omega}_A(r)+C_0 r^{\gamma}\int_0^1 \frac{\omega_A(t)}{t}\,dt.
$$
This completes the proof of \eqref{180108@eq1}.
The lemma is proved.
\end{proof}

Based on the $C^1$-regularity result in \cite{MR3890946}, we obtain the following $L^\infty$-estimate  in a domain having a $C^{1,\rm{Dini}}$ boundary.

\begin{lemma}		\label{180208@lem1}
Let $\Omega$ be a (possibly unbounded) domain in $\bR^d$ having a $C^{1,\rm{Dini}}$ boundary as in Definition \ref{D6}.
Suppose that the coefficients $A^{\alpha\beta}$ of $\cL$ are of Dini mean oscillation in $\Omega$ satisfying Definition \ref{D1} $(b)$ with a Dini function $\omega=\omega_A$.
Let $x_0\in \Omega$ and $R\in (0,1]$.
If $(u, p)\in W^{1,2}(\Omega_R(x_0))^d\times L^2(\Omega_R(x_0))$ satisfies
$$
\left\{
\begin{aligned}
\cL u+\nabla p=f &\quad \text{in }\, \Omega_R(x_0),\\
\operatorname{div} u=\ell &\quad \text{in }\, \Omega_R(x_0),\\
u=0 &\quad \text{on }\, \partial \Omega \cap B_R(x_0),
\end{aligned}
\right.
$$
where $f\in L^\infty(\Omega_R(x_0))^d$ and $\ell\in \bR$, then we have
\begin{equation}		\label{180213@eq1}
(u,p)\in C^1(\overline{\Omega_{R/2}(x_0)})^d\times C(\overline{\Omega_{R/2}(x_0)})
\end{equation}
with the estimate
\begin{equation}		\label{180304@eq1}
\begin{aligned}
&R^{-1}\|u\|_{L^\infty(\Omega_{R/2}(x_0))}+\|Du\|_{L^\infty(\Omega_{R/2}(x_0))}+\|p\|_{L^\infty(\Omega_{R/2}(x_0))}\\
&\le CR^{-d-1}\|u\|_{L^1(\Omega_R(x_0))}+ CR^{-d}\big(\|Du\|_{L^1(\Omega_R(x_0))}+\|p\|_{L^1(\Omega_R(x_0))}\big)\\
&\quad +CR\|f\|_{L^\infty(\Omega_R(x_0))}+C|\ell|,
\end{aligned}
\end{equation}
where $C=C(d,\lambda,\omega_A,R_0,\varrho_0)$.
\end{lemma}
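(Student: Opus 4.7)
\textbf{Proof proposal for Lemma \ref{180208@lem1}.} The plan is to mimic the structure of the proof of Lemma \ref{180108@lem3}, replacing the interior $W^{1,\infty}$-regularity theorem of \cite{MR3912724} by its boundary $C^1$ counterpart from \cite{MR3890946}. The $C^1$-regularity conclusion \eqref{180213@eq1} will be a direct qualitative consequence of that theorem, combined with a covering of $\overline{\Omega_{R/2}(x_0)}$ by a finite number of interior balls (handled by Lemma \ref{180108@lem3}) and boundary half-balls (handled by the boundary theorem). So the work lies in establishing the quantitative bound \eqref{180304@eq1}.

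First I would reduce to a single scaled model problem. By a standard covering/chaining argument, it suffices to bound $\|Du\|_{L^\infty} + \|p\|_{L^\infty}$ on, say, $\Omega_{R/3}(x_0)$. For points of $\Omega_{R/3}(x_0)$ whose distance to $\partial\Omega$ is comparable to or larger than $R$, the interior estimate \eqref{180110@eq8} of Lemma \ref{180108@lem3} already yields the desired bound. So I may assume $x_0 \in \partial\Omega$. Using the $C^{1,\text{Dini}}$ chart provided by Definition \ref{D6}, I flatten the boundary in $B_{R_0}(x_0)$: let $y = 6\Psi(x)/R$ where $\Psi$ is the $C^{1,\text{Dini}}$ flattening of $\partial\Omega$ near $x_0$. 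After this change of variable the system becomes a Stokes system on $B_6^+ := B_6 \cap \{y_1 > 0\}$ with transformed coefficients $\hat A^{\alpha\beta}$, pressure $R\hat p/6$, and data $R \hat f_\alpha/6$, $R\ell/6$, together with the homogeneous Dirichlet condition on $\{y_1 = 0\}\cap B_6$. One checks that $\hat A^{\alpha\beta}$ remains of Dini mean oscillation on $B_6^+$ with a new Dini modulus controlled by $\omega_A$ and $\varrho_0$; this is where the $\varrho_0$-dependence of the final constant enters.

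Second, I handle the right-hand side. Apply the divergence-equation lemma \cite[Lemma 3.1]{MR3809039} on $B_6^+$ (or on a ball containing it) to write $f = D_\alpha f_\alpha$ with $f_\alpha \in \tilde W^{1,q}$ for a fixed $q > d$, satisfying the Hölder bound \eqref{180108@eq2a}, exactly as in the proof of Lemma \ref{180108@lem3}. Choose $\gamma \in (1-d/q,1)$ and let $\kappa \in (0,1/2]$ be the constant from the boundary analog of \cite[Lemma 4.1]{MR3912724} that appears in \cite{MR3890946}. With the quantities $\omega_{h,y'}$ and $\tilde\omega_{h,y'}$ from Lemma \ref{180108@lem3}, \cite[Lemma 3.1]{MR3809039} plus the Morrey embedding give
\[
\int_0^1 \frac{\tilde\omega_{R\hat f_\alpha,y'}(t)}{t}\,dt \;\le\; C R^2\|f\|_{L^\infty(\Omega_R(x_0))}.
\]
The boundary $C^1$-estimate of \cite{MR3890946}, applied to the flattened system on $B_6^+$ with zero Dirichlet condition on the flat part, then yields
\[
\|\hat u\|_{L^\infty(B_2^+)} + \|D\hat u\|_{L^\infty(B_2^+)} + R\|\hat p\|_{L^\infty(B_2^+)}
\le C\Bigl(\|\hat u\|_{L^1(B_6^+)} + \|D\hat u\|_{L^1(B_6^+)} + R\|\hat p\|_{L^1(B_6^+)} + R^2 \|f\|_{L^\infty} + R|\ell|\Bigr),
\]
with $C$ depending on $d$, $\lambda$, and the Dini modulus $\omega_0$ constructed from $\omega_A$ via the estimate \eqref{180228@eq6} (now modified to include $\varrho_0$). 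Undoing the scaling and the boundary flattening gives \eqref{180304@eq1}, with the $R^{-1}\|u\|_{L^\infty}$ term on the left produced either by the boundary Poincaré inequality (since $u$ vanishes on $\partial\Omega \cap B_R(x_0)$) or directly from the $L^\infty$-bound of $\hat u$ in the rescaled problem.

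The main obstacle I expect is the careful verification that (i) the flattened coefficients $\hat A^{\alpha\beta}$ still satisfy a Dini mean oscillation condition whose modulus is controlled quantitatively by $\omega_A$ and $\varrho_0$, and (ii) the constant in the boundary $C^1$-estimate from \cite{MR3890946} depends only on $d,\lambda,\omega_A,R_0,\varrho_0$ and not on the geometry in a hidden way, so that the chain of inequalities assembles into the clean form \eqref{180304@eq1}. All the remaining bookkeeping — the decomposition $f = D_\alpha f_\alpha$, the manipulation of $\tilde\omega$, and the covering argument — is a routine adaptation of Lemma \ref{180108@lem3}.
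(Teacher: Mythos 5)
Your proposal takes a genuinely different route from the paper, and the difference matters. The paper does \emph{not} flatten the boundary. It works directly in the $C^{1,\rm{Dini}}$ domain: it first establishes the qualitative regularity \eqref{180213@eq1} by cutting off, applying $W^{1,q}$-bootstrap via \cite[Corollary 5.3]{MR3693868}, and invoking \cite[Theorem 1.4]{MR3890946} for the resulting global Dirichlet problem. For the quantitative estimate \eqref{180304@eq1} it again multiplies by a cutoff $\eta$ supported in a small ball, forms the global pair $(v,\pi)=(\eta u,\,\eta p - (\eta p)_\Omega)$, and applies the local boundary estimate \cite[Eq.\ (2.27)]{MR3890946} for solutions of the Dirichlet problem in $C^{1,\rm{Dini}}$ domains. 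Because the cutoff produces the term $h_\alpha = A^{\alpha\beta}D_\beta\eta\,u$ on the right-hand side, whose Dini modulus is controlled only through the $C^{1-d/q}$-seminorm of $u$ (hence through $\|Du\|_{L^q}$ for a fixed $q>d$), the resulting local estimate \eqref{180627@eq1} carries $L^q$ norms of $Du,p$. The paper then performs an absorption iteration over radii $r_k = R(1-2^{-k})$, using Young's inequality, to reduce these $L^q$ norms to the $L^1$ norms appearing in \eqref{180304@eq1}. Your proposal omits this iteration entirely and presumes a clean $L^1$-to-$L^\infty$ boundary estimate, which is what the paper uses in the half-space setting of Lemma \ref{180227@lem2} (citing \cite[Eq.\ (7.6)]{MR3912724}), but not here.

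Beyond missing the iteration, the flattening step has two concrete gaps. First, a generic $C^1$ flattening map does not preserve the divergence structure of the Stokes system: if you set $\hat u(y)=u(\Psi^{-1}(y))$, then $\operatorname{div}_y \hat u \ne \ell$ in general, and you would either need a Piola-type transform for $u$ or have to track the extra lower-order terms that appear; you do not address this. Second, the half-space local estimate you would need after flattening lives in \cite{MR3912724} (for partially Dini coefficients in $\bR^d_+$), not in \cite{MR3890946}, which is stated for $C^{1,\rm{Dini}}$ domains directly; moreover, after flattening, the coefficients become products of $A^{\alpha\beta}$ with entries of the Dini-continuous Jacobian of the flattening map, and the claim that the resulting modulus of Dini mean oscillation depends only on $\omega_A,R_0,\varrho_0$ requires a separate argument (essentially a product lemma such as \cite[Lemma 2.1]{MR3747493}). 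In short, your plan is plausible in spirit but re-derives the content that \cite{MR3890946} already packages; the paper instead applies \cite{MR3890946} as a black box and pays for it with the cutoff-plus-iteration mechanism, which your proposal does not supply.
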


\begin{remark}		\label{180321@rmk1}
In the proof of Lemma \ref{180208@lem1} below, we will use the $W^{1,q}$-regularity result in \cite[Corollary 5.3]{MR3693868} (see also \cite{MR3758532}) for the Stokes system.
If $\Omega$ is bounded, then under the hypothesis of Lemma \ref{180208@lem1}, the regularity result is available.
For further details, see the proof of \cite[Theorem 1.6]{MR3890946}.
\end{remark}

\begin{proof}[Proof of Lemma \ref{180208@lem1}]
If $\Omega$ is unbounded, then one may construct a bounded domain $\Omega^*$ such that $\Omega_R(x_0)\subset \Omega^*\subset \Omega$ and it has the same nice properties as $\Omega$.
Thus we may assume that $\Omega$ is bounded.

Let $x_0\in \Omega$ and $R\in (0,1]$.
We first prove \eqref{180213@eq1}.
By using localization and bootstrap arguments combined with the $W^{1,q}$-regularity result in \cite[Corollary 5.3]{MR3693868}, we see that
\begin{equation}		\label{180214@eq1a}
(u, p)\in W^{1,q}(\Omega_r(x_0))^d\times L^q(\Omega_r(x_0)) \quad \text{ for all }\, r\in (0,R) \, \text{ and }\, q\in (1,\infty).
\end{equation}
Let $\eta$ be a smooth function on $\bR^d$ with a compact support in $B_R(x_0)$.
Then, the pair
\begin{equation}		\label{181008@eq5}
(v,\pi)=(\eta u, \eta p-(\eta p)_\Omega)\in W^{1,2}_0(\Omega)^d\times \tilde{L}^2(\Omega)
\end{equation}
satisfies
\begin{equation}		\label{180214@eq1}
\left\{
\begin{aligned}
\cL v+\nabla \pi=h+D_\alpha h_\alpha &\quad \text{in }\, \Omega,\\
\operatorname{div} v= g &\quad \text{in }\, \Omega,\\
v=0 &\quad \text{on }\, \partial \Omega,
\end{aligned}
\right.
\end{equation}
where we set
$$
h=A^{\alpha\beta} D_\beta u D_\alpha \eta +p \nabla \eta+\eta f, \quad h_\alpha=A^{\alpha\beta} D_\beta \eta u, \quad g=\nabla \eta \cdot u+\ell \eta.
$$
Obviously, \eqref{180214@eq1a} implies that $h\in L^q(\Omega)^d$ with $q>d$.
Moreover,
since $u$ is H\"older continuous in $\Omega\cap \operatorname{supp}\eta$, it follows from \cite[Lemma 2.1]{MR3747493} that $h_\alpha$ and $g$ are of Dini mean oscillation in $\Omega$.
Therefore, by \cite[Theorem 1.4 and Remark 1.5]{MR3890946}, we conclude that
$$
(v,\pi)\in C^1(\overline{\Omega})^d\times C(\overline{\Omega}),
$$
which gives \eqref{180213@eq1} if we choose the function $\eta$ such that $\eta\equiv 1$ on $B_{R/2}(x_0)$.

We now turn to the proof of \eqref{180304@eq1}.
By the Poincar\'e inequality \eqref{180213@eq2a} applied to the zero extension of $u$, we have
$$
\|u\|_{L^\infty(\Omega_{R/2}(x_0))}\le CR^{-d}\|u\|_{L^1(\Omega_{R/2}(x_0))}+CR\|Du\|_{L^\infty(\Omega_{R/2}(x_0))}.
$$
Thus, it suffices to prove that
\begin{equation}		\label{180304@eq1a}
\begin{aligned}
&\|Du\|_{L^\infty(\Omega_{R/2}(x_0))}+\|p\|_{L^\infty(\Omega_{R/2}(x_0))}\le CR^{-d-1}\|u\|_{L^1(\Omega_R(x_0))}\\
&\quad + C\big(R^{-d}\big(\|Du\|_{L^1(\Omega_R(x_0))}+\|p\|_{L^1(\Omega_R(x_0))}\big)+R\|f\|_{L^\infty(\Omega_R(x_0))}+|\ell|\big).
\end{aligned}
\end{equation}
Let $y\in \Omega_R(x_0)$ and $r\in (0, R]$ such that $r\le \min\{R_1, \operatorname{dist}(y, \partial B_R(x_0)\}$, where
$R_1=R_1(R_0,\varrho_0)\in (0,R_0/4)$ is the constant from \cite[Lemma 2.2]{MR3890946}.
We use the abbreviations
$$
B_r=B_r(y) \quad \text{and}\quad \Omega_r=\Omega_r(y).
$$
We fix $q>d$ and choose the function $\eta$ in \eqref{181008@eq5} satisfying
$$
0\le \eta\le 1, \quad \eta \equiv1 \,\text{ on }\, B_{r/2}, \quad \operatorname{supp}\eta\subset B_r, \quad r|\nabla \eta|+r^2|\nabla^2\eta|\le C(d).
$$
Note that
$$
\int_{B_r} h \chi_{\Omega_r}\,dx=\int_{\Omega}h\,dx=0.
$$
Hence, from the existence of solutions to the divergence
equation in a ball, there exist $\hat{h}_\alpha\in W^{1,q}_0(B_r)^d$, $\alpha\in \{1,\ldots,d\}$, such that
\begin{equation}		\label{180302@A1}
D_\alpha \hat{h}_\alpha=h \chi_{\Omega_r} \, \text{ in }\, B_r, \quad \|D \hat{h}_\alpha\|_{L^q(B_r)}\le C(d,q)\|h\|_{L^q(\Omega_r)}.
\end{equation}
We extend $\hat{h}_\alpha$ by zero on $\Omega\setminus B_r$ to see that  $(v, \pi)$ satisfies
$$
\left\{
\begin{aligned}
\cL v+\nabla \pi=D_\alpha (\hat{h}_\alpha+ h_\alpha)&\quad \text{in }\, \Omega,\\
\operatorname{div} v= g &\quad \text{in }\, \Omega,\\
v=0 &\quad \text{on }\, \partial \Omega.
\end{aligned}
\right.
$$
Since the coefficients and data of the above system are of Dini mean oscillation in $\Omega$,
we obtain by \cite[Eq. (2.27)]{MR3890946}  that
\begin{equation}		\label{180214@eq2a}
\begin{aligned}
&\|Dv\|_{L^\infty(\Omega_{r/2})}+\|\pi\|_{L^\infty(\Omega_{r/2})}\\
&\le C r^{-d}\big(\|Dv\|_{L^1(\Omega_r)}+\|\pi\|_{L^1(\Omega_r)}\big) +C\big(\|\hat{h}_\alpha+h_\alpha\|_{L^\infty(\Omega_r)}+\cH(r)\big),
\end{aligned}
\end{equation}
where $C=C(d,\lambda, \gamma, \omega_A,R_0,\varrho_0)$ and
$$
\cH(r)=\int_0^r \frac{\omega^\sharp_{\hat{h}_\alpha+h_\alpha}(t)+\omega^\sharp_g(t)}{t}\,dt.
$$
Here, we use the notation (see \cite[Section 2.1]{MR3890946})
\begin{align*}
&\omega^\sharp_{\bullet}(\rho)=\sup_{\rho\le \rho_0\le R_1} \left(\frac{\rho}{\rho_0}\right)^\gamma \tilde{\omega}_{\bullet}(\rho_0),\\
& \tilde{\omega}_{\bullet} (\rho)=\sum_{i=1}^\infty \kappa^{\gamma i}\big(\omega_{\bullet}(\kappa^{-i}\rho)[\kappa^{-i}\rho<1]+\omega_{\bullet}(1) [\kappa^{-i}\rho\ge 1]\big),\\
&\omega_{f_0}(\rho)=\sup_{x\in \overline{\Omega}}\dashint_{\Omega_\rho(x)}|f_0-(f_0)_{\Omega_\rho(x)}|\,dy \quad \text{for }\, f_0\in L^1(\Omega),
\end{align*}
where $\gamma\in (1-d/q,1)$ is a fixed constant and $\kappa=\kappa(d,\lambda, \gamma, R_0, \varrho)\in (0,1/8]$  is the constant from \cite[Lemma 2.3]{MR3890946}.
We extend $u$ by zero on $B_r\setminus \Omega$. Since
$D\eta u\in W^{1,q}_0(B_r)^d$,
by both Morrey and Poincar\'e inequalities with a scaling, we have
\begin{align*}
r^{-d/q+1}[D\eta u]_{C^{1-d/q}(\overline{\Omega})}+\|D \eta u\|_{L^\infty(\Omega)}&\le C r^{-d/q+1}\|D(D \eta u)\|_{L^q(B_r)}\\
&\le C r^{-d/q}\big( r^{-1} \|u\|_{L^q(B_r)}+\|Du\|_{L^q(\Omega_r)}\big),\\
&\le C \big(r^{-d-1}\|u\|_{L^1(\Omega_r)}+r^{-d/q}\|Du\|_{L^q(\Omega_r)}\big).
\end{align*}
Then for any $0<\rho\le R_1$, we obtain that
$$
\begin{aligned}
\omega_{h_\alpha}(\rho)&\le C \big( \rho^{1-d/q}[D\eta u]_{C^{1-d/q}(\overline{\Omega})}+\omega_A(\rho)\|D\eta u\|_{L^\infty(\Omega)}\big)\\
&\le C\bigg(\frac{\rho^{1-d/q}}{r^{1-d/q}}+\omega_A(\rho)\bigg)\big(r^{-d-1}\|u\|_{L^1(\Omega_r)}+r^{-d/q}\|Du\|_{L^q(\Omega_r)}\big).
\end{aligned}
$$
This together with $1-d/q<\gamma<1$ implies
\begin{equation}		\label{190308@eq1}
\omega_{h_\alpha}^\sharp(\rho)\le C\bigg(\frac{\rho^{1-d/q}}{r^{1-d/q}}+\omega_A^\sharp(\rho)\bigg)\big(r^{-d-1}\|u\|_{L^1(\Omega_r)}+r^{-d/q}\|Du\|_{L^q(\Omega_r)}\big),
\end{equation}
where $C=C(d,\lambda, \gamma, q)$.
Similarly, from the fact that (using \eqref{180302@A1})
\begin{align*}
[\hat{h}_{\alpha}]_{C^{1-d/q}(\overline{\Omega})}&\le C\|D\hat{h}_{\alpha}\|_{L^q(B_r)}\le C \|h\|_{L^q(\Omega_r)}\\
&\le C r^{-1} \big(\|Du\|_{L^q(\Omega_r)}+\|p\|_{L^q(\Omega_r)}\big)+Cr^{d/q} \|f\|_{L^\infty(\Omega_r)},
\end{align*}
we get
$$
\omega^\sharp_{\hat{h}_\alpha}(\rho) \le  C\bigg(\frac{\rho^{1-d/q}}{r^{1-d/q}}\bigg) \big(r^{-d/q} \big(\|Du\|_{L^q(\Omega_r)}+\|p\|_{L^q(\Omega_r)}\big)+r\|f\|_{L^\infty(\Omega_r)}\big).
$$
Combining the above inequality and \eqref{190308@eq1}, we have 
\begin{equation}		\label{180321@A1}
\begin{aligned}
&\cH(r)\le C r^{-d-1}\|u\|_{L^1(\Omega_r)}\\
&\quad+C r^{-d/q}\big(\|Du\|_{L^q(\Omega_r)}+\|p\|_{L^q(\Omega_r)}\big)+C\big(r\|f\|_{L^\infty(\Omega_r)}+|\ell|\big),
\end{aligned}
\end{equation}
where $C=C(d,\lambda,\gamma, q,\omega_A)$.
Therefore, it follows from \eqref{180214@eq2a} and \eqref{180321@A1} that
\begin{equation}		\label{180627@eq1}
\begin{aligned}
&\|Du\|_{L^\infty(\Omega_{r/2}(y))}+\|p\|_{L^\infty(\Omega_{r/2}(y))}\le C r^{-d-1}\|u\|_{L^1(\Omega_r(y))} \\
&\quad +C r^{-d/q} \big(\|Du\|_{L^q(\Omega_r(y))}+\|p\|_{L^q(\Omega_r(y))}\big)+C\big(r \|f\|_{L^\infty(\Omega_r(y))}+|\ell|\big)
\end{aligned}
\end{equation}
for any $y\in \Omega_{R}(x_0)$ and $0<r\le \min\{R_1, \operatorname{dist}(y, \partial B_R(x_0)\}$,
where the constant $C$ depends only on $d$, $\lambda$, $\omega_A$, $R_0$, and $\varrho_0$.

We now complete the proof of \eqref{180304@eq1a}.
Set $U=|Du|+|p|$ and let $R/2\le \rho<r\le R$ with $r-\rho\le R_1$.
Then for any $y\in \Omega_\rho(x_0)$, we obtain by \eqref{180627@eq1} that
$$
\begin{aligned}
&\|U\|_{L^\infty(\Omega_{(r-\rho)/2}(y))}\le C r^{-d-1}\|u\|_{L^1(\Omega_{r-\rho}(y))} \\
&\quad +C \big((r-\rho)^{-d/q} \|U\|_{L^q(\Omega_{r-\rho}(y))}+(r-\rho) \|f\|_{L^\infty(\Omega_{r-\rho}(y))}+|\ell|\big),
\end{aligned}
$$
and thus, we get from Young's inequality that
$$
\begin{aligned}
&\|U\|_{L^\infty(\Omega_{(r-\rho)/2}(y))}\le \delta \|U\|_{L^\infty(\Omega_{r-\rho}(y))}+C r^{-d-1}\|u\|_{L^1(\Omega_{r-\rho}(y))} \\
&\quad +C_\delta (r-\rho)^{-d} \|U\|_{L^1(\Omega_{r-\rho}(y))}+C\big((r-\rho) \|f\|_{L^\infty(\Omega_{r-\rho}(y))}+|\ell|\big)
\end{aligned}
$$
for any $\delta\in (0,1]$, where $C=C(d,\lambda, \omega_A, R_0, \varrho_0)$ and $C_\delta$ depends also on $\delta$.
Since $y$ is an arbitrary point in $\Omega_\rho(x_0)$ and $\Omega_{r-\rho}(y)\subset \Omega_r(x)$, we have
\begin{equation}		\label{180627@eq3}
\begin{aligned}
&\|U\|_{L^\infty(\Omega_{\rho}(x))}\le \delta \|U\|_{L^\infty(\Omega_{r}(x))}+C r^{-d-1}\|u\|_{L^1(\Omega_{r}(x))} \\
&\quad +C_\delta (r-\rho)^{-d} \|U\|_{L^1(\Omega_{r}(x))}+C\big((r-\rho) \|f\|_{L^\infty(\Omega_{r}(x))}+|\ell|\big)
\end{aligned}
\end{equation}
for any $R/2\le \rho < r\le R$ with $r-\rho\le R_1$.
Set
$$
r_k=R\bigg(1-\frac{1}{2^k}\bigg), \quad k\in \{1,2,\ldots\},
$$
and let $k_0$ be the smallest positive integer depending only on $R_1$, such that
$$
r_{k_0+1}-r_{k_0} \le \frac{1}{2^{k_0+1}}\le R_1.
$$
By \eqref{180627@eq3} we have for any $k\in \{k_0, k_0+1,\ldots,\}$,
$$
\begin{aligned}
&\|U\|_{L^\infty(\Omega_{r_k}(x))}
\le \delta \|U\|_{L^\infty(\Omega_{r_{k+1}}(x))}+ \frac{C 2^{dk}}{R^{d+1}}\|u\|_{L^1(\Omega_{r_{k+1}}(x))} \\
&\quad + \frac{C_\delta 2^{dk}}{ R^d} \|U\|_{L^1(\Omega_{r_{k+1}}(x))}+C\big(R \|f\|_{L^\infty(\Omega_{r_{k+1}}(x))}+|\ell|\big).
\end{aligned}
$$
By multiplying both sides of the above inequality by $\delta^k$ and summing the terms with respect to $k\in \{k_0,k_0+1,\ldots\}$, we see that
$$
\begin{aligned}
&\sum_{k=k_0}^\infty\delta^k\|U\|_{L^\infty(\Omega_{r_k}(x))}
\le \sum_{k=k_0+1}^\infty \delta^k \|U\|_{L^\infty(\Omega_{r_{k}}(x))}+ \frac{C}{R^{d+1}}\|u\|_{L^1(\Omega_{R}(x))} \sum_{k=k_0}^\infty (2^d \delta)^k \\
&\quad + \frac{C_\delta}{ R^d} \|U\|_{L^1(\Omega_{R}(x))}\sum_{k=k_0}^\infty (2^d\delta)^k+C\big(R \|f\|_{L^\infty(\Omega_{R}(x))}+|\ell|\big)\sum_{k=k_0}^\infty \delta^k,
\end{aligned}
$$
where each summation is finite upon choosing, for instance, $\delta=2^{-(d+1)}$.
Therefore, by subtracting
$$
\sum_{k=k_0+1}^\infty \delta^k \|U\|_{L^\infty(\Omega_{r_{k}}(x))}
$$
from both sides of the above inequality, we obtain
$$
\begin{aligned}
&\delta^{k_0}\|U\|_{L^\infty(\Omega_{R/2}(x))}
\le  \frac{C}{R^{d+1}}\|u\|_{L^1(\Omega_{R}(x))} \\
&\quad + \frac{C}{ R^d} \|U\|_{L^1(\Omega_{R}(x))}+C\big(R \|f\|_{L^\infty(\Omega_{R}(x))}+|\ell|\big),
\end{aligned}
$$
which implies \eqref{180304@eq1a}.
The lemma is proved.
\end{proof}

The following lemma is analogous to Lemma \ref{180208@lem1}.

\begin{lemma}		\label{180227@lem2}
Let $\Omega=\bR^d_+$, $x_0\in {\Omega}$, and $R\in (0,1]$.
Suppose that the coefficients $A^{\alpha\beta}$ of $\cL$ are of partially Dini mean oscillation with respect to $x'$ in $\bR^d_+$ satisfying Definition \ref{D3} with a Dini function $\omega=\omega_A$.
If $(u,p)\in W^{1,2}(\Omega_R(x_0))^d\times L^2(\Omega_R(x_0))$ satisfies
$$
\left\{
\begin{aligned}
\cL u+\nabla p=f &\quad \text{in }\, \Omega_R(x_0),\\
\operatorname{div} u=0 &\quad \text{in }\, \Omega_R(x_0),\\
u=0 &\quad \text{on }\, B_R(x_0)\cap \partial \Omega,
\end{aligned}
\right.
$$
where $f\in L^\infty(\Omega_R(x_0))^d$, then we have
$$
(u,p)\in W^{1,\infty}(\Omega_{R/2}(x_0))^d\times L^\infty(\Omega_{R/2}(x_0))
$$
with the estimates
\begin{equation}		\label{180327@A1}
\begin{aligned}
&\|Du\|_{L^\infty(\Omega_{R/2}(x_0))}+\|p\|_{L^\infty(\Omega_{R/2}(x_0))}\\
&\le CR^{-d}\big(\|Du\|_{L^1(\Omega_R(x_0))}+\|p\|_{L^1(\Omega_R(x_0))}\big)+CR\|f\|_{L^\infty(\Omega_R(x_0))},
\end{aligned}
\end{equation}
and
$$
\begin{aligned}
&\|u\|_{L^\infty(\Omega_{R/2}(x_0))} \le CR^{-d}\|u\|_{L^1(\Omega_{R/2}(x_0))}\\
&\quad +CR^{1-d}\big(\|Du\|_{L^1(\Omega_R(x_0))}+\|p\|_{L^1(\Omega_R(x_0))}\big)+CR^2\|f\|_{L^\infty(\Omega_R(x_0))},
\end{aligned}
$$
where $C=C(d,\lambda, \omega_A)$.
\end{lemma}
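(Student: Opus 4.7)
The plan is to mirror the proof of Lemma \ref{180208@lem1}, substituting the $C^{1,\rm{Dini}}$ geometry and full Dini mean oscillation of the coefficients there with (i) the flat boundary of $\bR^d_+$ and (ii) partially Dini mean oscillation with respect to $x'$. The decisive point is that in $\bR^d_+$ the measurable direction $x_1$ coincides with the normal to the boundary, so a boundary $W^{1,\infty}$-type regularity is available in exactly the same form as the interior case handled in Lemma \ref{180108@lem3}; since $u$ vanishes on $B_R(x_0)\cap \partial\bR^d_+$, no additional boundary regularity of $\Omega$ is required.

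The key brick I need is a boundary-up-to-$\partial\bR^d_+$ analog of the interior estimate used in Lemma \ref{180108@lem3}: for a Dirichlet Stokes problem on $B_6\cap \bR^d_+$ with right-hand side $D_\alpha F_\alpha$ and divergence-free datum, one expects
\begin{equation*}
\|Dv\|_{L^\infty(B_2\cap \bR^d_+)}+\|\pi\|_{L^\infty(B_2\cap \bR^d_+)}\le C\bigl(\|Dv\|_{L^1}+\|\pi\|_{L^1}+\|F_1\|_{L^\infty}\bigr)+C\int_0^1 \frac{\tilde\omega_{F_\alpha,x'}(t)}{t}\,dt,
\end{equation*}
which is precisely the boundary counterpart of the argument of \cite{MR3912724} adapted to $\bR^d_+$ when the measurable direction is $x_1$. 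The passage from the partially Dini modulus $\omega_A$ to a Dini control of the $\tilde\omega_{\hat A^{\alpha\beta},y'}$ goes through verbatim as in the computation leading to \eqref{180228@eq6}, because flat translations parallel to $\partial \bR^d_+$ preserve partially Dini mean oscillation with respect to $x'$.

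With this ingredient in hand, I would localize as in Lemma \ref{180208@lem1}: fix $y\in \overline{\Omega_R(x_0)}$ and $r\in(0,R]$ with $r\le \operatorname{dist}(y,\partial B_R(x_0))$, choose a cutoff $\eta$ equal to $1$ on $B_{r/2}(y)$ and supported in $B_r(y)$, and consider $(v,\pi)=(\eta u,\eta p-(\eta p)_{\bR^d_+})$. This satisfies a Dirichlet Stokes system on $\bR^d_+$ whose right-hand side splits into the cutoff-generated terms $h_\alpha$ and $g$ together with an $L^q$ term $h$ of mean zero on $B_r(y)$; solving the divergence equation in $B_r(y)$ absorbs $h$ into a divergence $D_\alpha\hat h_\alpha$, and applying the boundary estimate above plus scaling yields
\begin{equation*}
\|Du\|_{L^\infty(\Omega_{r/2}(y))}+\|p\|_{L^\infty(\Omega_{r/2}(y))}\le Cr^{-d/q}\bigl(\|Du\|_{L^q(\Omega_r(y))}+\|p\|_{L^q(\Omega_r(y))}\bigr)+Cr\|f\|_{L^\infty(\Omega_r(y))}
\end{equation*}
for some $q>d$. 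Interior points are treated by Lemma \ref{180108@lem3}. The geometric-series iteration displayed in \eqref{180627@eq3} then absorbs the $L^q$ norm and delivers \eqref{180327@A1}. The $L^\infty$-estimate on $u$ itself follows by zero-extension across $\partial\bR^d_+\cap B_R(x_0)$ (legitimate since $u=0$ there) and the Poincar\'e inequality
\[
\|u\|_{L^\infty(\Omega_{R/2}(x_0))}\le CR^{-d}\|u\|_{L^1(\Omega_{R/2}(x_0))}+CR\|Du\|_{L^\infty(\Omega_{R/2}(x_0))},
\]
combined with the gradient bound already established. The only nontrivial step is the boundary $W^{1,\infty}$ estimate flagged above; I expect this to be available directly from \cite{MR3912724} (in the same way Lemma \ref{180108@lem3} draws on it in the interior), as this geometry is the natural boundary setting for partially Dini coefficients.
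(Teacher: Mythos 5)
Your identification of the crucial ingredient is correct: the estimate \eqref{180327@A1} hinges on a boundary $W^{1,\infty}$-estimate for Stokes systems in a half-ball $B_6^+$ with coefficients of partially Dini mean oscillation with respect to $x'$, which is exactly \cite[Eq.~(7.6)]{MR3912724} (the half-space analog of \cite[Eq.~(4.16)]{MR3912724} used in Lemma \ref{180108@lem3}). However, you then import the cutoff-plus-iteration machinery of Lemma \ref{180208@lem1}, and this is not what the paper does, nor is it needed here. That machinery is forced in Lemma \ref{180208@lem1} because the $C^{1,\rm{Dini}}$ boundary is curved: the brick from \cite{MR3890946} is formulated on $\Omega_r(y)$, the cutoff is what permits applying it, and the cutoff injects new data $h_\alpha$, $g$ whose Dini moduli are controlled only through $\|Du\|_{L^q}$ via Morrey; iterating then absorbs that $L^q$ norm. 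None of this is necessary over the flat boundary $\partial\bR^d_+$, because the brick from \cite{MR3912724} is already stated for a half-ball $B_6^+$ with zero boundary data on the flat part, and already has $L^1$ norms of $Du$, $p$ on the right-hand side.

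The paper's proof instead mirrors the interior Lemma \ref{180108@lem3}: it splits $\Omega_{R/2}(x_0)$ into points far from $\partial\bR^d_+$ (handled by Lemma \ref{180108@lem3}) and points within distance $R/18$ of $\partial\bR^d_+$; for the latter it translates so the near-boundary point has $z'=0'$, converts $f$ to $D_\alpha f_\alpha$ via the divergence equation on a half-ball with the Morrey-type bound \eqref{180228@eq2a}, rescales to $B_6^+$, applies \cite[Eq.~(7.6)]{MR3912724} directly, and then checks (exactly as in your second paragraph) that the rescaled partially Dini modulus of the coefficients is dominated by a Dini function derived from $\omega_A$. No cutoff, no bootstrap to get preliminary $W^{1,q}$ regularity of $u$ up to the boundary, and no geometric-series iteration. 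Your route should in principle close (it requires additionally the boundary $W^{1,q}$-regularity from, e.g., \cite{MR3758532} or \cite{arXiv:1604.02690} to get the Hölder continuity of $u$ that feeds the Dini moduli of the cutoff data), but it spends considerable effort re-deriving what the half-space brick already provides in one stroke. The main lesson is that for a flat boundary aligned with the measurable direction, the boundary estimate from \cite{MR3912724} is a self-contained local estimate on half-balls, so the localization strategy of Lemma \ref{180208@lem1} can be bypassed entirely.
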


\begin{proof}
With a standard covering argument, we only need to prove the desired estimates with $R/36$ in place of $R/2$ on the left-hand sides.
In this proof, we denote $B_r^+(x)=\Omega_r(x)$ and $B_r^+=B_r^+(0)$ for any $x\in \bR^d$ and  $r>0$.
By the Poincar\'e inequality \eqref{180213@eq2a} with the zero extension of $u$, we only need to prove \eqref{180327@A1}.
Let $z=(z_1,z')\in B_{R/2}^+(x_0)$.
We consider the following two cases:
$$
\operatorname{dist}(z, \partial \bR^d_+)>R/18, \quad \operatorname{dist}(z, \partial \bR^d_+)\le R/18.
$$
\begin{enumerate}[i.]
\item
$\operatorname{dist}(z, \partial \bR^d_+)>R/18$:
In this case, since it holds that
$$
B_{R/18}^+(z)=B_{R/18}(z)\subset B_R^+(x_0),
$$
we get from Lemma \ref{180108@lem3} that
\begin{equation}	\label{180327@A2}
\begin{aligned}
&\|Du\|_{L^\infty(B_{R/36}^+(z))}+\|p\|_{L^\infty(B_{R/36}^+(z))}\\
&\le CR^{-d}\Big(\|Du\|_{L^1(B_R^+(x_0))}+\|p\|_{L^1(B_R^+(x_0))}\Big)+CR\|f\|_{L^\infty(B_R^+(x_0))},
\end{aligned}
\end{equation}
where $C=C(d,\lambda, \omega_A)$.

\item
$\operatorname{dist}(z, \partial \bR^d_+)\le R/18$:
Without loss of generality, we assume that $z'=0'$.
To complete the proof of the lemma, it suffices to prove that
\begin{equation}		\label{180228@eq1b}
\begin{aligned}
&\|Du\|_{L^\infty(B_{R/12}^+)}+\|p\|_{L^\infty(B_{R/12}^+)}\\
&\le CR^{-d}\Big(\|Du\|_{L^1(B_{R/4}^+)}+\|p\|_{L^1(B_{R/4}^+)}\Big)+CR\|f\|_{L^\infty(B_{R/4}^+)},
\end{aligned}
\end{equation}
where $C=C(d,\lambda, \omega_A)$,
because \eqref{180327@A1} follows from \eqref{180327@A2},  \eqref{180228@eq1b}, and the fact that
$$
B_{R/36}^+(z)\subset B_{R/12}^+\subset B_{R/4}^+\subset B_{R}^+(x_0).
$$
Let us set $R_1=R/4$ and fix $q>d$.
By the same reasoning as in \eqref{180228@eq2}, $(u, p)$ satisfies
$$
\left\{
\begin{aligned}
\cL u+\nabla p=D_\alpha f_\alpha &\quad \text{in }\, B_{R_1}^+,\\
\operatorname{div} u=0 &\quad \text{in }\, B_{R_1}^+,\\
u=0 &\quad \text{on }\, B_{R_1}\cap \partial \bR^d_+,
\end{aligned}
\right.
$$
where $f_\alpha\in \tilde{W}^{1,q}(B_{R_1}^+)^d$ satisfy
\begin{equation}		\label{180228@eq2a}
\|f_\alpha\|_{L^\infty(B_{R_1}^+)}+R_1^{1-d/q}[f_\alpha]_{C^{1-d/q}(B_{R_1}^+)}\le C(d,q)R_1 \|f\|_{L^\infty(B_{R_1}^+)}.
\end{equation}
We denote
$$
\hat{u}(y)=u(x), \quad \hat{p}(y)=p(x), \quad \hat{A}^{\alpha\beta}(y)=A^{\alpha\beta}(x), \quad \hat{f}_\alpha(y)=f_\alpha (x),
$$
where $y=6x/R_1$, and observe that
\begin{equation}		\label{180228@eq3}
\left\{
\begin{aligned}
D_\alpha(\hat{A}^{\alpha\beta}D_\beta \hat{u})+\nabla (R_1\hat{p}/6)=D_\alpha (R_1\hat{f}_\alpha/6) &\quad \text{in }\, B_6^+,\\
\operatorname{div} \hat{u}=0 &\quad \text{in }\, B_6^+,\\
\hat{u}=0 &\quad \text{on }\, B_6 \cap \partial \bR^d_+.
\end{aligned}
\right.
\end{equation}
Fix a constant $\gamma$ satisfying $1-d/q<\gamma<1$, and let $\kappa=\kappa(d,\lambda,\gamma)\in (0,1/2]$ be the constant from \cite[Lemma 7.1]{MR3912724}.
For $r\in (0,1]$ and $h\in L^1(B_6^+)$, we denote
$$
\begin{aligned}
&\omega_{h,y'}(r)=\sup_{y_0\in B_3^+} \dashint_{B_r^+(y_0)}\bigg| h(y)-\dashint_{B_r'(y_0')}h(y_1,z')\,dz'\bigg|\,dy,\\
&\tilde{\omega}_{h,y'}(r)=\sum_{i=1}^\infty \kappa^{\gamma i}\big(\omega_{h,y'}(\kappa^{-i}r)[\kappa^i r<1]+\omega_{h,y'}(1)[\kappa^{-i}r\ge 1]\big),\\
&\omega^\sharp_{h,y'}(r)=\sup_{r\le r_0\le 1}\bigg(\frac{r}{r_0}\bigg)^{\gamma} \tilde{\omega}_{h,y'}(r_0).
\end{aligned}
$$
By \eqref{180228@eq2a}, we obtain that (using $1-d/q<\gamma<1$)
\begin{equation}		\label{180228@eq5}
R_1\|\hat{f}_1\|_{L^\infty(B_6^+)}+\int_0^1 \frac{\omega^\sharp_{R_1 \hat{f}_\alpha,y'}(t)}{t}\,dt\le CR_1^2 \|f\|_{L^\infty(B^+_{R_1})},
\end{equation}
where $C=C(d,q,\gamma)$.
Since $(\hat{u}, \hat{p})$ satisfies \eqref{180228@eq3}, we obtain by \cite[Eq. (7.6)]{MR3912724} that
$$
\begin{aligned}
&\|D\hat{u}\|_{L^\infty(B_2^+)}+R_1\|\hat{p}\|_{L^\infty(B_2^+)} \\
&\le C\bigg( \|D\hat{u}\|_{L^1(B_6^+)}+R_1\|\hat{p}\|_{L^1(B_6^+)} +R_1\|\hat{f}_1\|_{L^\infty(B_6^+)}+\int_0^1 \frac{\omega^\sharp_{R_1 \hat{f}_\alpha,y'}(t)}{t}\,dt\bigg),
\end{aligned}
$$
where $C=C(d,\lambda,\omega_0, q,\gamma)$ and $\omega_0:(0,1]\to [0,\infty)$ is a function such that
$$
\int_0^r \frac{\omega^\sharp_{\hat{A}^{\alpha\beta},y'}(t)}{t}\,dt \le \int_0^r \frac{\omega_0(t)}{t}\,dt<\infty \quad \text{for all }\, r\in (0,1].
$$
Therefore, by \eqref{180228@eq5} and the change of variables, we get
$$
\begin{aligned}
&\|Du\|_{L^\infty (B_{R_1/3}^+)}+\|p\|_{L^\infty(B_{R_1/3}^+)}\\
&\le  CR_1^{-d}\Big(\|Du\|_{L^1(B_{R_1}^+)}+\|p\|_{L^1(B_{R_1}^+)}\Big) +CR_1 \|f\|_{L^\infty(B_{R_1}^+)},
\end{aligned}
$$
where $C=C(d,\lambda, \omega_0,q,\gamma)=C(d,\lambda, \omega_0)$.

To complete the proof of \eqref{180228@eq1b}, it remains to show that $\omega_0$ can be derived from $\omega_A$.
We set
$$
\begin{aligned}
&\tilde{\omega}_{A}(r)=\sum_{i=1}^\infty \kappa^{\gamma i}\big(\omega_{A}(\kappa^{-i}r)[\kappa^i r<1]+\omega_{A}(1)[\kappa^{-i}r\ge 1]\big),\\
&\omega^\sharp_{A}(r)=\sup_{r\le r_0\le 1}\bigg(\frac{r}{r_0}\bigg)^{\gamma} \tilde{\omega}_{A}(r_0),
\end{aligned}
$$
and observe that (see \eqref{180228@eq6})
$$
\tilde{\omega}_{\hat{A}^{\alpha\beta},y'}(r)\le \tilde{\omega}_A(rR_1/6)+C_0r^{\gamma} \int_0^1 \frac{\omega_A(t)}{t}\,dt,
$$
where $C=C(d,\lambda, \omega_A)$.
From the above inequality it follows that
\begin{align*}
\omega^\sharp_{\hat{A}^{\alpha\beta},y'}(r)\le \omega^\sharp_A(rR_1/6)+C_0 r^{\gamma}\int_0^1 \frac{\omega_A(t)}{t}\,dt.
\end{align*}
Therefore, for any $r\in (0,1]$, we have
\begin{align*}
\int_0^r \frac{\omega^\sharp_{\hat{A}^{\alpha\beta},y'}(t)}{t}\,dt
&\le \int_0^r \frac{\omega^\sharp_A(tR_1/6)}{t}\,dt+C_0\int_0^r \frac{t^{\gamma}}{t}\,dt\cdot \int_0^1 \frac{\omega_A(t)}{t}\,dt\\
&\le \int_0^r \frac{\omega_0(t)}{t}\,dt<\infty,
\end{align*}
where we set
$$
\omega_0(r)=\omega_A^\sharp(r)+C_0r^{\gamma}\int_0^1 \frac{\omega_A(t)}{t}\,dt.
$$
\end{enumerate}
The lemma is proved.
\end{proof}

\bibliographystyle{plain}

\end{document}